\numberwithin{equation}{section}
\Crefname{lemma}{Lemma}{Lemmas}
\Crefname{conjecture}{Conjecture}{Conjectures}
\newtheorem{lemma}{Lemma}
\newtheorem{corollary}{Corollary}
\newtheorem{theorem}{Theorem}
\newtheorem{conjecture}{Conjecture}
\theoremstyle{remark}
\newtheorem*{remark}{Remark}
\newcommand{\R}{\mathcal{R}}
\newcommand{\x}{\mathbf{x}}
\newcommand{\y}{\mathbf{y}}
\newcommand{\T}{\mathbf{T}}
\newcommand{\olx}{\overline{x}}
\newcommand{\oly}{\overline{y}}
\newcommand{\olxy}{\overline{xy}}
\newcommand{\qpcorner}{{\mathrel{\tikz{\draw (0ex,1ex) -- (0ex,0ex) -- (1ex,0ex) (-0.2ex,0ex)}}}}
\DeclareRobustCommand{\gpfourdf}{\begin{tikzpicture}[scale=0.6]
  \node [draw, circle, fill, inner sep=2pt] at (1,0) {};
\node [draw, circle, fill, inner sep=2pt] at (0,1) {};
\node [draw, circle, fill, inner sep=2pt] at (-1,0) {};
\node [draw, circle, fill, inner sep=2pt] at (0,-1) {};
\draw [line width=1.5pt] (1,0) -- (2,0);
\draw [line width=1.5pt] (1,0) -- (1,1);
\draw [line width=1.5pt] (0,1) -- (0,2);
\draw [line width=1.5pt] (0,1) -- (-1,1);
\draw [line width=1.5pt] (-1,0) -- (-2,0);
\draw [line width=1.5pt] (-1,0) -- (-1,-1);
\draw [line width=1.5pt] (0,-1) -- (0,-2);
\draw [line width=1.5pt] (0,-1) -- (1,-1);
\node at (0,2.1) {};
\node at (0,-2.1) {};
\node at (2.1,0) {};
\node at (-2.1,0) {};
\end{tikzpicture}}
\DeclareRobustCommand{\gpfournondf}{\begin{tikzpicture}[scale=0.6]
  \node [draw, circle, fill, inner sep=2pt] at (1,0) {};
\node [draw, circle, fill, inner sep=2pt] at (0,1) {};
\node [draw, circle, fill, inner sep=2pt] at (-1,0) {};
\node [draw, circle, fill, inner sep=2pt] at (0,-1) {};
\draw [line width=1.5pt] (1,0) -- (2,0);
\draw [line width=1.5pt] (1,0) -- (1,1);
\draw [line width=1.5pt] (1,0) -- (0,0);
\draw [line width=1.5pt] (0,1) -- (0,2);
\draw [line width=1.5pt] (0,1) -- (-1,1);
\draw [line width=1.5pt] (0,1) -- (0,0);
\draw [line width=1.5pt] (-1,0) -- (-2,0);
\draw [line width=1.5pt] (-1,0) -- (-1,-1);
\draw [line width=1.5pt] (-1,0) -- (0,0);
\draw [line width=1.5pt] (0,-1) -- (0,-2);
\draw [line width=1.5pt] (0,-1) -- (1,-1);
\draw [line width=1.5pt] (0,-1) -- (0,0);
\node at (0,2.1) {};
\node at (0,-2.1) {};
\node at (2.1,0) {};
\node at (-2.1,0) {};
\end{tikzpicture}}
\DeclareRobustCommand{\gpsixalg}{\begin{tikzpicture}[scale=0.6]
\node [draw, circle, fill, inner sep=2pt] at (1,0) {};
\node [draw, circle, fill, inner sep=2pt] at (0,1) {};
\node [draw, circle, fill, inner sep=2pt] at (-1,0) {};
\node [draw, circle, fill, inner sep=2pt] at (0,-1) {};
\draw [line width=1.5pt] (1,0) -- (1,1);
\draw [line width=1.5pt] (1,0) -- (0,0);
\draw [line width=1.5pt] (0,1) -- (1,1);
\draw [line width=1.5pt] (0,1) -- (0,0);
\draw [line width=1.5pt] (-1,0) -- (-2,0);
\draw [line width=1.5pt] (-1,0) -- (-1,-1);
\draw [line width=1.5pt] (-1,0) -- (0,0);
\draw [line width=1.5pt] (-1,0) -- (-1,1);
\draw [line width=1.5pt] (0,-1) -- (0,-2);
\draw [line width=1.5pt] (0,-1) -- (1,-1);
\draw [line width=1.5pt] (0,-1) -- (0,0);
\draw [line width=1.5pt] (0,-1) -- (-1,-1);
\node at (0,2.1) {};
\node at (0,-2.1) {};
\node at (2.1,0) {};
\node at (-2.1,0) {};
    \end{tikzpicture}}
\DeclareRobustCommand{\gpinfdf}{\begin{tikzpicture}[scale=0.6]
     \node [draw, circle, fill, inner sep=2pt] at (1,0) {};
\node [draw, circle, fill, inner sep=2pt] at (0,1) {};
\node [draw, circle, fill, inner sep=2pt] at (-1,0) {};
\node [draw, circle, fill, inner sep=2pt] at (0,-1) {};
\draw [line width=1.5pt] (1,0) -- (1,-1);
\draw [line width=1.5pt] (0,1) -- (0,2);
\draw [line width=1.5pt] (0,1) -- (-1,1);
\draw [line width=1.5pt] (0,1) -- (0,0);
\draw [line width=1.5pt] (-1,0) -- (-1,1);
\draw [line width=1.5pt] (0,-1) -- (1,-1);
\draw [line width=1.5pt] (0,-1) -- (0,0);
\draw [line width=1.5pt] (0,-1) -- (0,-2);
\node at (0,2.1) {};
\node at (0,-2.1) {};
\node at (2.1,0) {};
\node at (-2.1,0) {};
    \end{tikzpicture}}
\DeclareRobustCommand{\gpinfalg}{\begin{tikzpicture}[scale=0.6]
      \node [draw, circle, fill, inner sep=2pt] at (1,0) {};
\node [draw, circle, fill, inner sep=2pt] at (0,1) {};
\node [draw, circle, fill, inner sep=2pt] at (-1,0) {};
\node [draw, circle, fill, inner sep=2pt] at (0,-1) {};
\draw [line width=1.5pt] (1,0) -- (0,0);
\draw [line width=1.5pt] (1,0) -- (1,-1);
\draw [line width=1.5pt] (0,1) -- (1,1);
\draw [line width=1.5pt] (0,1) -- (-1,1);
\draw [line width=1.5pt] (-1,0) -- (0,0);
\draw [line width=1.5pt] (-1,0) -- (-1,1);
\draw [line width=1.5pt] (-1,0) -- (-2,0);
\draw [line width=1.5pt] (0,-1) -- (1,-1);
\draw [line width=1.5pt] (0,-1) -- (0,0);
\draw [line width=1.5pt] (0,-1) -- (-1,-1);
\draw [line width=1.5pt] (0,-1) -- (0,-2);
\node at (0,2.1) {};
\node at (0,-2.1) {};
\node at (2.1,0) {};
\node at (-2.1,0) {};
    \end{tikzpicture}}
\DeclareRobustCommand{\gpsixdf}{\begin{tikzpicture}[scale=0.6]
\node [draw, circle, fill, inner sep=2pt] at (1,0) {};
\node [draw, circle, fill, inner sep=2pt] at (0,1) {};
\node [draw, circle, fill, inner sep=2pt] at (-1,0) {};
\node [draw, circle, fill, inner sep=2pt] at (0,-1) {};
\draw [line width=1.5pt] (1,0) -- (1,-1);
\draw [line width=1.5pt] (0,1) -- (0,2);
\draw [line width=1.5pt] (0,1) -- (0,0);
\draw [line width=1.5pt] (-1,0) -- (-1,1);
\draw [line width=1.5pt] (-1,0) -- (-1,-1);
\draw [line width=1.5pt] (0,-1) -- (1,-1);
\draw [line width=1.5pt] (0,-1) -- (-1,-1);
\node at (0,2.1) {};
\node at (0,-2.1) {};
\node at (2.1,0) {};
\node at (-2.1,0) {};
\end{tikzpicture}}
\DeclareRobustCommand{\gpsixnondf}{\begin{tikzpicture}[scale=0.6]
\node [draw, circle, fill, inner sep=2pt] at (1,0) {};
\node [draw, circle, fill, inner sep=2pt] at (0,1) {};
\node [draw, circle, fill, inner sep=2pt] at (-1,0) {};
\node [draw, circle, fill, inner sep=2pt] at (0,-1) {};
\draw [line width=1.5pt] (1,0) -- (2,0);
\draw [line width=1.5pt] (1,0) -- (1,-1);
\draw [line width=1.5pt] (0,1) -- (1,1);
\draw [line width=1.5pt] (0,1) -- (0,2);
\draw [line width=1.5pt] (-1,0) -- (-2,0);
\draw [line width=1.5pt] (-1,0) -- (-1,-1);
\draw [line width=1.5pt] (0,-1) -- (0,0);
\draw [line width=1.5pt] (0,-1) -- (-1,-1);
\draw [line width=1.5pt] (0,-1) -- (0,-2);
\node at (0,2.1) {};
\node at (0,-2.1) {};
\node at (2.1,0) {};
\node at (-2.1,0) {};
\end{tikzpicture}}
\DeclareRobustCommand{\gpeightdf}{\begin{tikzpicture}[scale=0.6]
\node [draw, circle, fill, inner sep=2pt] at (1,0) {};
\node [draw, circle, fill, inner sep=2pt] at (0,1) {};
\node [draw, circle, fill, inner sep=2pt] at (-1,0) {};
\node [draw, circle, fill, inner sep=2pt] at (0,-1) {};
\draw [line width=1.5pt] (1,0) -- (1,-1);
\draw [line width=1.5pt] (0,1) -- (0,2);
\draw [line width=1.5pt] (0,1) -- (0,0);
\draw [line width=1.5pt] (-1,0) -- (-1,1);
\draw [line width=1.5pt] (0,-1) -- (1,-1);
\draw [line width=1.5pt] (0,-1) -- (-1,-1);
\node at (0,2.1) {};
\node at (0,-2.1) {};
\node at (2.1,0) {};
\node at (-2.1,0) {};
\end{tikzpicture}}
\DeclareRobustCommand{\gpeightnondf}{\begin{tikzpicture}[scale=0.6]
\node [draw, circle, fill, inner sep=2pt] at (1,0) {};
\node [draw, circle, fill, inner sep=2pt] at (0,1) {};
\node [draw, circle, fill, inner sep=2pt] at (-1,0) {};
\node [draw, circle, fill, inner sep=2pt] at (0,-1) {};
\draw [line width=1.5pt] (1,0) -- (2,0);
\draw [line width=1.5pt] (1,0) -- (1,1);
\draw [line width=1.5pt] (0,1) -- (1,1);
\draw [line width=1.5pt] (0,1) -- (0,2);
\draw [line width=1.5pt] (0,1) -- (-1,1);
\draw [line width=1.5pt] (0,1) -- (0,0);
\draw [line width=1.5pt] (-1,0) -- (-1,1);
\draw [line width=1.5pt] (-1,0) -- (-2,0);
\draw [line width=1.5pt] (-1,0) -- (-1,-1);
\draw [line width=1.5pt] (0,-1) -- (0,0);
\draw [line width=1.5pt] (0,-1) -- (-1,-1);
\node at (0,2.1) {};
\node at (0,-2.1) {};
\node at (2.1,0) {};
\node at (-2.1,0) {};
\end{tikzpicture}}
\DeclareRobustCommand{\gptendf}{\begin{tikzpicture}[scale=0.6]
\node [draw, circle, fill, inner sep=2pt] at (1,0) {};
\node [draw, circle, fill, inner sep=2pt] at (0,1) {};
\node [draw, circle, fill, inner sep=2pt] at (-1,0) {};
\node [draw, circle, fill, inner sep=2pt] at (0,-1) {};
\draw [line width=1.5pt] (1,0) -- (1,-1);
\draw [line width=1.5pt] (0,1) -- (1,1);
\draw [line width=1.5pt] (0,1) -- (0,2);
\draw [line width=1.5pt] (0,1) -- (-1,1);
\draw [line width=1.5pt] (-1,0) -- (-1,1);
\draw [line width=1.5pt] (-1,0) -- (-2,0);
\draw [line width=1.5pt] (0,-1) -- (1,-1);
\draw [line width=1.5pt] (0,-1) -- (-1,-1);
\node at (0,2.1) {};
\node at (0,-2.1) {};
\node at (2.1,0) {};
\node at (-2.1,0) {};
\end{tikzpicture}}
\DeclareRobustCommand{\gptennondf}{\begin{tikzpicture}[scale=0.6]
\node [draw, circle, fill, inner sep=2pt] at (1,0) {};
\node [draw, circle, fill, inner sep=2pt] at (0,1) {};
\node [draw, circle, fill, inner sep=2pt] at (-1,0) {};
\node [draw, circle, fill, inner sep=2pt] at (0,-1) {};
\draw [line width=1.5pt] (1,0) -- (1,1);
\draw [line width=1.5pt] (1,0) -- (1,-1);
\draw [line width=1.5pt] (0,1) -- (1,1);
\draw [line width=1.5pt] (0,1) -- (0,0);
\draw [line width=1.5pt] (-1,0) -- (0,0);
\draw [line width=1.5pt] (-1,0) -- (-1,1);
\draw [line width=1.5pt] (-1,0) -- (-1,-1);
\draw [line width=1.5pt] (0,-1) -- (-1,-1);
\node at (0,2.1) {};
\node at (0,-2.1) {};
\node at (2.1,0) {};
\node at (-2.1,0) {};
\end{tikzpicture}}
\DeclareRobustCommand{\gptwelvenondf}{\begin{tikzpicture}[scale=0.6]
\node [draw, circle, fill, inner sep=2pt] at (1,0) {};
\node [draw, circle, fill, inner sep=2pt] at (0,1) {};
\node [draw, circle, fill, inner sep=2pt] at (-1,0) {};
\node [draw, circle, fill, inner sep=2pt] at (0,-1) {};
\draw [line width=1.5pt] (1,0) -- (1,1);
\draw [line width=1.5pt] (0,1) -- (1,1);
\draw [line width=1.5pt] (0,1) -- (0,0);
\draw [line width=1.5pt] (-1,0) -- (-2,0);
\draw [line width=1.5pt] (-1,0) -- (-1,-1);
\draw [line width=1.5pt] (0,-1) -- (1,-1);
\draw [line width=1.5pt] (0,-1) -- (-1,-1);
\node at (0,2.1) {};
\node at (0,-2.1) {};
\node at (2.1,0) {};
\node at (-2.1,0) {};
\end{tikzpicture}}
\DeclareRobustCommand{\gpinfnondf}{\begin{tikzpicture}[scale=0.6]
\node [draw, circle, fill, inner sep=2pt] at (1,0) {};
\node [draw, circle, fill, inner sep=2pt] at (0,1) {};
\node [draw, circle, fill, inner sep=2pt] at (-1,0) {};
\node [draw, circle, fill, inner sep=2pt] at (0,-1) {};
\draw [line width=1.5pt] (1,0) -- (1,-1);
\draw [line width=1.5pt] (0,1) -- (0,2);
\draw [line width=1.5pt] (0,1) -- (0,0);
\draw [line width=1.5pt] (-1,0) -- (-1,1);
\draw [line width=1.5pt] (0,-1) -- (1,-1);
\draw [line width=1.5pt] (0,-1) -- (-1,-1);
\draw [line width=1.5pt] (0,-1) -- (0,-2);
\node at (0,2.1) {};
\node at (0,-2.1) {};
\node at (2.1,0) {};
\node at (-2.1,0) {};
\end{tikzpicture}}
\DeclareRobustCommand{\gpeightalg}{\begin{tikzpicture}[scale=0.6]
\node [draw, circle, fill, inner sep=2pt] at (1,0) {};
\node [draw, circle, fill, inner sep=2pt] at (0,1) {};
\node [draw, circle, fill, inner sep=2pt] at (-1,0) {};
\node [draw, circle, fill, inner sep=2pt] at (0,-1) {};
\draw [line width=1.5pt] (1,0) -- (1,1);
\draw [line width=1.5pt] (1,0) -- (0,0);
\draw [line width=1.5pt] (0,1) -- (1,1);
\draw [line width=1.5pt] (0,1) -- (0,2);
\draw [line width=1.5pt] (0,1) -- (-1,1);
\draw [line width=1.5pt] (-1,0) -- (0,0);
\draw [line width=1.5pt] (-1,0) -- (-1,-1);
\draw [line width=1.5pt] (0,-1) -- (1,-1);
\draw [line width=1.5pt] (0,-1) -- (-1,-1);
\draw [line width=1.5pt] (0,-1) -- (0,-2);
\node at (0,2.1) {};
\node at (0,-2.1) {};
\node at (2.1,0) {};
\node at (-2.1,0) {};
\end{tikzpicture}}
\DeclareRobustCommand{\gptenalg}{\begin{tikzpicture}[scale=0.6]
\node [draw, circle, fill, inner sep=2pt] at (1,0) {};
\node [draw, circle, fill, inner sep=2pt] at (0,1) {};
\node [draw, circle, fill, inner sep=2pt] at (-1,0) {};
\node [draw, circle, fill, inner sep=2pt] at (0,-1) {};
\draw [line width=1.5pt] (1,0) -- (0,0);
\draw [line width=1.5pt] (1,0) -- (1,-1);
\draw [line width=1.5pt] (0,1) -- (1,1);
\draw [line width=1.5pt] (-1,0) -- (-1,1);
\draw [line width=1.5pt] (-1,0) -- (-2,0);
\draw [line width=1.5pt] (0,-1) -- (0,0);
\draw [line width=1.5pt] (0,-1) -- (-1,-1);
\draw [line width=1.5pt] (0,-1) -- (0,-2);
\node at (0,2.1) {};
\node at (0,-2.1) {};
\node at (2.1,0) {};
\node at (-2.1,0) {};
\end{tikzpicture}}
\title{Walks obeying two-step rules on the square lattice: full, half and quarter planes}
\author{Nicholas R. Beaton\thanks{The author gratefully acknowledge support from the Australian Research Council, and in particular grant DE170100186.}\\
\small School of Mathematics and Statistics\\[-0.8ex]
\small The University of Melbourne \\[-0.8ex]
\small Parkville, VIC, Australia\\
\small\tt nrbeaton@unimelb.edu.au}
\begin{document}
\maketitle

\abstract{We consider walks on the edges of the square lattice $\mathbb Z^2$ which obey \emph{two-step rules,} which allow (or forbid) steps in a given direction to be followed by steps in another direction. We classify these rules according to a number of criteria, and show how these properties affect their generating functions, asymptotic enumerations and limiting shapes, on the full lattice as well as the upper half plane.

For walks in the quarter plane, we only make a few tentative first steps. We propose candidates for the group of a model, analogous to the group of a regular short-step quarter plane model, and investigate which models have finite versus infinite groups. We demonstrate that the orbit sum method used to solve a number of the original models can be made to work for some models here, producing a D-finite solution. We also generate short series for all models and guess differential or algebraic equations where possible. In doing so, we find that there are possibilities here which do not occur for the regular short-step models, including cases with algebraic or D-finite generating functions but infinite groups, as well as models with non-D-finite generating functions but finite groups.
}

\section{Introduction}\label{sec:intro}

Over the last two decades there has been a flurry of activity regarding lattice walks restricted to certain types of steps and to certain subsets of the lattice. A great deal of this recent activity has in particular focused on \emph{enumerative} properties of such walks, rather than using a probabilistic approach. The general questions one asks relate to $c_n$, the number of such walks taking $n$ steps, and to $C(t) = \sum_{n\geq 0} c_n t^n$, the generating function of the sequence $\{c_n\}$. In particular, one may search for an explicit expression for $c_n$ and/or its asymptotic properties, a closed form for $C(t)$, and whether it is rational, algebraic, or differentially finite (D-finite, also known as holonomic).

Enumeration in the full plane is trivial and all generating functions are rational. Banderier and Flajolet~\cite{Banderier2002Basic} considered \emph{directed} walks in the half and quarter planes, showing that the generating functions of such objects are algebraic, as well as providing accurate asymptotic enumerations. Bijections can be used to extend these results to all lattice walks in the half plane.

Non-directed walks in the quarter plane are a rather more difficult (and interesting!) problem. Bousquet-M\'elou and Petkov\v{s}ek~\cite{BousquetMelou2003Walks} demonstrated the existence of a step set which results in walks with a non-holonomic generating function, showing that the quarter plane is significantly different to the full and half planes. Mishna and Rechnitzer~\cite{Mishna2009Two} found two more.
Bousquet-M\'elou and Mishna~\cite{BousquetMelou2010Walks} showed that there are 79 non-isomorphic models with a step set $\mathcal S\subset \{-1,0,1\}^2\setminus\{(0,0)\}$ (so-called \emph{small steps,} where walks can step along the edges of the square lattice or take diagonal steps across squares), and conjectured that 23 of those have holonomic (or differentially finite, usually written D-finite for short) generating functions, four of which are actually algebraic. They derived solutions for 22 of the 23 holonomic cases. 

For the final algebraic model, a simple expression for the number of paths of length $n$ ending at the origin was conjectured by Gessel around 2000 (the model has subsequently borne his name). This was proved in 2009 by Kauers, Koutschan and Zeilberger \cite{kauers_proof_2009} and generalised in 2010 by Bostan and Kauers \cite{bostan_complete_2010}, in both cases with extensive use of computer algebra software. A `human' proof was not found until 2013 by Bostan, Kurkova and Raschel \cite{bostan_human_2017}, with a more `elementary' proof by Bousquet-M\'elou following in 2016  \cite{bousquet-melou_elementary_2016}.

As for the remaining 56 models, their non-holonomic nature was proved for 51 cases by Kurkova and Raschel~\cite{Kurkova2012Functions}, and the final five were covered by Melczer and Mishna~\cite{Melczer2013Singularity}.

The D-finiteness (or not) of the 79 small step quarter plane models goes hand-in-hand with the nature of a certain group of `birational' transformations, depending on the particular step set. This group (generated by a pair of involutions) is finite exactly when the generating function is D-finite. In particular, 16 of the finite groups are isomorphic to the dihedral group $D_2$, five are isomorphic to $D_3$ and two are isomorphic to $D_4$.

In this paper we consider the enumerative properties of walks which obey a different kind of restriction. Rather than looking at walks which take steps from a set $\mathcal S$ in any order, we will impose a \emph{two-step rule}, which governs which \emph{consecutive pairs} of steps are allowed. We will only consider walks which take unit steps in the four lattice directions $\{(1,0),(0,1),(-1,0),(0,-1)\}$. In the cases where steps in all four directions are allowed (these are the only types of models we consider here), one can alternatively view a two-step rule as a restriction on configurations of \emph{vertices}, whereas the models described above are defined by restrictions on \emph{edges}.

This study was inspired by the works on lattice walks described above, as well as a paper by Guttmann, Prellberg and Owczarek~\cite{Guttmann1993Symmetry} which considers two-step rules in the context of \emph{self-avoiding walks}. These are walks with the (much more complicated) restriction of not being able to visit a lattice vertex more than once. (See~\cite{Madras1993SelfAvoiding} for an overview.) For certain two-step rules (so-called \emph{spiral walks} -- see \cite{Blote1984Spiralling,Brak1998Anisotropic,Guttmann1984Number}), precise expressions for the generating function and the asymptotic form of the enumeration can be found. This is in contrast to general self-avoiding walks, where almost nothing is known about the generating function and only conjectures exist for asymptotics. (However, the number of spiral walks of length $n$ grows asymptotically like $\exp(c\sqrt{n})$ for a constant $c$, which is far removed from the exponential growth of general self-avoiding walks.) In~\cite{Guttmann1993Symmetry} the authors argue that there are essentially four universality classes of two-step rules (classified by a pair of exponents which govern the `size' of a walk in two orthogonal directions) with seven non-isomorphic members.

The goal of this work is to investigate how lattice paths with a different kind of restriction differ from the well known models discussed above. We will see that there are strong similarities between the two approaches, but things are, in general, more complicated (unsurprisingly). In at least some cases the methods used in the aforementioned works can be applied here, but there are still many techniques which we have not yet attempted to use.

\subsection{Overview of the paper}\label{ssec:overview}

In \cref{sec:defs} we introduce the models and some notation. In \cref{sec:fullplane} we determine the number of non-isomorphic rules in the full plane, and solve all models using both transfer matrices and functional equations. We also compute asymptotics for the number of walks and the expected location of the endpoint. In \cref{sec:halfplane} we run through the same calculations for walks restricted to the upper half plane. \cref{sec:quarter_plane} is then dedicated to the quarter plane. Here, computing the number of non-isomorphic models is much more complicated. We derive a functional equation satisfied by the generating function, and show that it can, at least sometimes, be solved using existing methods. We then present a range of computational results, including an investigation of the groups for all the models, and some series analysis. Finally some concluding remarks can be found in \cref{sec:conclusion}.

Some code used here will be made available at the author's website.\footnote{ \url{www.nicholasbeaton.com/papers}}

\section{Definitions and classification}\label{sec:defs}

We begin with some basic definitions and notation, and introduce some criteria designed to filter out trivial or pathological two-step rules.

We denote by $\mathbb Z^2$ the square lattice, and by $\x(v)$ and $\y(v)$ the $\x$- and $\y$-coordinates of a vertex $v$.
The four types of steps on the edges of $\mathbb Z^2$ are denoted by \emph{east, north, west} and \emph{south}.
A \emph{two-step rule} (or just \emph{rule}) $\R$ is a mapping
\begin{equation}
  \R:\{\text{east, north, west, south}\}^2 \mapsto \{0,1\},
\end{equation}
where $\R(i,j) = 1$ if a step of type $i$ may be followed by a step of type $j$, and $\R(i,j)=0$ if not.
There are thus $2^{4^2} = 2^{16} = 65536$ different two-step rules.
For a given step type $\theta$, we will denote by $f(\theta)$ the set of steps which can \emph{follow} a $\theta$ step, and by $p(\theta)$ the set of steps which can \emph{precede} a $\theta$ step.

A rule can be easily and usefully represented by a $4\times4$ \emph{transfer matrix} $\T \equiv \T(\R)$, where we order the rows and columns east, north, west and south, and take $\T_{ij} = \R(i,j)$. For example, the (counter-clockwise) spiral walks we mentioned in the introduction (there restricted to be self-avoiding, though we no longer impose that restriction) are, after a step in any direction, allowed to take another step in that same direction or turn to their left. The transfer matrix of this rule is thus
\begin{equation}\label{eqn:spiral_T}
  T = \begin{pmatrix}
  1 & 1 & 0 & 0 \\
  0 & 1 & 1 & 0 \\
  0 & 0 & 1 & 1 \\
  1 & 0 & 0 & 1
  \end{pmatrix}.
\end{equation}
We can also use diagrammatic representations, as illustrated in \cref{fig:spiralwalks_diagram}. The first is similar to that used in~\cite{Guttmann1993Symmetry}. The second is the graph whose adjacency matrix is $\T$.
\begin{figure}[t]
\centering
\begin{subfigure}{0.4\textwidth}
\centering
\begin{tikzpicture}[scale=1.1]
\node [draw, circle, fill, inner sep=3pt] at (1,0) {};
\node [draw, circle, fill, inner sep=3pt] at (0,1) {};
\node [draw, circle, fill, inner sep=3pt] at (-1,0) {};
\node [draw, circle, fill, inner sep=3pt] at (0,-1) {};
\draw [line width=2pt, -Latex] (1,0) -- (2,0);
\draw [line width=2pt, -Latex] (1,0) -- (1,1);
\draw [line width=2pt, -Latex] (0,1) -- (0,2);
\draw [line width=2pt, -Latex] (0,1) -- (-1,1);
\draw [line width=2pt, -Latex] (-1,0) -- (-2,0);
\draw [line width=2pt, -Latex] (-1,0) -- (-1,-1);
\draw [line width=2pt, -Latex] (0,-1) -- (0,-2);
\draw [line width=2pt, -Latex] (0,-1) -- (1,-1);
\end{tikzpicture}
\caption{}
\end{subfigure}
\hspace{1cm}
\begin{subfigure}{0.4\textwidth}
\centering
\includegraphics[scale=0.9]{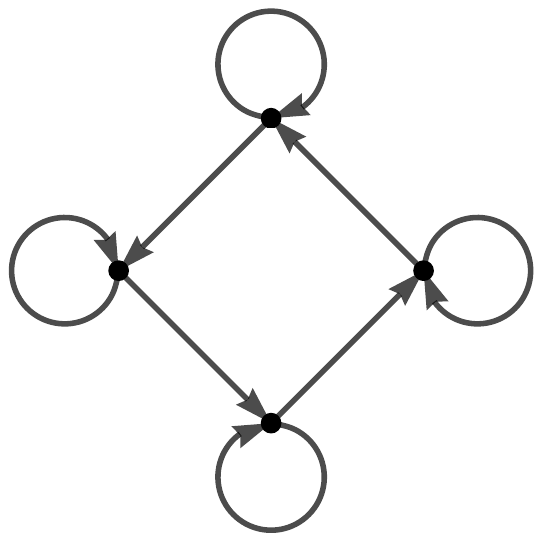}
\caption{}
\end{subfigure}
\caption{Diagrammatic representations of the rule for counter-clockwise spiral walks. (a) The vertices at $(0,1), (1,0),  (0,-1)$ and $(-1,0)$ represent ``incoming'' north, east, south, and west steps respectively, and the arrows coming out of each of these vertices indicate what kind of steps can follow. (b) The directed graph whose adjacency matrix is the transfer matrix $\T$.}
\label{fig:spiralwalks_diagram}
\end{figure}

The transfer matrix serves not only as a compact way to write down a rule, but also as an easy way to enumerate walks of a given length. For a given two-step rule, define the $1\times4$ vector $\mathbf c_m \equiv\mathbf c_m(\R) = (e_m, n_m, w_m, s_m)$, where $e_m$ is the number of walks of length $m$ which follow the rule and end with an east step, and similarly for $n_m, w_m$ and $s_m$. If we allow walks to begin with a step in any direction, regardless of the rule, then $\mathbf c_1 = (1,1,1,1)$ and for $m\geq 2$, $\mathbf c_m =\mathbf c_{m-1}\cdot \T$. Then of course by induction,
\begin{equation}\label{eqn:cm_induction}
\mathbf c_m =\mathbf c_1\cdot \T^{m-1} \quad \text{for } m\geq1.
\end{equation}
For example, for our spiral walks, we have $\mathbf c_2 = (2,2,2,2)$, $\mathbf c_3 = (4,4,4,4)$, $\mathbf c_4 = (8,8,8,8)$ and so on. See \cref{fig:example_spiral} for an example of a long spiral walk.
\begin{figure}[t]
\centering
\includegraphics[width=0.6\textwidth]{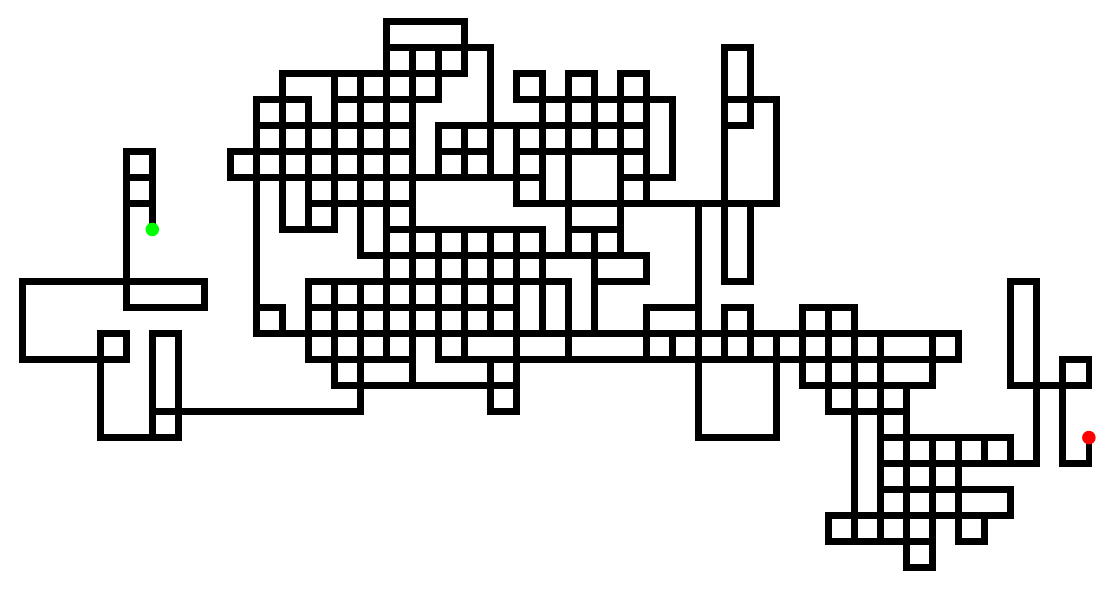}
\caption{A random spiral walk of length 1000, starting at the green vertex and ending at the red.}
\label{fig:example_spiral}
\end{figure}

We will also use the notation $p_m = \lVert\mathbf c_m\rVert_1 = e_m+n_m+w_m+s_m$ to represent the total number of walks of length $m$ following a given rule. (Here $\lVert\mathbf v\rVert_1$ denotes the $L_1$ norm.)  For simplicity we set $t_0=0$, that is, we do not count the empty walk containing no steps. For a matrix $\mathbf M$, we will use the notation $\mathbf M_{i*}$ and $\mathbf M_{*i}$ to denote the $i$-th row and column of $\mathbf M$ respectively.

Now the 65536 different two-step rules include a number of trivial cases of little interest to us. For example, the ``zero'' rule which allows no steps after the first, or the rule which never allows walks to turn corners:
\begin{equation}
  \mathbf{T} = \begin{pmatrix}
  0 & 0 & 0 & 0 \\
  0 & 0 & 0 & 0 \\
  0 & 0 & 0 & 0 \\
  0 & 0 & 0 & 0
  \end{pmatrix}
  \qquad \text{or} \qquad \mathbf{T} = \begin{pmatrix}
  1 & 0 & 0 & 0 \\
  0 & 1 & 0 & 0 \\
  0 & 0 & 1 & 0 \\
  0 & 0 & 0 & 1
  \end{pmatrix}
\end{equation}
There are many different criteria we could use to define a ``trivial'' rule, but here we will consider rules which satisfy (at least) the following conditions:
\begin{itemize}
\item Walks should not be \emph{directed} or \emph{partially directed}: they should be able to take steps in all four directions.
\item The step set should be \emph{connected}: for any two step types $i$ and $j$, a walk ending with $i$ should be able to take a finite number of steps to become a walk ending in $j$.
\end{itemize}
These two conditions boil down to a restriction on the transfer matrix $\T$. We say that a rule is \emph{connected} if for any $i,j\in\{\text{east, north, west, south}\}$, there exists a $k\geq 1$ such that $(\T^k)_{ij}>0$.

\begin{lemma}\label{lem:number_of_connected}
Of the 65536 two-step rules, 25696 are connected.
\end{lemma}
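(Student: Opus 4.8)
The plan is to recognise this lemma as a statement about strong connectivity of a four-vertex digraph, and then to reduce it to a small classical enumeration. First, associate to a rule $\R$ the directed graph $G(\T)$ on the vertex set $\{\text{east, north, west, south}\}$ with an arc $i\to j$ exactly when $\T_{ij}=1$ (loops permitted). Since $(\T^k)_{ij}$ counts directed walks of length $k$ from $i$ to $j$ in $G(\T)$, the connectedness condition ``$(\T^k)_{ij}>0$ for some $k\geq 1$'' asserts, for $i\neq j$, that $j$ is reachable from $i$, i.e.\ that $G(\T)$ is strongly connected; and for $i=j$ it asks that $i$ lie on a closed directed walk of positive length, which is automatic in any strongly connected digraph on at least two vertices (walk from $i$ to another vertex and back). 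Hence $\R$ is connected if and only if $G(\T)$ is strongly connected.

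Next I would note that strong connectivity of $G(\T)$ is insensitive to the four diagonal entries of $\T$ (the ``stay in the same direction'' loops): it depends only on the $12$ off-diagonal entries, recording arcs between distinct directions, while the loops may be set arbitrarily. So the number of connected rules equals $2^{4}\cdot s_4$, where $s_4$ is the number of strongly connected digraphs on four labelled vertices with no loops.

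It remains to establish $s_4 = 1606$. The most direct route is enumeration: there are only $2^{12}=4096$ loopless digraphs on four labelled vertices, and one checks strong connectivity of each in constant time, e.g.\ by forming the transitive closure of the adjacency matrix and testing whether all off-diagonal entries are nonzero. Alternatively, $s_4$ follows from the classical recursion for the numbers $s_n$ of strongly connected labelled digraphs: a labelled digraph fails to be strongly connected exactly when some proper nonempty set of vertices has no outgoing arc, and peeling off the union of the source strong components of the digraph of strong components yields an inclusion--exclusion identity expressing $2^{n(n-1)}$ through $s_1,\dots,s_n$; this gives $s_1=1$, $s_2=1$, $s_3=18$, $s_4=1606$ (sequence A003030 in the OEIS). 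Either way, the count of connected rules is $16\cdot 1606 = 25696$.

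The only step with any subtlety is the evaluation $s_4=1606$: if one wants a recursive derivation rather than a machine count, the bookkeeping in the inclusion--exclusion --- in particular keeping track of the (arbitrary) arcs from the source components to the remaining vertices while forbidding arcs in the other direction --- has to be handled carefully. Given how tiny the search space is, though, the brute-force enumeration is entirely rigorous and is the cleanest way to pin down the number, so I anticipate no real obstacle.
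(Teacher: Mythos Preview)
Your proposal is correct and follows essentially the same approach as the paper: interpret connectedness as strong connectivity of the associated four-vertex digraph, observe that loops are irrelevant, and multiply the loopless count $s_4=1606$ (OEIS A003030) by $2^4$. The paper's proof is terser, simply citing the OEIS value, while you additionally justify the $i=j$ case and sketch how $s_4$ can be obtained by recursion or brute force; but the argument is the same.
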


\begin{proof}
This amounts to counting the number of strongly connected digraphs on four labelled vertices with loops. The vertices are the four step types, and there is a directed edge from $x$ to $y$ if step $x$ can be followed by step $y$. The number of such graphs without loops is 1606~\cite[sequence A003030]{OEIS}. Since loops have no effect on strongly-connectedness, the number of strongly connected digraphs with loops is then $1606\times2^4 = 25696$.
\end{proof}

Connectedness is not the only property which affects how we count walks. Consider the following connected rule, given by its transfer matrix and diagram:
\begin{figure}[H]
    \centering
    \begin{tikzpicture}[scale=0.8]
    \node at (-6,0) {$\displaystyle T = \begin{pmatrix}
  0 & 1 & 0 & 1 \\
  1 & 0 & 0 & 0 \\
  0 & 1 & 0 & 0 \\
  1 & 0 & 1 & 0
  \end{pmatrix}$};
    
    \node [draw, circle, fill, inner sep=2pt] at (1,0) {};
\node [draw, circle, fill, inner sep=2pt] at (0,1) {};
\node [draw, circle, fill, inner sep=2pt] at (-1,0) {};
\node [draw, circle, fill, inner sep=2pt] at (0,-1) {};
\draw [line width=1.5pt, -Latex] (1,0) -- (1,1);
\draw [line width=1.5pt, -Latex] (0,1) -- (1,1);
\draw [line width=1.5pt, -Latex] (0,1) -- (-1,1);
\draw [line width=1.5pt, -Latex] (-1,0) -- (-1,1);
\draw [line width=1.5pt, -Latex] (-1,0) -- (-1,-1);
\draw [line width=1.5pt, -Latex] (0,-1) -- (1,-1);
    \end{tikzpicture}
\end{figure}

One can show that for this model, $p_m \sim a_m\cdot\mu^m$ as $m\to\infty$, where $\mu \approx 1.55377$ and
\begin{equation}
  a_m \approx \begin{cases} 2.42491 & m \text{ odd} \\ 2.41421 & m \text{ even.}\end{cases}
\end{equation}
That is, there is an underlying periodicity to this particular model. Here the period is two, though there also exist rules with periods three and four, for example
\begin{equation}
  \T=\begin{pmatrix}
  0 & 1 & 0 & 1 \\
  0 & 0 & 1 & 0 \\
  1 & 0 & 0 & 0 \\
  0 & 0 & 1 & 0
  \end{pmatrix}
  \qquad \text{and} \qquad
  \T=\begin{pmatrix}
  0 & 1 & 0 & 0 \\
  0 & 0 & 1 & 0 \\
  0 & 0 & 0 & 1 \\
  1 & 0 & 0 & 0
  \end{pmatrix}.
\end{equation}
(In the second case the periodicity will not be apparent in the series for the full plane, but will be visible for the half plane.)
We thus introduce another new definition: a two-step rule is \emph{aperiodic} if there exists a $k\geq 1$ such that for any $i,j\in\{\text{east, north, west, south}\}$, $(\T^k)_{ij}>0$. If a connected rule is not aperiodic, it is \emph{periodic}.

For a given connected rule $\R$ and $i\in\{\text{east, north, west, south}\}$, the \emph{period of step type $i$} is the greatest common divisor of all positive integers $k$ with $(\T^k)_{ii}>0$. Then the \emph{period} of $\R$ is the greatest common divisor of the periods of north, south, east and west. It follows that aperiodic rules are precisely those connected rules with period one.

\begin{lemma}\label{lem:number_of_aperiodic}
Of the 25696 connected two-step rules, 25575 are aperiodic.
\end{lemma}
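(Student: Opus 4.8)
The plan is to reduce the count to a structural classification of digraphs, exactly as in the proof of \cref{lem:number_of_connected}, but now tracking periodicity. A connected rule corresponds to a strongly connected digraph on the four step types with loops allowed, and the first observation is that a loop (i.e.\ $\T_{ii}=1$ for some $i$) forces the period of step type $i$ to be $1$ and hence makes the rule aperiodic. So every \emph{periodic} connected rule is loopless, and the problem reduces to counting periodic strongly connected \emph{loopless} digraphs on four labelled vertices; subtracting this number from $25696$ gives the claim.

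Next I would invoke the standard cyclic structure of a strongly connected digraph of period $d$: its vertex set splits into $d$ cyclically ordered classes with every arc going from one class to the next, and $d$ is the $\gcd$ of all cycle lengths. On four vertices with no loops the available cycle lengths are $2,3,4$, so $d\in\{2,3,4\}$ when $d>1$, and $d$ cannot be divisible by both $2$ and $3$ (that would need $d\ge 6$). Hence the periodic loopless strongly connected digraphs fall into two \emph{disjoint} families: those with a bipartition $V=V_0\sqcup V_1$ having all arcs crossing (period $2$ or $4$) and those with a cyclic tripartition (period $3$). Since a strongly connected digraph admits at most one such partition, there is no overcounting, and it remains to enumerate each family over the few admissible partition shapes on four vertices.

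For the bipartite family the shapes are $(1,3)$ and $(2,2)$. In the $(1,3)$ case strong connectivity forces all six crossing arcs to be present, giving one digraph per bipartition, so $4$ in total. In the $(2,2)$ case I would write the arc set as a pair $(P,Q)$ with $P\subseteq V_0\times V_1$ and $Q\subseteq V_1\times V_0$, note that $P$ and $Q$ must each be one of the seven bipartite graphs on $2+2$ vertices with no isolated vertex, and test strong connectivity via the two length-two reachability conditions $a_1\rightsquigarrow a_2$ and $a_2\rightsquigarrow a_1$ through $V_1$ (valid because all closed walks here have even length, so a two-vertex contracted digraph is strongly connected iff both arcs are present). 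A short check over the $7\times 7$ pairs gives $35$ digraphs for each of the three $(2,2)$ bipartitions. For the period-$3$ family the only cyclic shape is $(2,1,1)$, strong connectivity again forces every one of the five available arcs, and counting the distinct configurations gives $12$. Summing, there are $4+3\cdot 35+12=121$ periodic connected rules, so the aperiodic ones number $25696-121=25575$.

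The main obstacle is the $(2,2)$ bipartite case: it is the only part that is not essentially immediate, and one must verify carefully that the length-two reachability criterion really captures strong connectivity. Everything else is routine. As an independent check — and the version most easily realised in code — one can instead use Wielandt's bound: a strongly connected digraph on $n=4$ vertices is aperiodic iff $(\T^{10})_{ij}>0$ for all $i,j$, so a direct scan of the $25696$ connected rules confirms the count.
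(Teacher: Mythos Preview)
Your proof is correct and entirely self-contained: the reduction to loopless digraphs is right, the cyclic-structure dichotomy into bipartite (period $2$ or $4$) and tripartite (period $3$) cases is clean, and your counts are accurate (in particular the $(2,2)$ count of $35$ per bipartition checks out via inclusion--exclusion on the $49$ pairs $(P,Q)$, with $8+8-2=14$ failing pairs). The total $4+105+12=121$ periodic connected rules gives $25696-121=25575$.

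The paper's own proof is quite different in spirit: it simply observes that an aperiodic rule is exactly one whose transfer matrix is primitive, and then cites the known enumeration of primitive $4\times 4$ $0$--$1$ matrices (OEIS A070322) to read off $25575$ directly. So where the paper defers to an external reference, you give an explicit structural count of the complement. Your approach has the advantage of being fully verifiable by hand and of explaining \emph{why} exactly $121$ rules are periodic; the paper's approach is shorter and connects the lemma to a broader known sequence. Your closing remark about Wielandt's bound as a computational cross-check is also in the spirit of what the paper does slightly later (in \S\ref{ssec:isomorphisms_full}) when it actually generates $\mathcal{A}$.
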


\begin{proof}
In terms of graphs, this amounts to counting those strongly connected digraphs on four labelled vertices with the property that the greatest common divisor of the lengths of all cycles is one. However, a better-known way of looking at this problem is to consider the transfer matrix $\T$, and observe that a rule is aperiodic precisely when its matrix is \emph{primitive}. The enumeration of primitive 0-1 matrices has been previously studied~\cite[sequence A070322]{OEIS}, and the number of $4\times4$ such matrices is 25575.
\end{proof}

For the rest of the paper, we will focus solely on aperiodic two-step rules. The primary reason for this is brevity -- all of the results we give here can be generalised to periodic rules without too much difficulty, but the expressions become much longer and add little to the overall picture.

\section{Enumeration in the full plane}\label{sec:fullplane}

\cref{lem:number_of_connected,lem:number_of_aperiodic} provide that there are 25696 connected two-step rules, of which 25575 are aperiodic. However, this does not take into account the fact that many rules are isomorphic to one another (in the sense of relabelling the step set). We thus devote some time to considering the possible isomorphisms, and determining the number of \emph{non-isomorphic} connected and primitive two-step rules.

\subsection{Isomorphisms in the full plane}\label{ssec:isomorphisms_full}

In the full plane, we are free to apply any permutation to the step set of a walk to obtain another walk obeying a (possibly different) rule. (This is certainly not the case in the half plane, where permuting the step set may result in a walk which no longer stays in the half plane.) We thus see that we can use the symmetric group $S_4$ as the permutation group acting on the set $\mathcal T$ of two-step rules. It is also clear that any permutation of the step set will also preserve connectedness and aperiodicity, and thus $S_4$ also acts on the sets $\mathcal C$ and $\mathcal A$ of connected and aperiodic rules respectively.

Given a set $X$ of combinatorial objects, the standard tool for determining how many isomorphism classes $X$ has, under the action of a group $G$, is Burnside's lemma. If $N_G(X)$ is the number of isomorphism classes of $X$ under the group action of $G$, and $f_X(g)$ is the number of elements of $X$ which are fixed by $g\in G$, then
\begin{equation}
    N_G(X) = \frac{1}{|G|}\sum_{g\in G} f_X(g).
\end{equation}
Given the size of the sets in question, performing this calculation by hand is out of the question. It is, however, straightforward to write code to calculate this automatically. (We use \textsc{Mathematica}.) We must first generate the sets $\mathcal T$, $\mathcal C$ and $\mathcal A$.

The set $\mathcal T$ is straightforward -- in terms of transfer matrices, it is just the set of all $4\times4$ matrices with elements from $\{0,1\}$. For $\mathcal C$, we must inspect each element of $\mathcal T$ to determine whether it is connected or not. That is, for each $\T\in\mathcal T$ and each pair $i,j\in\{\text{north, east, south, west}\}$, we must determine if there exists a $k\geq 1$ such that $(\T^k)_{ij}>0$.

Note that if such a $k$ does exist, then $\min\{k:(\T^k)_{ij}>0\} \leq 4$. This is because the shortest path from $i$ to $j$ in the adjacency graph of $\T$ must have length at most four -- any longer path must necessarily visit a vertex more than once, but then the sub-path between those two visits can be deleted. Thus, for each $\T$ and $i,j$, we need only evaluate $\T^k$ for $k=1,2,3,4$ and check that the $ij$ entry is non-zero in at least one of those four matrices.

For $\mathcal A$, we need to do something similar: for each $\T\in\mathcal C$, we need to determine if there exists a $k\geq 1$ such that $(\T^k)_{ij}$ for all pairs $i,j$. Here, however, it is not so obvious what the maximum value of $k$ is. Fortunately, there is a result due to Wielandt~\cite{Schneider2002Wielandts,Wielandt1950Unzerlegbare} which states that if $\mathbf M$ is a primitive $n\times n$ matrix, then for all $i,j\leq n$, $(\mathbf M^k)_{i j}>0$ for $k\geq (n-1)^2+1$. In our case, this means that we only need to check if $\T^{10}$ is positive. Moreover, Wielandt's result states that any matrix which is a permutation of
\begin{equation}
  \T = \begin{pmatrix}
  0 & 1 & 0 & 0 \\
  0 & 0 & 1 & 0 \\
  1 & 0 & 0 & 1 \\
  1 & 0 & 0 & 0
  \end{pmatrix},
\end{equation}
raised to the power of $k$, will not be positive for $k<10$. That is, $k=10$ is the smallest exponent we can use which is guaranteed to confirm if a rule is aperiodic or not.

Given all this, we can then apply Burnside's lemma, to obtain the following.

\begin{lemma}\label{lem:fp_num_noniso}
Let $\mathcal T$, $\mathcal C$ and $\mathcal A$ be the sets of all, connected and aperiodic two-step rules respectively, and let $N_G(X)$ be the number of isomorphism classes of a set $X$ under the group action of $G$. Then
\begin{align}
N_{S_4}(\mathcal T) &= 3044\\
N_{S_4}(\mathcal C) &= 1168\\
N_{S_4}(\mathcal A) &= 1159.
\end{align}
\end{lemma}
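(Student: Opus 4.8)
The plan is to apply Burnside's lemma, as foreshadowed before the statement, but to carry out the orbit-counting explicitly by conjugacy class of $S_4$ rather than brute-forcing all $65536$ rules (that brute force being kept in reserve as a cross-check). First I would pin down the action precisely: $\sigma\in S_4$ sends a transfer matrix $\T$ to the matrix with entries $\T_{\sigma^{-1}(i),\sigma^{-1}(j)}$, i.e.\ simultaneous permutation of the rows and columns indexed by the four step types. A rule is fixed by $\sigma$ exactly when $\T$ is constant on each orbit of $\langle\sigma\rangle$ acting on the set $\{1,2,3,4\}^2$ of ordered index pairs, so $f_{\mathcal T}(\sigma)=2^{c(\sigma)}$, where $c(\sigma)$ is the number of such orbits. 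Since connectedness and aperiodicity are preserved by any relabelling of the step set (noted in \cref{ssec:isomorphisms_full}), the bijection $M\mapsto h\cdot M$ shows that $f_{\mathcal C}(g)$ and $f_{\mathcal A}(g)$ — like $f_{\mathcal T}(g)$ — depend only on the conjugacy class of $g$. Thus the whole computation reduces to the five conjugacy classes of $S_4$, with representatives $e$, $(12)$, $(123)$, $(1234)$, $(12)(34)$ and class sizes $1,6,8,6,3$.

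For $N_{S_4}(\mathcal T)$ this is immediate: compute $c(\sigma)$ for each representative by listing the orbits of $\langle\sigma\rangle$ on ordered pairs ($c(e)=16$, with strictly smaller values for the other four classes), set $f_{\mathcal T}(\sigma)=2^{c(\sigma)}$, and evaluate $N_{S_4}(\mathcal T)=\tfrac1{24}\sum_g 2^{c(g)}$; this yields $3044$, which one can also match against the known count of digraphs-with-loops on four unlabelled vertices. For $\mathcal C$ and $\mathcal A$ there is no comparably clean closed form — connectivity and primitivity do not interact transparently with the orbit structure — so instead, for each of the five representatives $\sigma$, I would enumerate the (at most $2^{c(\sigma)}$, hence at most $2^{16}$ and for $\sigma\neq e$ far fewer) matrices fixed by $\sigma$ and test each one: for connectivity, check that every entry of $\T+\T^2+\T^3+\T^4$ is positive (valid by the length-$\le 4$ shortest-path bound from \cref{ssec:isomorphisms_full}); for aperiodicity, check that every entry of $\T^{10}$ is positive (valid by Wielandt's bound). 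Counting the survivors gives $f_{\mathcal C}(\sigma)$ and $f_{\mathcal A}(\sigma)$; note that $f_{\mathcal C}(e)=25696$ and $f_{\mathcal A}(e)=25575$ are exactly \cref{lem:number_of_connected,lem:number_of_aperiodic}, so the identity term costs nothing. Burnside's lemma with the class weights $1,6,8,6,3$ then gives $N_{S_4}(\mathcal C)=1168$ and $N_{S_4}(\mathcal A)=1159$.

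There is no genuine mathematical obstacle here — the statement is a finite computation — so the hard part is purely bookkeeping: getting the action convention (and hence the orbit counts $c(\sigma)$) right, implementing the connectivity and primitivity tests faithfully, and being confident in the arithmetic. As an independent safeguard I would also run the naive version: generate all $65536$ transfer matrices, partition them directly into $S_4$-orbits, discard the non-connected (resp.\ non-primitive) orbits, and count what remains. Agreement of the two computations, together with the consistency checks $f_{\mathcal C}(e)=25696$, $f_{\mathcal A}(e)=25575$ and the sanity relation $N_{S_4}(\mathcal C)-N_{S_4}(\mathcal A)=\tfrac1{24}\sum_g\bigl(f_{\mathcal C}(g)-f_{\mathcal A}(g)\bigr)$ (the summand counting periodic rules fixed by $g$), makes the result secure.
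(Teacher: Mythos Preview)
Your proposal is correct and follows essentially the same approach as the paper: apply Burnside's lemma and carry out the resulting finite computation, using the length-$\le 4$ shortest-path test for connectivity and Wielandt's $\T^{10}$ test for primitivity. The paper's proof is actually terser than yours --- it simply declares the computation ``straightforward to write code to calculate'' --- whereas you make the conjugacy-class reduction and the closed form $f_{\mathcal T}(\sigma)=2^{c(\sigma)}$ explicit, which is a nice touch but not a different method.
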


\subsection{Asymptotics via eigenvalues}\label{ssec:eigenvalues}

As we saw in \cref{sec:defs}, the transfer matrix $\T$ of a two-step rule can be easily used to enumerate walks of length $m$, via the simple relationship~\eqref{eqn:cm_induction}. We are also interested in the \emph{asymptotics} of $\mathbf c_m$ and $p_m$.

We first take the Jordan decomposition of $\T$; that is, we find the matrices $\mathbf S$ and $\mathbf J$ such that $\T = \mathbf S \mathbf J \mathbf S^{-1}$. If $\T$ is diagonalisable, then of course $\mathbf J$ is the diagonal matrix of eigenvalues of $\T$ and $\mathbf S$ is the matrix of right eigenvectors of $\T$. More generally, $\mathbf J$ is the Jordan normal form of $\T$ and $\mathbf S$ is the matrix of generalised eigenvectors. From~\eqref{eqn:cm_induction}, we then have
\begin{equation}
    \mathbf c_m = \mathbf c_1 \cdot \mathbf S \mathbf J^{m-1} \mathbf S^{-1}.
\end{equation}

Now if the rule in question is aperiodic, then its transfer matrix $\T$ is primitive. We can thus invoke the Perron-Frobenius theorem, which states that the eigenvalue of $\T$ with greatest absolute value is real, positive, simple and unique. Call it $\mu$. We can assume without loss of generality that $\mathbf J_{11} = \mu$. (Otherwise we need only permute the rows and columns of $\mathbf J$ and $\mathbf S$.) Then as $m$ grows large, $\mu^m$ will come to dominate each of the terms of $\mathbf c_m$. Asymptotically, we will have
\begin{equation}
    \mathbf c_m \sim \lVert\mathbf{S}_{*1}\rVert_1 \cdot \mu^{m-1}\cdot (\mathbf{S}^{-1})_{1*},
\end{equation}
where $\lVert\mathbf{S}_{*1}\rVert_1$ is the sum of the first column of $\mathbf{S}$ (that is, the eigenvector of $\T$ corresponding to $\mu$), and $(\mathbf{S}^{-1})_{1*}$ is the first row of $\mathbf{S}^{-1}$. Hence
\begin{equation}\label{eqn:eigen_asymp_aperiodic}
p_m \sim \frac{1}{\mu}\cdot\lVert\mathbf{S}_{*1}\rVert_1 \cdot \lVert(\mathbf{S}^{-1})_{1*}\rVert_1 \cdot \mu^{m}.
\end{equation}

We note here that if the rule is periodic with period $k$, then the matrix $\T$ is \emph{irreducible} with period $k$. In this case we can apply the more general version of the Perron-Frobenius theorem, which states that $\T$ has $k$ eigenvalues of maximal absolute value, located at points $\mu\cdot\exp(2n\pi i/k)$ for $0\leq n<k$, where $\mu$ is real and positive. Each such eigenvalue is simple. In light of this, we can compute the asymptotics of $\mathbf c_m$ by essentially repeating the above procedure for each dominant eigenvalue and adding the resulting contributions.

\subsection{Generating functions}\label{ssec:fp_gfs}

We now turn our attention to the \emph{generating functions} of walks obeying two-step rules. In the full plane this is a very simple matter, but as we will later see, things become more complicated in restricted geometries. We will start to count walks not only by their length, but also by the coordinates of their endpoint.

For a given two-step rule $\R$, define $e_{m,a,b}$ to be the number of $m$-step walks obeying $\R$ which end with an east step, having started at the origin and ending at the coordinate $(a,b)$. Similarly define $n_{m,a,b}$, $w_{m,a,b}$ and $s_{m,a,b}$, and take $p_{m,a,b} = e_{m,a,b} + n_{m,a,b} + w_{m,a,b} + s_{m,a,b}$.
Then, define the partition function
\begin{equation}
  E_m(x,y) = \sum_{a,b} e_{m,a,b} x^a y^b,
\end{equation}
and similarly define $N_m(x,y)$, $W_m(x,y)$, $S_m(x,y)$ and $P_m(x,y)$. Let ${\mathbf C}_m(x,y)$ be the $1\times4$ vector of the individual partition functions.

The generating function for walks ending with an east step is then
\begin{equation}
  F_e(t;x,y) \equiv F_e(x,y) = \sum_m E_m(x,y) t^m = \sum_{m,a,b} e_{m,a,b} t^m x^a y^b.
\end{equation}
We likewise have the generating functions $F_n(t;x,y)$, $F_w(t;x,y)$, $F_s(t;x,y)$ and $F_p(t;x,y)$.

The recursive relation $\mathbf c_m = \mathbf c_{m-1}\cdot \mathbf{T}$ can be encoded with linear equations in the generating functions. For example, returning to our spiral walks from earlier, we have
\begin{align}
  F_e(x,y) &= tx + txF_e(x,y) + txF_s(x,y) \\
  F_n(x,y) &= ty + tyF_e(x,y) + tyF_n(x,y) \\
  F_w(x,y) &= t\olx + t\olx F_n(x,y) + t\olx F_w(x,y) \\
  F_s(x,y) &= t\oly + t\oly F_w(x,y) + t\oly F_s(x,y),
\end{align}
where $\olx = \frac1x$ and $\oly = \frac1y$.

To put things in terms of matrices, define
\begin{equation}
  \hat \T(t;x,y) \equiv \hat \T(x,y) \equiv \hat \T = \T\cdot\begin{pmatrix} tx & 0 & 0 & 0 \\ 0 & ty & 0 & 0 \\ 0 & 0 & t\olx & 0 \\ 0 & 0 & 0 & t\oly \end{pmatrix}.
\end{equation}
Then the above equations can be encoded in a single matrix equation:
\begin{equation}\label{eqn:fp_system_matrices}
(\mathbf{I}-\hat\T^\top)\cdot\begin{pmatrix}F_e(x,y) \\ F_n(x,y) \\ F_w(x,y) \\ F_s(x,y)\end{pmatrix} = \begin{pmatrix}tx \\ ty \\ t\olx \\ t\oly\end{pmatrix}.
\end{equation}

The system~\eqref{eqn:fp_system_matrices} will have a unique solution for every two-step rule. To see this, suppose there is a rule $\R$ without a unique solution. Then $\det (\mathbf{I}-\hat\T^\top) = 0$. Since the determinant of a matrix is a polynomial in the entries of the matrix, $\det (\mathbf{I}-\hat\T^\top)$ is a polynomial in $t$, and is thus a continuous function of $t$. Now take $t\to0$. The matrix $\hat\T^\top$ becomes the zero matrix in this limit, and hence $\det (\mathbf{I}-\hat\T^\top) \to \det(\mathbf{I}) = 1$, contradicting the initial assumption.

For example, spiral walks have the solution
\begin{align}
  F_e(x,y) &= \frac{t x (t^2-t y+x y+t^2 y^2-t x y^2)}{D(x,y)}\\
  F_n(x,y) &= \frac{t y (t^2 - t x + t^2 x^2 - t y + x y)}{D(x,y)} \\
  F_w(x,y) &= \frac{t (-t + t^2 x + y - t x y + t^2 x y^2)}{D(x,y)} \\
  F_s(x,y) &= \frac{t (x - t x^2 + t^2 y - t x y + t^2 x^2 y)}{D(x,y)}
\end{align}
where
\begin{multline}
  D(x,y) = t^2-t x-t^3 x+t^2 x^2-t y-t^3 y+x y+2 t^2 x y-t x^2 y-t^3 x^2 y \\ +t^2 y^2-t x y^2-t^3 x y^2+t^2 x^2 y^2.
\end{multline}
Then
\begin{align}
F_p(x,y) &= \begin{multlined}[t] \frac{t}{D(x,y)} \left(-t + x + 2 t^2 x - t x^2 + y + 2 t^2 y - 4 t x y + x^2 y + 
 2 t^2 x^2 y - t y^2\right. \\ \left.+ x y^2 + 2 t^2 x y^2 - t x^2 y^2\right) \end{multlined} \\
 &= (x+y+\olx+\oly)t + (x^2+xy + y^2+\olx y+\olx^2+\olxy+\oly^2+x\oly)t^2+\dots
\end{align}

From here, the most straightforward approach to determine the asymptotic behaviour of $\mathbf c_m$ and $p_m$ is \emph{singularity analysis}. (See~\cite{Flajolet2009Analytic} for the definitive reference.) According to this method, we look for the singularities of the generating functions (evaluated at $x=y=1$). The \emph{dominant singularities} -- those closest to the origin $t=0$ -- are the ones which dominate the asymptotic behaviour. The \emph{location} of a dominant singularity determines the exponential growth rate of the coefficients of the generating function, while the \emph{nature} of the singularity (pole, branch point, etc.) affects the subexponential factors.

While we can certainly do this on a case-by-case basis, we have little feeling for the structure of the generating functions. Instead, we will present an alternative method of construction, which will give information about the combinatorial meaning of the numerators and denominators, as well as making things easier when we move to the half and quarter planes.

\begin{theorem}\label{thm:fullplane_combinatorial}
The generating functions $F_\theta$ each satisfy a functional equation of the form
\begin{equation}\label{eqn:defining_A_B}
    F_\theta(x,y) = A_\theta(x,y) + B_\theta(x,y)F_\theta(x,y),
\end{equation}
where
\begin{itemize}
\item $A_\theta(x,y)$ is the generating function of walks which start in any direction and contain only one $\theta$ step, being their last step; and
\item $B_\theta(x,y)$ is the generating function of walks which start in any direction from $f(\theta)$ and contain only one $\theta$ step, being their last step.
\end{itemize}
\end{theorem}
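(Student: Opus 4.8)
The plan is to establish the functional equation \eqref{eqn:defining_A_B} by a direct combinatorial decomposition of walks obeying the rule $\R$ according to the position of their \emph{last} $\theta$ step, and to identify the contributions as the claimed generating functions $A_\theta$ and $B_\theta$.

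First I would set up the decomposition. Fix a step type $\theta$, and consider any nonempty walk $\gamma$ obeying $\R$ whose last step is a $\theta$ step; such walks are exactly those counted by $F_\theta(x,y)$. Since $\gamma$ ends with a $\theta$ step, it has at least one $\theta$ step, so we may locate the \emph{first} occurrence of $\theta$ (alternatively, and more convenient for the recursion, we could instead locate the \emph{last-but-one} $\theta$ step; I will use the first occurrence and then argue symmetrically). Actually the cleanest route is the following: split on whether $\gamma$ contains exactly one $\theta$ step or more than one. If $\gamma$ contains exactly one $\theta$ step, then since its last step is $\theta$, the single $\theta$ step \emph{is} the last step; these are precisely the walks enumerated by $A_\theta(x,y)$ by definition. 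If $\gamma$ contains at least two $\theta$ steps, write $\gamma = \gamma' \cdot \delta$, where $\gamma'$ is the prefix ending at the \emph{second-to-last} $\theta$ step of $\gamma$, and $\delta$ is the remaining suffix. Then $\gamma'$ is a walk obeying $\R$ that ends with a $\theta$ step, so it is counted by $F_\theta(x,y)$; and $\delta$ is a walk which begins with the step immediately following $\gamma'$'s terminal $\theta$ — hence a step in $f(\theta)$ — obeys $\R$, and contains exactly one $\theta$ step, namely its final step. These are exactly the walks counted by $B_\theta(x,y)$.

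Next I would check that this decomposition respects the weighting and is a bijection. The weight of a walk is $t^{\text{length}} x^{\text{(net $x$-displacement)}} y^{\text{(net $y$-displacement)}}$, which is multiplicative under concatenation, so the generating function of the set of pairs $(\gamma',\delta)$ is the product $B_\theta(x,y) F_\theta(x,y)$ — note the order is immaterial for the product, and one must be slightly careful that $B_\theta$ as defined tracks the net displacement of $\delta$ (which it does, being a genuine walk generating function). Conversely, given any $\gamma'$ counted by $F_\theta$ and any $\delta$ counted by $B_\theta$, their concatenation $\gamma'\cdot\delta$ obeys $\R$: the only new consecutive pair is the junction, where a $\theta$ step (the end of $\gamma'$) is followed by the first step of $\delta$, which lies in $f(\theta)$ by definition, so $\R$ permits it; and $\gamma'\cdot\delta$ ends with a $\theta$ step and has at least two $\theta$ steps. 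This is manifestly inverse to the splitting map, so the two families are in weight-preserving bijection. Summing the two cases gives $F_\theta = A_\theta + B_\theta F_\theta$, as required.

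I do not expect a serious obstacle here — the statement is essentially a symbolic-method "first-return"-style decomposition — but the one point requiring care is the bookkeeping of the \emph{initial} step. The definitions of $A_\theta$ and $B_\theta$ both say "walks which start in any direction" (for $A_\theta$) or "start in any direction from $f(\theta)$" (for $B_\theta$); I must make sure this matches the convention in \eqref{eqn:cm_induction} that walks of length one are allowed to begin with any step regardless of $\R$, and that $\R$ only constrains consecutive pairs. In the decomposition, $\gamma'$ inherits whatever legitimate starting step $\gamma$ had, while $\delta$'s starting step is forced to lie in $f(\theta)$ by the junction constraint — so the "start in any direction from $f(\theta)$" clause in the definition of $B_\theta$ is exactly right, and imposing no further constraint on $\delta$'s start beyond membership in $f(\theta)$ is precisely what a freshly-started walk obeying $\R$ does. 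A final sentence would note that, although stated for a single $\theta$, the argument is uniform over $\theta\in\{\text{east, north, west, south}\}$, which is all that is claimed.
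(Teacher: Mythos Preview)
Your proof is correct and takes essentially the same approach as the paper: both decompose a walk ending in $\theta$ according to whether it contains exactly one $\theta$ step (giving $A_\theta$) or more, in which case it is split at the second-to-last $\theta$ step into a prefix counted by $F_\theta$ and a suffix counted by $B_\theta$. Your write-up is in fact more thorough than the paper's, spelling out the bijection, the multiplicativity of the weight, and the compatibility of the junction with the initial-step convention.
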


\begin{proof}
Equation~\eqref{eqn:defining_A_B} then encodes a combinatorial construction: walks which end in $\theta$ either contain only one $\theta$ step, which must naturally be their last step, and are counted by $A_\theta$; or they contain more than one $\theta$ step, in which case they can be decomposed into the walk up to and including the second-last $\theta$ step (which could be any walk counted by $F_\theta$) and then the remaining steps (any walk counted by $B_\theta$).
\end{proof}

Since we clearly can't have $B_\theta(x,y) =1$, we then obtain
\begin{equation}\label{eqn:F_with_AB}
F_\theta(x,y) = \frac{A_\theta(x,y)}{1-B_\theta(x,y)}.
\end{equation}

Before going into the singularity structure of $F_\theta$, we will show how to construct the generating functions $A_\theta$ and $B_\theta$. Define $\mathbf I_\theta$ to be the $4\times4$ identity matrix with the $\theta$-th element on the diagonal 0 instead of 1.

\begin{lemma}\label{lem:constructing_AF_BF}
The generating functions $A_\theta(x,y)$ and $B_\theta(x,y)$ have the form
\begin{equation}\label{eqn:constructing_AF}
A_\theta(x,y) = \left(V_\theta + (\hat\T_{*\theta})^\top\cdot(\mathbf I - \mathbf I_\theta\hat\T^\top)^{-1}\mathbf I_\theta\right)\cdot \begin{pmatrix}tx \\ ty \\ t\olx \\ t\oly\end{pmatrix}
\end{equation}
and
\begin{equation}\label{eqn:constructing_BF}
B_\theta(x,y) = (\hat\T_{* \theta})^\top\cdot\left((\mathbf{I}-\mathbf{I}_\theta\hat\T^\top)^{-1}\mathbf{I}_\theta\cdot(\hat\T_{\theta*})^\top+(\mathbf{I}-\mathbf{I}_\theta)\cdot\begin{pmatrix}1 \\ 1 \\ 1 \\ 1\end{pmatrix}\right),
\end{equation}
where $V_\theta$ is the $1\times4$ vector with 1 in the $\theta$-th position and 0 elsewhere, and $\hat\T_{* \theta}$ (resp. $\hat\T_{\theta *}$) is the $\theta$-th column (resp. row) of $\hat \T$.
\end{lemma}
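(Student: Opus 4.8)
The plan is to unwind the combinatorial descriptions of $A_\theta$ and $B_\theta$ given in \cref{thm:fullplane_combinatorial} into matrix recursions, exactly mirroring the derivation of~\eqref{eqn:fp_system_matrices}. The basic tool is the observation already used in \cref{sec:defs}: if we track walks by their ending step type and append a step, multiplication by $\hat\T$ (rather than $\T$) records both the transition rule and the displacement/length weight $tx,ty,t\olx,t\oly$. So the strategy is: (i) set up a vector of generating functions for the relevant class of walks indexed by ending step type; (ii) write down the one-step recursion it satisfies; (iii) observe that the recursion can be iterated (i.e. a geometric-series / Neumann-series inversion is legitimate, by the same $t\to 0$ argument that made~\eqref{eqn:fp_system_matrices} invertible); (iv) read off $A_\theta$ and $B_\theta$ as specified linear functionals of the resulting solution.

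For $A_\theta$: a walk counted by $A_\theta$ is one that starts in an arbitrary direction, contains exactly one $\theta$ step, and ends with it. Decompose such a walk at its (unique, final) $\theta$ step. The prefix is a nonempty walk that uses \emph{no} $\theta$ step at all except possibly — no: it uses no $\theta$ step, and it ends with some step $\phi\in p(\theta)$; the final step then contributes $\hat\T_{\phi\theta}$. So I would introduce the $1\times 4$ vector $\mathbf G(x,y)$ whose $\phi$-entry is the generating function of nonempty walks avoiding $\theta$ entirely and ending in $\phi$. Avoiding the step type $\theta$ is precisely what $\mathbf I_\theta$ on the right of $\hat\T$ accomplishes: one more step is given by $\mathbf G \mapsto \mathbf G\,\hat\T\,\mathbf I_\theta$ together with the seeding term from the first step. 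Iterating gives a factor $(\mathbf I-\mathbf I_\theta\hat\T^\top)^{-1}$ after transposition, and then prepending the closing $\theta$ step (the column $\hat\T_{*\theta}$) plus handling the degenerate one-step walk whose single step \emph{is} the $\theta$ step (the $V_\theta$ term, seeded by $(tx,ty,t\olx,t\oly)^\top$) yields exactly~\eqref{eqn:constructing_AF}. One has to be careful with the placement of $\mathbf I_\theta$ on the right versus the factors $(\hat\T_{*\theta})^\top$ on the left and the bookkeeping of "starting in any direction" versus "starting in a direction compatible with the rule"; getting these endpoints of the induction right is the only subtle part.

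For $B_\theta$: here the walk starts with a step in some direction in $f(\theta)$ (so the step that \emph{precedes} the recorded history is the $\theta$ step of the larger decomposition), contains no further $\theta$ step, and — by the description in \cref{thm:fullplane_combinatorial} — ends with its last step being, again, a $\theta$ step; equivalently it is empty, or it is a $\theta$-avoiding walk whose appended final step is $\theta$. This is the same calculation as for $A_\theta$ but with the initial seeding vector replaced by $(\hat\T_{\theta*})^\top$ (the row recording which steps $\theta$ may be followed by, weighted), explaining why~\eqref{eqn:constructing_BF} has $(\mathbf I-\mathbf I_\theta\hat\T^\top)^{-1}\mathbf I_\theta\cdot(\hat\T_{\theta*})^\top$ in place of the $\mathbf I_\theta\cdot(tx,ty,t\olx,t\oly)^\top$ one would naively expect, while the empty-walk contribution accounts for the $(\mathbf I-\mathbf I_\theta)\cdot(1,1,1,1)^\top$ term. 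I expect the main obstacle to be purely notational: verifying that the $\theta$-th coordinate is consistently excluded or included at each of the three stages (initial step, intermediate $\theta$-avoiding portion, final $\theta$ step), and that the transposes line up so that left-multiplication by $(\hat\T_{*\theta})^\top$ really does extract "a final $\theta$ step from whatever the walk currently ends in." Once that is pinned down, convergence of the Neumann series $(\mathbf I-\mathbf I_\theta\hat\T^\top)^{-1}=\sum_{k\ge0}(\mathbf I_\theta\hat\T^\top)^k$ as a formal power series in $t$ is immediate, since $\mathbf I_\theta\hat\T^\top$ has entries divisible by $t$, and the formulas~\eqref{eqn:constructing_AF}–\eqref{eqn:constructing_BF} follow.
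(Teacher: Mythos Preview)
Your proposal is correct and follows essentially the same route as the paper: build the vector of generating functions for $\theta$-avoiding walks by removing the $\theta$ row/column via $\mathbf I_\theta$, invert $(\mathbf I-\mathbf I_\theta\hat\T^\top)$ (justified by the $t\to 0$ argument, which is exactly your Neumann-series remark), then append the final $\theta$ step via $(\hat\T_{*\theta})^\top$ and handle the edge case separately. The only cosmetic difference is that the paper phrases the extra $(\mathbf I-\mathbf I_\theta)\cdot(1,1,1,1)^\top$ term in $B_\theta$ as ``accounting for the fact that a $\theta$ step might follow another $\theta$ step,'' whereas you call it the ``empty-walk contribution''; these are the same thing viewed from opposite ends of the single-step walk.
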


\begin{proof}
Equation~\eqref{eqn:constructing_AF} can be interpreted as follows. We first need to consider what can happen before the first $\theta$ step. To do this, we solve the set of equations~\eqref{eqn:fp_system_matrices}, but with all references to $F_\theta$ removed. This is solved by
\begin{equation}
  (\mathbf{I}-\mathbf{I}_\theta\hat\T^\top)^{-1}\cdot \mathbf{I}_\theta\cdot\begin{pmatrix}tx \\ ty \\ t\olx \\ t\oly \end{pmatrix}.
\end{equation}
(The matrix inverse is well-defined for all rules by the same arguments as used above.) Then, we need to attach a $\theta$ step to the end of those walks ending with a step in $p(\theta)$: this is where the $(\hat\T_{* \theta})^\top$ comes in. Finally, we allow walks to start with a $\theta$ step regardless of the rule, which is accounted for by the $V_\theta$ term.

Similarly for $B_\theta$, we first solve for walks which start with a step in $f(\theta)$ but take no $\theta$ steps: this is $(\mathbf{I}-\mathbf{I}_\theta\hat\T^\top)^{-1}\mathbf{I}_\theta\cdot(\hat\T_{\theta*})^\top$. This doesn't account for the fact that a $\theta$ step might be able to follow another $\theta$ step.
Thus, we add $(\mathbf{I}-\mathbf{I}_\theta)\cdot(1,1,1,1)^\top$ (this is just the vector with 1 in the $\theta$-th position). Finally we attach a $\theta$ step where possible to those walks we've just counted, hence the $(\hat\T_{* \theta})^\top$.
\end{proof}

% We now turn to extracting the asymptotics of $\mathbf{c}_m = \hat{\mathbf{c}}_m(1,1)$ and $t_m = \hat t_m(1,1)$ from the generating functions. (Note that this can all be done for general $(x,y)$.) For brevity we write $F_\theta \equiv F_\theta(1,1) \equiv F_\theta(t;1,1)$ and likewise for the other functions.

% \textbf{\color{red} the actual result for general $x,y$ is not really the point here... but how much simpler would any of this be if we restrict to $(1,1)$?}

Now, there are three possible sources of singularities in $F_\theta$ as written in~\eqref{eqn:F_with_AB}: singularities of $A_\theta$, singularities of $B_\theta$, and points where $B_\theta=1$. In the next lemma we show that the dominant singularity always arises from the third case. Note that by the results of \cref{ssec:eigenvalues}, the location and the nature of the dominant singularity are the same for all four directions.

\begin{lemma}\label{lem:domsing_B=1}
For fixed $x,y\in(0,\infty)$, there is a dominant singularity $\rho_\theta(x,y)$ of $F_\theta(t;x,y)$ at the smallest positive value of $t$ which solves $B_\theta(t;x,y)=1$. At this point $F_\theta(t;x,y)$ has a simple pole.
\end{lemma}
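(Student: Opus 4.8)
The plan is to locate the dominant singularity by comparing the growth rates of the coefficient sequences underlying $A_\theta$, $B_\theta$, and the formal series $\sum_{k\ge 1}B_\theta^k$, using the combinatorial interpretations from \cref{thm:fullplane_combinatorial} and the eigenvalue asymptotics of \cref{ssec:eigenvalues}. First I would fix $x,y\in(0,\infty)$ and observe that $A_\theta(t;x,y)$ and $B_\theta(t;x,y)$ are power series in $t$ with nonnegative coefficients (they enumerate sets of walks, weighted by positive quantities $x^a y^b$), and that $B_\theta(0;x,y)=0$ since a nonempty walk has at least one step. Hence $B_\theta$ is a continuous, strictly increasing function of $t$ on its interval of convergence, starting from $0$.

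Next I would identify the radius of convergence of $B_\theta$. Both $A_\theta$ and $B_\theta$ count subsets of walks that avoid $\theta$-steps except possibly at the very end; by \cref{lem:constructing_AF_BF} they are rational functions in $t$ whose denominators are $\det(\mathbf I-\mathbf I_\theta\hat\T^\top)$, i.e. governed by the spectral radius of the substochastic matrix $\mathbf I_\theta\hat\T$ rather than of $\hat\T$ itself. Since deleting $\theta$ from the step set strictly shrinks a connected, aperiodic model, the Perron root of $\mathbf I_\theta\hat\T(1,1)$ is strictly smaller than $\mu$, so the common radius of convergence $R$ of $A_\theta$ and $B_\theta$ satisfies $R>1/\mu$; more generally for each fixed $x,y$ one gets $R=R(x,y)>0$ strictly larger than the radius of convergence $\rho$ of $F_\theta(t;x,y)$, the latter being finite because $F_\theta$ enumerates all walks ending in $\theta$ and these grow like $\mu^m$ (times a Laurent polynomial in $x,y$ at fixed positive $x,y$) by \cref{ssec:eigenvalues}. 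I would then argue $B_\theta(t)\to B_\theta(R^-)$ exists in $(0,\infty]$; if it equals a value $\le 1$ then $1/(1-B_\theta)$ is analytic throughout $|t|<R$ and $F_\theta=A_\theta/(1-B_\theta)$ would be analytic on a disk strictly larger than $\{|t|<\rho\}$, contradicting that $\rho$ is the radius of convergence of $F_\theta$. Hence $B_\theta$ attains the value $1$ on $(0,R)$; let $\rho_\theta=\rho_\theta(x,y)$ be the least such $t$. By monotonicity $B_\theta(t)<1$ for $t\in[0,\rho_\theta)$, so $F_\theta=A_\theta/(1-B_\theta)$ is analytic there, and $\rho_\theta$ is therefore the dominant singularity; and since $\rho_\theta<R$, both $A_\theta$ and $B_\theta$ (and $B_\theta'$) are analytic at $\rho_\theta$.

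Finally, for the nature of the singularity: near $t=\rho_\theta$ write $1-B_\theta(t)=-B_\theta'(\rho_\theta)(t-\rho_\theta)+O((t-\rho_\theta)^2)$, so it suffices to show $B_\theta'(\rho_\theta)\ne 0$ and $A_\theta(\rho_\theta)\ne 0$. Strict monotonicity gives $B_\theta'(t)>0$ for $t>0$ (the series for $B_\theta$ has at least one positive coefficient, as the model is connected so some walk of length $\ge 1$ beginning in $f(\theta)$ ends in $\theta$ with exactly one $\theta$-step), hence $B_\theta'(\rho_\theta)>0$ and $1-B_\theta$ has a simple zero at $\rho_\theta$. For $A_\theta$, its constant term (in $t$) vanishes but its coefficient of $t^1$ is the monomial weight of the length-one walk consisting of a single $\theta$-step, which is one of $tx,ty,t\olx,t\oly$ evaluated at positive $x,y$ — strictly positive — and all higher coefficients are nonnegative, so $A_\theta(\rho_\theta)>0$. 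Therefore $F_\theta(t;x,y)=A_\theta/(1-B_\theta)$ has a simple pole at $t=\rho_\theta$. The main obstacle I anticipate is the strict inequality $\rho_\theta<R$ (equivalently, that removing one direction genuinely reduces the growth rate): this needs the connectedness and aperiodicity hypotheses together with a Perron–Frobenius comparison argument applied to $\mathbf I_\theta\hat\T$ versus $\hat\T$ at fixed positive $x,y$, rather than anything one can read off directly from the rational forms in \cref{lem:constructing_AF_BF}.
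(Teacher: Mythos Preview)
Your argument is correct, but it takes a different route from the paper's. The paper works directly with the rational form of $B_\theta$: since $B_\theta$ is a power series with nonnegative coefficients and (when not polynomial) has a pole at its radius of convergence $\beta_\theta$, it increases continuously from $0$ to $\infty$ on $(0,\beta_\theta)$ and therefore crosses $1$. The paper then shows $A_\theta$ and $B_\theta$ share the same radius of convergence by a purely combinatorial sandwich: $B_\theta\le A_\theta$ since $B_\theta$ counts a subset, and conversely each $A_{\phi\theta}$ embeds into $B_\theta$ by prepending a short $\theta$-avoiding bridge from $\theta$ to $\phi$ (which exists by connectedness), giving $\Gamma A_{\phi\theta}\le B_\theta$. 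This rules out any earlier singularity from $A_\theta$, and positivity of $A_\theta$ on $(0,\alpha_\theta)$ rules out cancellation.

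Your approach instead imports the eigenvalue asymptotics of \S\ref{ssec:eigenvalues}: you establish $R>\rho$ via a strict Perron--Frobenius comparison between $\hat\T_0(x,y)$ and the matrix obtained by zeroing the $\theta$-row, then argue by contradiction that $B_\theta$ must hit $1$ on $(0,R)$ because otherwise $F_\theta$ would be analytic on a disk of radius $R>\rho$. This is a legitimate and arguably more conceptual route; the ``obstacle'' you flag (strict decrease of the Perron root upon deleting a row of an irreducible nonnegative matrix) is a standard consequence of Perron--Frobenius for irreducible matrices. Two small points worth making explicit: to pass from $B_\theta<1$ on $[0,R)$ to analyticity of $1/(1-B_\theta)$ on the whole disk $|t|<R$ you need $|B_\theta(t)|\le B_\theta(|t|)$, which follows from nonnegativity of coefficients; and the asymptotics of \S\ref{ssec:eigenvalues} are stated at $x=y=1$, so you should note that the same Perron--Frobenius argument applies to $\hat\T_0(x,y)$ for any fixed $x,y>0$ since primitivity is preserved. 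The paper's concatenation argument avoids both of these by staying on the positive real axis and never invoking spectral comparisons.
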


\begin{proof}
By Pringsheim's Theorem~\cite[Thm. IV.6]{Flajolet2009Analytic} $F_\theta(t;x,y)$ must have a dominant singularity on the positive real axis, so we can restrict $t$ to that domain for the time being.

Since $B_\theta$ is a generating function of walks, it is a polynomial or a power series in $t$ whose coefficients are Laurent polynomials in $x$ and $y$. If it is a power series then its dominant singularity is a pole, say at $t=\beta_\theta(x,y)$. $B_\theta(t;x,y)$ is thus a continuous, monotone increasing function from zero at $t=0$ to infinity at $t=\beta_\theta(x,y)$. So the smallest point at which $B_\theta(t;x,y)=1$ must be closer to zero than $\beta_\theta(x,y)$. If instead $B_\theta$ is polynomial, it obviously has no poles, so in this case we will assign $\beta_\theta(x,y)=\infty$. It is clear that $\rho_\theta(x,y)>0$, since $B_\theta(0;x,y) = 0$ for $x,y\in(0,\infty)$.

There then remain two potential problems we need to rule out: the possibility that there is a zero of $A_\theta$ which cancels the pole at $\rho_\theta(x,y)$, and the possibility of $A_\theta$ having a singularity closer to 0 than $\rho_\theta$. We start with the second problem, and will demonstrate that $B_\theta$ and $A_\theta$ have the same radius of convergence. Let $\alpha_\theta(x,y)$ be the radius of convergence of $A_\theta(x,y)$.

Since every walk counted by $B_\theta$ is also counted by $A_\theta$, we have $B_\theta(t;x,y)\leq A_\theta(t;x,y)$ for $t<\min(\alpha_\theta(x,y), \beta_\theta(x,y))$. This immediately implies $\alpha_\theta(x,y)\leq\beta_\theta(x,y)$.

Let $A_{\phi\theta}(t;x,y)$ be the generating function of those walks counted by $A_\theta(t;x,y)$ which \emph{start} with step type $\phi$, so $A_\theta(t;x,y) = \sum_{\phi\in\{\text{n,e,s,w}\}}A_{\phi\theta}(t;x,y)$. Let $\alpha_{\phi\theta}(x,y)$ be its radius of convergence. By connectedness, there exists a (possibly empty) walk $\gamma$ which can follow a $\theta$ step, contains no $\theta$ steps itself and can be followed by a $\phi$ step.
Let $\Gamma$ be the monomial contribution to generating functions of such a walk, where we take $\Gamma=1$ if $\gamma$ is empty. Then for any walk $\sigma$ counted by $A_{\phi\theta}$, the concatenation $\gamma\circ\sigma$ is a walk counted by $B_\theta$. We thus have
\begin{equation}
  B_\theta(t;x,y) \geq \Gamma A_{\phi\theta}(t;x,y),
\end{equation}
and it follows that $\beta_\theta(x,y) \leq \alpha_{\phi\theta}(x,y)$. Repeating this process for all four values of $\phi$, we see
\begin{equation}
  \beta_\theta(x,y) \leq \min_\phi(\alpha_{\phi\theta}(x,y)) = \alpha_\theta(x,y)
\end{equation}
as desired.

As for the possibility of $\rho_\theta(x,y)$ being a zero of $A_\theta(t;x,y)$, this is now clearly impossible, as $0<\rho_\theta(x,y)<\alpha_\theta(x,y)$, and $A_\theta(t;x,y)$ (as a generating function) cannot possibly vanish on this interval.

To see that the dominant pole of $F_\theta(t;x,y)$ is simple, observe that the derivative of $B_\theta(t;x,y)$ is positive at $t=\rho_\theta(x,y)$. (It is a convergent power series with positive coefficients.)
\end{proof}

We note here that if $g(t)$ is an analytic function of $t$ around $t_0\neq 0$, and $t_0$ is an isolated simple zero of $1-g(t)$, then near $t=t_0$ we have
\begin{equation}
  \frac{1}{1-g(t)} \underset {t\to t_0}{\sim}\frac{1}{t_0 g'(t_0)}\cdot\frac{1}{1-t/t_0}.
\end{equation}
If $t_0$ is the unique singularity of $1/(1-g(t))$ of minimum absolute value, then it follows by singularity analysis that
\begin{equation}
  [t^m]\frac{1}{1-g(t)} \sim \frac{1}{t_0 g'(t_0)} \cdot t_0^{-m} \qquad\text{as }m\to\infty.
\end{equation}
Using \cref{lem:domsing_B=1} we can then formulate the asymptotics of ${\mathbf C}_m(x,y)$ in terms of the generating functions.

\begin{corollary}\label{cor:FP_asymps_from_gfs}
For a given aperiodic two-step rule $\R$, take $x,y\in(0,\infty)$, let $F_\theta(t;x,y)$ be one of the four individual generating functions (corresponding to step direction $\theta$) and let $\rho_\theta(x,y)$ be as defined in \cref{lem:domsing_B=1}. Define $\mu_\theta(x,y) = 1/\rho_\theta(x,y)$. Then with $\Theta_m(x,y)$ standing for the coefficient of $t^m$ in $F_\theta(t;x,y)$, we have
\begin{equation}\label{eqn:FP_asymps_aperiodic}
\Theta_m(x,y) \sim \mu_\theta(x,y)\cdot\frac{A_\theta(\rho_\theta(x,y);x,y)}{B_\theta^{(1,0,0)}(\rho_\theta(x,y);x,y)}\cdot \mu_\theta(x,y)^m \qquad \text{as }m\to\infty.
\end{equation}
\end{corollary}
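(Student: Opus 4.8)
The plan is to combine Corollary statement's hypotheses with the asymptotic transfer principle already recorded immediately before it. The key ingredients are all in place: Lemma~\ref{lem:domsing_B=1} tells us that $\rho_\theta(x,y)$ is a \emph{simple} pole of $F_\theta(t;x,y)=A_\theta/(1-B_\theta)$ and is the unique dominant singularity; the displayed remark after that lemma records that for a function of the form $1/(1-g(t))$ with an isolated simple zero of $1-g$ at $t_0\neq 0$ one has $[t^m]\,1/(1-g(t))\sim \big(t_0 g'(t_0)\big)^{-1}\,t_0^{-m}$. So the proof is essentially a bookkeeping exercise: apply that transfer with $g=B_\theta(\cdot\,;x,y)$ and $t_0=\rho_\theta(x,y)$, then multiply through by $A_\theta(\rho_\theta;x,y)$.

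First I would write $F_\theta(t;x,y)=A_\theta(t;x,y)\cdot \dfrac{1}{1-B_\theta(t;x,y)}$. Near $t=\rho_\theta(x,y)$ the factor $A_\theta(t;x,y)$ is analytic (this is exactly what the proof of Lemma~\ref{lem:domsing_B=1} establishes: $\alpha_\theta(x,y)=\beta_\theta(x,y)\geq\rho_\theta(x,y)$, and in fact $\rho_\theta<\alpha_\theta$ strictly) and nonzero there, so it contributes only the constant $A_\theta(\rho_\theta(x,y);x,y)$ to the leading singular behaviour. The remaining factor $1/(1-B_\theta)$ is handled by the cited transfer: since $\rho_\theta(x,y)$ is an isolated simple zero of $1-B_\theta(\cdot\,;x,y)$ and $B_\theta$ has positive coefficients (so $B_\theta^{(1,0,0)}(\rho_\theta;x,y)>0$, the derivative being with respect to $t$ in the notation $B_\theta^{(1,0,0)}$), we get
\begin{equation}
[t^m]\,\frac{1}{1-B_\theta(t;x,y)} \sim \frac{1}{\rho_\theta(x,y)\, B_\theta^{(1,0,0)}(\rho_\theta(x,y);x,y)}\cdot \rho_\theta(x,y)^{-m}.
\end{equation}
Multiplying by $A_\theta(\rho_\theta(x,y);x,y)$ and substituting $\mu_\theta(x,y)=1/\rho_\theta(x,y)$ turns $1/\rho_\theta(x,y)$ into $\mu_\theta(x,y)$ and $\rho_\theta(x,y)^{-m}$ into $\mu_\theta(x,y)^m$, giving exactly~\eqref{eqn:FP_asymps_aperiodic}.

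The one genuine point needing care — the \textbf{main obstacle}, such as it is — is justifying that multiplying the asymptotic expansion of $1/(1-B_\theta)$ by the analytic factor $A_\theta$ really does just multiply the leading term by $A_\theta(\rho_\theta;x,y)$, i.e.\ that singularity analysis is being applied legitimately. This follows from the standard transfer theorems of~\cite{Flajolet2009Analytic}: $F_\theta(t;x,y)$ is, by Lemma~\ref{lem:domsing_B=1}, meromorphic with a single simple pole at $\rho_\theta(x,y)$ in a disc of radius strictly larger than $\rho_\theta(x,y)$ (using $\rho_\theta<\alpha_\theta=\beta_\theta$), hence admits a meromorphic continuation past its dominant singularity, and the residue-based asymptotic $[t^m]F_\theta \sim -\mathrm{Res}_{t=\rho_\theta} F_\theta(t;x,y)\cdot \rho_\theta^{-m-1}$ applies; computing that residue as $-A_\theta(\rho_\theta;x,y)/B_\theta^{(1,0,0)}(\rho_\theta;x,y)$ and simplifying recovers the claimed formula. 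I would state this in one sentence and refer to~\cite[Thm.~IV.10]{Flajolet2009Analytic}, since the underlying computation is the elementary partial-fraction expansion already displayed in the remark preceding the corollary.
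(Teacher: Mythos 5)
Your proof is correct and is essentially the argument the paper intends: the paper gives no separate proof of this corollary, instead stating the general transfer fact for $1/(1-g(t))$ in the remark immediately preceding and leaving the reader to combine it with $F_\theta = A_\theta/(1-B_\theta)$ and Lemma~\ref{lem:domsing_B=1} exactly as you do. Your sign-checked residue computation and your explicit appeal to $\rho_\theta<\alpha_\theta=\beta_\theta$ (so $A_\theta$ is analytic and, by positivity, nonzero at $\rho_\theta$) are the right justifications for the bookkeeping step the paper leaves implicit.
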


\begin{corollary}\label{cor:rho_indep_of_F}
For a two-step rule $\R$ and step direction $\theta$ corresponding to the generating function $B_\theta(x,y)$, define $\rho_\theta(x,y)$ as per \cref{lem:domsing_B=1}. Then $\rho_\theta(x,y)$ is independent of $\theta$; that is,
\begin{equation}
  \rho_e(x,y) = \rho_n(x,y) = \rho_w(x,y) = \rho_s(x,y).
\end{equation}
\end{corollary}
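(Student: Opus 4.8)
The plan is to show that, for fixed $x,y\in(0,\infty)$, each of the four power series $F_\theta(t;x,y)=\sum_{m\ge 1}\Theta_m(x,y)t^m$ has the \emph{same} radius of convergence in $t$, and then to invoke \cref{lem:domsing_B=1}, which identifies that radius with $\rho_\theta(x,y)$.

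First I would recast the recursion of \cref{sec:defs} in terms of the partition functions: exactly as $\mathbf c_m=\mathbf c_1\cdot\T^{m-1}$ in~\eqref{eqn:cm_induction}, here $\mathbf C_m(x,y)=\mathbf C_1(x,y)\cdot\check\T^{\,m-1}$ with $\mathbf C_1(x,y)=(x,y,\olx,\oly)$ and $\check\T=\T\cdot\mathrm{diag}(x,y,\olx,\oly)=\hat\T/t$. For $x,y\in(0,\infty)$ the matrix $\check\T$ is nonnegative with the same zero pattern as $\T$, hence primitive because $\R$ is aperiodic. By the Perron--Frobenius theorem $\check\T$ has a simple, strictly dominant, positive eigenvalue $\mu(x,y)$ with strictly positive left and right eigenvectors. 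Since the entries of $\mathbf C_1(x,y)$ are all positive, its component along the dominant eigendirection is nonzero, so every entry of $\mathbf C_m(x,y)$ is asymptotic to a positive multiple of $\mu(x,y)^{m}$; in particular $\Theta_m(x,y)^{1/m}\to\mu(x,y)$ for each $\theta$. Hence each $F_\theta(t;x,y)$ has radius of convergence $1/\mu(x,y)$, a quantity that does not depend on $\theta$.

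To finish, recall from \cref{lem:domsing_B=1} that $F_\theta(t;x,y)$ has a (simple pole) singularity at $\rho_\theta(x,y)$ and none strictly closer to the origin. Since $F_\theta(\cdot;x,y)$ has nonnegative coefficients, Pringsheim's theorem forces its radius of convergence to be a singularity, and therefore the closest one; so $\rho_\theta(x,y)$ equals that radius of convergence, namely $1/\mu(x,y)$. As the right-hand side is independent of $\theta$, this gives $\rho_e(x,y)=\rho_n(x,y)=\rho_w(x,y)=\rho_s(x,y)$.

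The only point requiring a little care is the claim that the Perron--Frobenius projection constant is nonzero, but this is automatic: $\mathbf C_1(x,y)$ and the right dominant eigenvector of $\check\T$ both have strictly positive coordinates, so their inner product is positive. One can in fact bypass Perron--Frobenius altogether: by connectedness, for any two directions $\theta,\phi$ there is a walk of bounded length, ending in direction $\phi$, that may legally follow a $\theta$ step; concatenating it onto an arbitrary $\theta$-walk produces a $\phi$-walk and, tracking the fixed monomial change in the $(x,y)$-weight, shows that $\Theta_m(x,y)$ for $\theta$ and for $\phi$ have the same exponential growth rate, which again gives equality of the radii of convergence and hence of the $\rho_\theta(x,y)$.
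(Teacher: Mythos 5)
Your proof is correct and follows essentially the same route as the paper: by Perron--Frobenius applied to the weighted transfer matrix ($\hat\T/t$, for fixed $x,y>0$), the exponential growth rate of $\Theta_m(x,y)$ is a spectral radius that does not depend on $\theta$, and \cref{cor:FP_asymps_from_gfs} identifies this growth rate with $1/\rho_\theta(x,y)$. The paper's proof is a one-line citation of \cref{cor:FP_asymps_from_gfs} and \cref{ssec:eigenvalues}; you have merely made explicit the fact that primitivity of $\T$ carries over to the weighted matrix for positive $x,y$ and that $\mathbf C_1$ has positive overlap with the Perron eigenvector, which the paper leaves implicit.
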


\begin{proof}
\cref{cor:FP_asymps_from_gfs} and the results of \cref{ssec:eigenvalues} demonstrate that the exponential growth rate $\mu_\theta(x,y) = 1/\rho_\theta(x,y)$ for $\Theta_m(x,y)$ is independent of the step direction $\theta$.
\end{proof}

In light of \cref{cor:rho_indep_of_F}, we will henceforth just write $\rho(x,y)$ instead of $\rho_\theta(x,y)$.

\subsection{Location of the endpoint}\label{ssec:endpoint}

% \textbf{\color{red} again, do we really want to bother with general $(x,y)$? do we actually care about weighting the steps? some of the ``phase boundaries'' do look interesting, but maybe best to keep as short as possible?}

Before moving on to the half plane geometry, we consider the behaviour of the endpoint of walks of length $m$, and in particular the \emph{expected coordinates} of the endpoint of a walk in the limit $m\to\infty$.

For a given connected rule $\R$ and $x,y\in(0,\infty)$, we define a Boltzmann distribution on the walks of length $m$ ending with step type $\theta$. If a walk $\gamma$ ends at $(a,b)$, then
\begin{equation}\label{eqn:boltzmann_probability}
    \mathbb P_{m}(\gamma;x,y) = \frac{x^a y^b}{\Theta_m(x,y)}.
\end{equation}
(We can similarly define the distribution on walks of length $m$ ending with any step type, by replacing $\Theta_m$ with $P_m$.) Note that setting $x=y=1$ gives the uniform distribution on walks of length $m$ ending with step type $\theta$, but for the rest of this section we will continue to take $x,y\in(0,\infty)$.

The expected $\mathbf x$-coordinate of the endpoint of a walk of length $m$ with respect to the distribution~\eqref{eqn:boltzmann_probability} is then
\begin{equation}\label{eqn:expected_xco_fp}
\mathbb{E}_m(\mathbf{x};x,y) = \frac{\sum_{a,b} a \theta_{m,a,b} x^a y^b}{\Theta_m(x,y)} = \frac{x\frac{\partial}{\partial x}\Theta_m(x,y)}{\Theta_m(x,y)} = \frac{x[t^m]\frac{\partial}{\partial x}F_\theta(t;x,y)}{[t^m]F_\theta(t;x,y)},
\end{equation}
and we likewise have
\begin{equation}\label{eqn:expected_yco_fp}
\mathbb{E}_m(\mathbf{y};x,y) = \frac{y[t^m]\frac{\partial}{\partial y}F_\theta(t;x,y)}{[t^m]F_\theta(t;x,y)}
\end{equation}
for the expected $\mathbf y$-coordinate.

We have already determined the asymptotic behaviour of the denominators of~\eqref{eqn:expected_xco_fp} and~\eqref{eqn:expected_yco_fp}. It remains to determine the asymptotics of the numerators. We have
\begin{equation}
    \frac{\partial}{\partial x}F_\theta(t;x,y) = \frac{A_\theta^{(0,1,0)}(t;x,y)}{1-B_\theta(t;x,y)} + \frac{A_\theta(t;x,y)B_\theta^{(0,1,0)}(t;x,y)}{(1-B_\theta(t;x,y))^2}.
\end{equation}
Our primary interest is in the dominant asymptotic behaviour, which comes from the double pole. The singularity of interest is $t=\rho(x,y)$, and near that point we have
\begin{equation}\label{eqn:expanding_Fx}
\frac{\partial}{\partial x}F_\theta(t;x,y) \underset {t\to \rho(x,y)}{\sim} \frac{A_\theta(\rho;x,y) B_\theta^{(0,1,0)}(\rho;x,y)}{\rho^2 B_\theta^{(1,0,0)}(\rho;x,y)}\cdot\frac{1}{(1-t/\rho)^2}
\end{equation}
where we denote $\rho\equiv \rho(x,y)$ for short.

However,~\eqref{eqn:expanding_Fx} becomes useless when $B_\theta^{(0,1,0)}(\rho(x,y);x,y) = 0$. In that case $\frac{\partial}{\partial x}F_\theta(t;x,y)$ does not have a double pole, and we look instead to the next term in the Laurent series:
\begin{equation}\label{eqn:expanding_Fx_drift0}
\frac{\partial}{\partial x}F_\theta(t;x,y) \underset {t\to \rho}{\sim} \left(\frac{A_\theta^{(0,1,0)}(\rho;x,y)}{\rho B_\theta^{(1,0,0)}(\rho;x,y)} - \frac{A_\theta(\rho;x,y)B_\theta^{(1,1,0)}(\rho;x,y)}{\rho B_\theta^{(1,0,0)}(\rho;x,y)^2}\right)\cdot\frac{1}{1-t/\rho}.
\end{equation}

From~\eqref{eqn:expanding_Fx} and~\eqref{eqn:expanding_Fx_drift0} we can then determine the asymptotic behaviour of the coefficients of $\frac{\partial}{\partial x}F_\theta(t;x,y)$, and this can then be combined with~\eqref{eqn:expected_xco_fp} to give the expected $\mathbf x$-coordinate of the endpoint.

\begin{lemma}\label{lem:endpoint_xco_fp}
If $B_\theta^{(0,1,0)}(\rho(x,y);x,y) \neq 0$ then $\mathbb{E}_m(\mathbf{x};x,y)$ behaves asymptotically as
\begin{equation}\label{eqn:xco_asymp}
\mathbb{E}_m(\mathbf{x};x,y) \sim x\delta_{\mathbf x}\cdot m \qquad \text{as }m\to\infty,
\end{equation}
where
\begin{equation}\label{eqn:defining_deltax}
\delta_{\mathbf x} \equiv \delta_{\mathbf x}(x,y) = \frac{B_\theta^{(0,1,0)}(\rho (x,y);x,y)}{\rho (x,y)B_\theta^{(1,0,0)}(\rho (x,y);x,y)}.
\end{equation}
If instead $B_\theta^{(0,1,0)}(\rho(x,y);x,y) = 0$ then
\begin{equation}\label{eqn:xco_asymp_drift0}
\mathbb{E}_m(\mathbf{x};x,y) \sim x\left(- \frac{B_\theta^{(1,1,0)}(\rho;x,y)}{B_\theta^{(1,0,0)}(\rho;x,y)} + \frac{A_\theta^{(0,1,0)}(\rho;x,y)}{A_\theta(\rho;x,y)}\right).
\end{equation}
\end{lemma}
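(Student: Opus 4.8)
The plan is to substitute Laurent expansions of $F_\theta$ and of $\partial_x F_\theta$ about their common dominant singularity $t=\rho(x,y)$ into the exact identity \eqref{eqn:expected_xco_fp}, apply singularity analysis to the $t^m$-coefficients of numerator and denominator separately, and take the ratio. The denominator is already handled: by \cref{lem:domsing_B=1}, $F_\theta=A_\theta/(1-B_\theta)$ has a simple pole at the least positive root $\rho=\rho(x,y)$ of $B_\theta(t;x,y)=1$, with $A_\theta,B_\theta$ (hence their $x$- and $t$-derivatives) analytic for $|t|<\alpha_\theta(x,y)$ where $\alpha_\theta>\rho$, and with $A_\theta(\rho)>0$ and $B_\theta^{(1,0,0)}(\rho)>0$; aperiodicity makes $\rho$ the unique singularity of minimal modulus (exactly as in the proof of \cref{cor:FP_asymps_from_gfs}), so $[t^m]F_\theta\sim \frac{A_\theta(\rho)}{\rho B_\theta^{(1,0,0)}(\rho)}\,\rho^{-m}$.

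For the numerator I would start from $\partial_x F_\theta=\frac{A_\theta^{(0,1,0)}}{1-B_\theta}+\frac{A_\theta B_\theta^{(0,1,0)}}{(1-B_\theta)^2}$, use $1-B_\theta(t)=\rho B_\theta^{(1,0,0)}(\rho)(1-t/\rho)\bigl(1+O(1-t/\rho)\bigr)$ near $t=\rho$, and split on the sign of $B_\theta^{(0,1,0)}(\rho;x,y)$. In Case 1, $B_\theta^{(0,1,0)}(\rho)\neq 0$: the second term has a genuine double pole at $\rho$, which dominates the simple poles coming from the first term and from the $(1-t/\rho)^{-1}$ part of the second, so $\partial_x F_\theta$ has the double-pole behaviour recorded in \eqref{eqn:expanding_Fx}; since $[t^m](1-t/\rho)^{-2}=(m+1)\rho^{-m}\sim m\rho^{-m}$, this makes $[t^m]\partial_x F_\theta$ grow like $m\rho^{-m}$ times that coefficient. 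Forming $x\cdot\frac{[t^m]\partial_x F_\theta}{[t^m]F_\theta}$ then cancels the $A_\theta(\rho)$ factors, a routine simplification collapses the $B_\theta^{(1,0,0)}(\rho)$ powers, and one is left with exactly $x\delta_{\mathbf x}m$ for $\delta_{\mathbf x}$ as in \eqref{eqn:defining_deltax}, which is \eqref{eqn:xco_asymp}.

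In Case 2, $B_\theta^{(0,1,0)}(\rho)=0$: now $B_\theta^{(0,1,0)}(t)=B_\theta^{(1,1,0)}(\rho)(t-\rho)+O((t-\rho)^2)$, so one order of the pole is removed and $\partial_x F_\theta$ has only a simple pole at $\rho$; expanding both summands to the $(1-t/\rho)^{-1}$ order recovers \eqref{eqn:expanding_Fx_drift0}, hence $[t^m]\partial_x F_\theta$ is asymptotic to the bracketed constant there times $\rho^{-m}$. Dividing by $[t^m]F_\theta$ and multiplying by $x$ again cancels $A_\theta(\rho)$ and $B_\theta^{(1,0,0)}(\rho)$ and produces precisely \eqref{eqn:xco_asymp_drift0}; the degenerate sub-case $B_\theta^{(1,1,0)}(\rho)=0$ needs no separate argument, since then the $B$-term of \eqref{eqn:expanding_Fx_drift0} vanishes identically.

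The one point I would be careful about — and the main obstacle — is justifying that the pole at $\rho$ (double in Case 1, simple in Case 2) is isolated and of strictly smallest modulus among \emph{all} singularities of $\partial_x F_\theta$, so that the transfer theorem applies with no competing contributions and every discarded term is genuinely $o(m\rho^{-m})$ (Case 1) or $o(\rho^{-m})$ (Case 2). This reduces to two facts already in hand: $A_\theta,B_\theta$ and their derivatives converge in a disk strictly larger than $\{|t|\le\rho\}$ by \cref{lem:domsing_B=1}, so the only singularities there are zeros of $1-B_\theta$; and primitivity of $\T$ (aperiodicity) forces $\rho$ to be the unique such zero of minimal modulus. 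Granting these, the rest is routine Laurent-series bookkeeping.
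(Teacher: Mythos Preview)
Your proposal is correct and follows essentially the same approach as the paper: compute the Laurent expansion of $\partial_x F_\theta$ about the simple pole $t=\rho$ (equations~\eqref{eqn:expanding_Fx} and~\eqref{eqn:expanding_Fx_drift0} in the paper), apply singularity analysis to numerator and denominator separately, and take the ratio. The paper's own proof is a two-sentence sketch saying exactly this; you have simply filled in the bookkeeping and been more explicit about why $\rho$ is the unique dominant singularity of $\partial_x F_\theta$, which the paper leaves implicit.
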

\begin{proof}
We simply take the ratio of the dominant asymptotic behaviours of $\frac{\partial}{\partial x}F_\theta(t;x,y)$ and $F_\theta(t;x,y)$. The exponential growth rate is the same for both and thus cancels; \eqref{eqn:xco_asymp} and~\eqref{eqn:xco_asymp_drift0} are then just the ratios of the subexponential factors.
\end{proof}

The result for the expected $\mathbf y$-coordinate $\mathbb{E}_m(\mathbf{y};x,y)$ is essentially identical, with the factor of $x$ and the derivatives with respect to $x$ changing to $y$ as appropriate.

As they are written, \eqref{eqn:defining_deltax} and~\eqref{eqn:xco_asymp_drift0} appear to depend on the choice of $\theta$ -- that is, the asymptotic behaviour of $\mathbb{E}_m(\mathbf{x};x,y)$ (and $\mathbb{E}_m(\mathbf{y};x,y)$) seems to depend on the direction of the final step of a walk. In fact~\eqref{eqn:defining_deltax} is independent of the choice of $\theta$.

\begin{lemma}\label{lem:delta_ind_direction}
Write $\mu(x,y) = 1/\rho (x,y)$, where $\rho (x,y)$ is the smallest positive root of $B_\theta(t;x,y)=1$. (This is the same for any choice of $\theta$.) Then
\begin{equation}\label{eqn:deltax_defn_fromP}
    \delta_{\mathbf x}(x,y) = \frac{\frac{\partial}{\partial x}\mu(x,y)}{\mu(x,y)} = \frac{\partial}{\partial x}\log \mu(x,y)
\end{equation}
and
\begin{equation}\label{eqn:deltay_defn_fromP}
    \delta_{\mathbf y}(x,y) = \frac{\frac{\partial}{\partial y}\mu(x,y)}{\mu(x,y)} = \frac{\partial}{\partial y}\log \mu(x,y).
\end{equation}
\end{lemma}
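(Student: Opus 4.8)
The plan is to prove the identity by implicit differentiation of the relation $B_\theta(\rho(x,y);x,y) = 1$, which holds identically on $(0,\infty)^2$ by the very definition of $\rho$ in \cref{lem:domsing_B=1}. The first step is to check that $\rho(x,y)$ is a differentiable — indeed analytic — function of $x$ and $y$ on $(0,\infty)^2$, so that the chain rule applies. For this I would invoke the analytic implicit function theorem: near any fixed $(x_0,y_0)\in(0,\infty)^2$, the function $B_\theta(t;x,y)$ is analytic in $(t,x,y)$ in a neighbourhood of $\big(\rho(x_0,y_0),x_0,y_0\big)$ — its coefficients as a series in $t$ are Laurent polynomials in $x,y$, and $\rho(x_0,y_0)$ lies strictly inside the radius of convergence $\alpha_\theta(x_0,y_0)$, since $0<\rho<\alpha_\theta$ was established in the proof of \cref{lem:domsing_B=1} — and moreover $\partial_t B_\theta = B_\theta^{(1,0,0)}$ is strictly positive there, again as noted in that proof (a convergent power series in $t$ with nonnegative, not-all-zero coefficients, evaluated at a positive argument). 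Hence there is a unique analytic branch $t=\rho(x,y)$ of the equation $B_\theta=1$ through $(x_0,y_0)$; since $\rho$ was independently characterised as the smallest positive root, this branch must coincide with $\rho$.

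The second step is the differentiation itself. Differentiating $B_\theta(\rho(x,y);x,y)=1$ with respect to $x$ (with $y$ fixed) gives
\[
B_\theta^{(1,0,0)}(\rho;x,y)\cdot\partial_x\rho(x,y) + B_\theta^{(0,1,0)}(\rho;x,y) = 0,
\]
so $\partial_x\rho = -B_\theta^{(0,1,0)}(\rho;x,y)/B_\theta^{(1,0,0)}(\rho;x,y)$, the denominator being nonzero by the previous paragraph. Writing $\mu=1/\rho$ we have $\partial_x\mu = -\rho^{-2}\partial_x\rho$, so
\[
\frac{\partial_x\mu(x,y)}{\mu(x,y)} = -\frac{1}{\rho(x,y)}\,\partial_x\rho(x,y) = \frac{B_\theta^{(0,1,0)}(\rho;x,y)}{\rho(x,y)\,B_\theta^{(1,0,0)}(\rho;x,y)} = \delta_{\mathbf x}(x,y),
\]
using the definition \eqref{eqn:defining_deltax}, and $\partial_x\mu/\mu=\partial_x\log\mu$ gives \eqref{eqn:deltax_defn_fromP}. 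The argument for $\delta_{\mathbf y}$ and \eqref{eqn:deltay_defn_fromP} is word-for-word the same with $x$ replaced by $y$.

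Finally I would observe that this computation simultaneously settles the remark preceding the lemma, namely that $\delta_{\mathbf x}$ (and $\delta_{\mathbf y}$) does not depend on the choice of the final-step direction $\theta$: by \cref{cor:rho_indep_of_F} the root $\rho(x,y)$, hence $\mu(x,y)=1/\rho(x,y)$, is the same for all four choices of $\theta$, so the right-hand side $\partial_x\log\mu(x,y)$ is manifestly $\theta$-independent, and therefore so is the left-hand side $\delta_{\mathbf x}$. Note also that no case distinction is needed: the drift-zero situation $B_\theta^{(0,1,0)}(\rho;x,y)=0$ is simply the statement that $\mu$ has a critical point in $x$ there, and the identity continues to hold verbatim. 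I do not expect any genuine obstacle; the only point that requires care is the verification that $\rho$ depends smoothly on $(x,y)$, i.e. checking the hypotheses of the (analytic) implicit function theorem, and this is supplied directly by the two facts $0<\rho<\alpha_\theta$ and $B_\theta^{(1,0,0)}(\rho;x,y)>0$ already proved in \cref{lem:domsing_B=1}.
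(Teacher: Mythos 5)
Your proposal is correct and follows essentially the same route as the paper: implicit differentiation of $B_\theta(\rho(x,y);x,y)=1$ (the paper writes this as $B_\theta(1/\mu;x,y)=1$ and differentiates in $\mu$ directly, which is algebraically the same), justified by the implicit function theorem using $0<\rho<\alpha_\theta$ and $B_\theta^{(1,0,0)}(\rho;x,y)>0$ from \cref{lem:domsing_B=1}. Your added observations — that the formula immediately settles $\theta$-independence via \cref{cor:rho_indep_of_F}, and that the drift-zero case needs no separate treatment since both sides of \eqref{eqn:deltax_defn_fromP} simply vanish there — are correct and harmless refinements, not a different argument.
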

The quantities $\delta_{\mathbf x}$ and $\delta_{\mathbf y}$ are called the \emph{horizontal} and \emph{vertical drifts}.
\begin{proof}
For $x,y\in(0,\infty)$, the function $\rho (x,y)$ is well-defined (by Lemma~\ref{lem:domsing_B=1}), and is a continuously differentiable function of $x$ and $y$ by the implicit function theorem. It is strictly positive, since $B_\theta(0;x,y) = 0$. Thus $\mu(x,y)$ is well-defined and continuously differentiable.

For the horizontal drift, we have
\begin{equation}
    B_\theta(1/\mu(x,y);x,y) = 1 \implies \frac{\partial}{\partial x} B_\theta(1/\mu(x,y);x,y) = 0.
\end{equation}
Applying the chain rule to the RHS,
\begin{equation}
    B_\theta^{(0,1,0)}(1/\mu(x,y);x,y) - \frac{B_\theta^{(1,0,0)}(1/\mu(x,y);x,y)\frac{\partial}{\partial x}\mu(x,y)}{\mu(x,y)^2}=0.
\end{equation}
Rearranging,
\begin{equation}
    \frac{\mu(x,y)B_\theta^{(0,1,0)}(1/\mu(x,y);x,y)}{B_\theta^{(1,0,0)}(1/\mu(x,y);x,y)} = \delta_{\mathbf x}(x,y) = \frac{\frac{\partial}{\partial x}\mu(x,y)}{\mu(x,y)}
\end{equation}
as required. The proof for $\delta_{\mathbf y}$ is analogous.
\end{proof}

\section{Enumeration in the upper half plane}\label{sec:halfplane}

We now consider those walks obeying two-step rules which start at the origin and remain in the upper half plane. Things become more complicated here, but much of the machinery we set up in \cref{sec:fullplane} will be useful. 

We first need to introduce a new sub-classification. A model is \emph{north-bound} if there is always a north step between each pair of south steps. Similarly, it is \emph{south-bound} if there is always a south step between each pair of north steps. If a model is neither north- nor south-bound, it is \emph{vertically unbounded}.

Observe that, since we allow walks to start in any direction, a north-bound walk can never step below $\mathbf{y} = -1$, and a south-bound walk can never step above $\mathbf{y} = 1$. This means that restricting such models to the upper half plane is not particularly interesting: north-bound models are almost confined to the upper half plane anyway, while south-bound models would be confined to the strip $0\leq \mathbf{y} \leq 1$. The generating functions of both types of models in the upper half plane are still rational.

We thus focus on vertically unbounded models, for which the restriction to the upper half plane is non-trivial. Let $\mathcal{V}$ denote the set of vertically unbounded models.

\subsection{Isomorphisms in the half plane}\label{ssec:isomorphisms_hp}

In the full plane we were able to apply any permutation to the step set of a walk obeying a rule to obtain another walk obeying a (possibly different) rule. In general this is not the case for walks restricted to the upper half plane, as a walk may end up stepping below the $\mathbf x$-axis after its steps are transformed. There is only one non-trivial permutation which is guaranteed to keep a walk in the upper half plane: the one which swaps east and west steps. (For some directed or disconnected rules there may be other valid permutations, but they don't seem to apply to the connected rules we study here.)

We of course first need to generate the set $\mathcal V$. This works in a similar way to the generation of the set $\mathcal C$. Note that a rule is north-bound iff there is no value of $k\geq1$ such that $((\mathbf I_n \T \mathbf I_n)^k)_{ss}>0$, and is south-bound iff there is no value of $k$ such that $((\mathbf I_s \T \mathbf I_s)^k)_{nn}>0$. Furthermore, in each case we only have to check $k=1,2,3$, as the shortest path $s\to s$ (resp. $n\to n$) which contains no north (resp. south) step either doesn't exist or has length at most three. 

We arrive at the following.

\begin{lemma}\label{lem:isos_hp}
The number of vertically unbounded (connected) models, as well as vertically unbounded and aperiodic, are
\begin{equation}
    |\mathcal{V}| = 19328 \qquad \text{and} \qquad |\mathcal{V}\cap\mathcal{A}| = 19285.
\end{equation}
Let $G_2$ denote the group acting on a set of two-step rules $X$, comprising the identity $e$ and the element $\sigma$ which swaps east and west steps. Then
\begin{equation}
    N_{G_2}(\mathcal V) = 9744 \qquad \text{and} \qquad N_{G_2}(\mathcal V \cap \mathcal A) = 9722.
\end{equation}
\end{lemma}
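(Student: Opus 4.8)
The plan is to mirror the computational strategy already used for Lemmas \ref{lem:number_of_connected}, \ref{lem:number_of_aperiodic} and \ref{lem:fp_num_noniso}, reducing everything to explicit enumeration over the $2^{16}$ transfer matrices plus an application of Burnside's lemma for the last two counts. First I would set up the characterisation of $\mathcal V$ that was announced in the preamble to the statement: a connected rule is north-bound iff $((\mathbf I_n\T\mathbf I_n)^k)_{ss}=0$ for all $k\geq 1$, and south-bound iff $((\mathbf I_s\T\mathbf I_s)^k)_{nn}=0$ for all $k\geq 1$, and $\mathcal V$ consists of the connected rules that are neither. I would then justify that it suffices to check $k\in\{1,2,3\}$ in each case: a walk realising a cycle $s\to\cdots\to s$ using no north step visits only vertices in $\{\text{e},\text{s},\text{w}\}$, so if such a closed walk exists a shortest one has length at most $3$ (any longer one repeats a vertex and can be shortened), and symmetrically for $n\to n$ avoiding south. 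Hence membership in $\mathcal V$ is decidable by evaluating at most a handful of matrix powers, and one simply loops over $\mathcal C$ (already generated for Lemma \ref{lem:fp_num_noniso}) to obtain $|\mathcal V|=19328$; intersecting with the already-computed set $\mathcal A$ gives $|\mathcal V\cap\mathcal A|=19285$.

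For the isomorphism counts I would invoke Burnside's lemma with the two-element group $G_2=\{e,\sigma\}$, where $\sigma$ interchanges east and west. The key point to verify is that $G_2$ genuinely acts on $\mathcal V$ and on $\mathcal V\cap\mathcal A$: swapping the east and west labels is a graph automorphism-type relabelling of the transfer matrix, so it preserves connectedness and primitivity exactly as the full $S_4$-action did in \cref{ssec:isomorphisms_full}; and it clearly preserves the north-bound / south-bound dichotomy since it fixes the north and south labels, hence it preserves $\mathcal V$. Then
\begin{equation}
N_{G_2}(\mathcal V)=\tfrac12\bigl(f_{\mathcal V}(e)+f_{\mathcal V}(\sigma)\bigr)=\tfrac12\bigl(|\mathcal V|+f_{\mathcal V}(\sigma)\bigr),
\end{equation}
and similarly for $\mathcal V\cap\mathcal A$, where $f_{\mathcal V}(\sigma)$ counts the rules in $\mathcal V$ fixed by the east-west swap (equivalently, transfer matrices invariant under simultaneously transposing rows/columns $1$ and $3$). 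From $|\mathcal V|=19328$ and $N_{G_2}(\mathcal V)=9744$ one reads off $f_{\mathcal V}(\sigma)=160$, which is consistent with the symmetric count, and likewise for the aperiodic subfamily. In practice these fixed-point counts are produced by the same \textsc{Mathematica} routine that generated the sets, so the proof just records that the group action and the decidability criteria are correct and defers the arithmetic to the code.

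The honest answer about the main obstacle is that there is no deep obstacle: everything is a finite check, and the only things requiring genuine argument are (i) the bound $k\leq 3$ for detecting north-/south-boundedness, which follows from the shortest-closed-walk argument above, and (ii) the claim that $G_2$ is the \emph{only} relevant symmetry for connected rules, i.e. that no other permutation of $\{\text{e},\text{n},\text{w},\text{s}\}$ preserves the upper half plane for all connected vertically unbounded models. The latter is the slightly delicate point, and I would treat it as in \cref{ssec:isomorphisms_hp}: a permutation must fix the half-plane constraint $\mathbf y\geq 0$, so it must preserve the distinction between the ``vertical'' steps $\{\text{n},\text{s}\}$ and the ``horizontal'' steps $\{\text{e},\text{w}\}$ and cannot swap north with south (that would flip the half plane); the only remaining non-identity option is the east-west swap $\sigma$. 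I would note parenthetically, as the text already does, that disconnected or directed rules can admit extra accidental symmetries, but these do not arise among the connected rules under study, so restricting to $G_2$ is justified. With that in hand, Burnside's lemma and the code output give the four stated numbers.
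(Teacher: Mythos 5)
Your proposal is correct and follows essentially the same route as the paper: characterise north-/south-boundedness via $(\mathbf I_n\T\mathbf I_n)^k$ and $(\mathbf I_s\T\mathbf I_s)^k$ with the shortest-closed-walk bound $k\leq 3$, generate $\mathcal V$ computationally, and apply Burnside's lemma with $G_2=\{e,\sigma\}$ after observing that the east--west swap is the only step permutation preserving the upper half plane (and that it preserves connectedness, primitivity, and vertical unboundedness). The fixed-point consistency check $f_{\mathcal V}(\sigma)=160$ is a nice sanity test the paper doesn't spell out, but otherwise this matches the paper's argument.
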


\subsection{Functional equations}\label{ssec:hp_fes}

In the upper half-plane we denote the partition function of walks of length $m$ ending with a $\theta$ step by $\Theta_m^+(x,y)$, and we will write the corresponding generating function as
\begin{equation}
    H_\theta(t;x,y) = \sum_m \Theta^+_m(x,y) t^m.
\end{equation}

In the upper half plane walks cannot start with a south step, and they cannot take a south step from a vertex on the $\mathbf x$-axis. Thus the system of equations~\eqref{eqn:fp_system_matrices} becomes
\begin{equation}\label{eqn:hp_system_matrices}
    (\mathbf{I}-\hat\T^\top)\cdot\begin{pmatrix}H_e(x,y) \\ H_n(x,y) \\ H_w(x,y) \\ H_s(x,y)\end{pmatrix} = \begin{pmatrix}tx \\ ty \\ t\olx \\ 0\end{pmatrix} - (\mathbf I - \mathbf I_s)\hat\T\cdot\begin{pmatrix}H_e(x,0) \\ H_n(x,0) \\ H_w(x,0) \\ H_s(x,0)\end{pmatrix}.
\end{equation}
Since $(\mathbf{I}-\hat\T^\top)$ is always invertible, this then reduces to a set of four equations, each of the form
\begin{equation}\label{eqn:hp_eqn_before_multiplying}
    H_\theta(x,y) = X_\theta(x,y) - Z_\theta(x,y)H^*(x,0),
\end{equation}
where $X_\theta$ and $Z_\theta$ are rational functions and $H^*(x,y)$ is the sum of some or all of the $H_\theta(x,y)$, namely, the sum of the generating functions of walks ending with steps which can be followed by a south step. Specifically,
\begin{equation}
    X_\theta(x,y) = V_\theta\cdot(\mathbf{I}-\hat\T^\top)^{-1}\cdot \begin{pmatrix} tx \\ ty \\ t\olx \\ 0 \end{pmatrix}
\end{equation}
and
\begin{equation}
    Z_\theta(t,y) = V_\theta\cdot(\mathbf{I}-\hat\T^\top)^{-1}\cdot \begin{pmatrix} 0 \\ 0 \\ 0 \\ t\oly \end{pmatrix}.
\end{equation}
It is straightforward to see that $X_\theta$ is the generating function of walks in the full plane which start with anything except a south step and end with a $\theta$ step, while $Z_\theta$ is the generating function of walks in the full plane which start with a south step and end with a $\theta$ step. 

Alternatively, we can use a combinatorial construction like \cref{thm:fullplane_combinatorial}.

\begin{theorem}\label{thm:halfplane_combinatorial}
The generating functions $H_\theta(x,y)$ each satisfy a functional equation of the form
\begin{equation}
H_\theta(x,y) = C_\theta(x,y) + B_\theta(x,y)H_\theta(x,y) - D_\theta(x,y)H^*(x,0),
\end{equation}
where $H^*(x,y)$ is as defined above,
\begin{itemize}
    \item $C_\theta$ is like $A_\theta$ in \cref{thm:fullplane_combinatorial}, except it only counts walks which do not start with a south step, and
    \item $D_\theta$ is like $A_\theta$ in \cref{thm:fullplane_combinatorial}, except it only counts walks which do start with a south step.
\end{itemize}
\end{theorem}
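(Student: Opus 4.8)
The plan is to adapt the combinatorial decomposition of \cref{thm:fullplane_combinatorial} to the upper half plane by tracking the extra structure introduced by the $\mathbf x$-axis: namely that a south step can be taken only from a vertex strictly above the axis, and that walks must start above (or on) the axis. As in the full-plane case, I would classify a walk $\gamma$ ending in a $\theta$ step according to how many $\theta$ steps it contains. If it contains exactly one $\theta$ step (necessarily its last), then it is counted by a series that is like $A_\theta$ but restricted to walks legal in the half plane; splitting this series according to whether the walk starts with a south step or not gives the two pieces $C_\theta$ and $D_\theta$ as stated. Here the subtlety is that a walk counted by $D_\theta$ begins with a south step, and such a step can only be taken from a vertex on the $\mathbf x$-axis — but since we allow walks to start in any direction regardless of the rule, the first south step is always permitted; what matters is that after that first south step the walk is at height $-1$ and must climb back, so $D_\theta$ is naturally expressed by attaching the rest of the walk (a half-plane walk ending at height $0$, contributing $H^*(x,0)$) below the axis and shifting. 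This is exactly why the term $-D_\theta(x,y)H^*(x,0)$ appears with the specialization $y\mapsto 0$ and with a minus sign: it is the usual kernel-method bookkeeping, subtracting off the walks whose south step would have left the half plane.

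If instead $\gamma$ contains more than one $\theta$ step, I would decompose it at its second-to-last $\theta$ step into (i) an initial segment up to and including that step — any half-plane walk ending in $\theta$, counted by $H_\theta(x,y)$ — followed by (ii) the remaining steps, which form a walk starting from a direction in $f(\theta)$ containing exactly one $\theta$ step (its last). The second factor is counted by the \emph{same} series $B_\theta$ as in the full plane, because the catch-up segment that produced $B_\theta$ there involved no south-from-the-axis issue that is not already recorded by the height of the endpoint of segment (i): the point is that once segment (i) has brought the walk to some height $\geq 0$, concatenating any full-plane $B_\theta$-walk to it can only fail to stay in the half plane if that $B_\theta$-walk contains a south step taken from the axis — but every such configuration is precisely what gets removed by the global $-D_\theta(x,y)H^*(x,0)$ correction, since $H^*(x,0)$ already enumerates all half-plane walks touching the axis and ending on it. Making this cancellation bookkeeping precise — i.e. checking that the over-count from reusing $B_\theta$ verbatim is exactly $D_\theta(x,y)H^*(x,0)$, with no double-subtraction and no missing terms — is the step I expect to be the main obstacle; the cleanest way to nail it is probably to derive the claimed equation directly from the matrix system~\eqref{eqn:hp_system_matrices} by the same linear-algebra manipulation that produced \cref{lem:constructing_AF_BF}, and then read off the combinatorial meaning of each resulting series, rather than arguing purely bijectively.

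Concretely, I would start from~\eqref{eqn:hp_system_matrices}, isolate the $\theta$ component exactly as in the proof of \cref{thm:fullplane_combinatorial} by removing all references to $F_\theta$ (here $H_\theta$) from the system and solving the reduced system, using $\mathbf I_\theta$ and the invertibility of $\mathbf I - \mathbf I_\theta \hat\T^\top$ established earlier. This yields $H_\theta = C_\theta + B_\theta H_\theta - D_\theta H^*(x,0)$ with $B_\theta$ literally the same rational function as in \cref{lem:constructing_AF_BF}, and with $C_\theta$ (resp. $D_\theta$) obtained from the $A_\theta$ construction by replacing the inhomogeneous vector $(tx, ty, t\olx, t\oly)^\top$ with $(tx, ty, t\olx, 0)^\top$ (resp. $(0,0,0,t\oly)^\top$). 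One then checks that these two rational functions have exactly the claimed combinatorial interpretations — each counts walks with a single $\theta$ step at the end, one forbidding and the other forcing an initial south step — which follows from the same "walk-before-the-first-$\theta$-step / attach-a-$\theta$-step / allow-starting-with-$\theta$" reading used for $A_\theta$, now with the south-start option routed into $D_\theta$ rather than $C_\theta$. Finally I would note that $C_\theta = X_\theta$ and $D_\theta$ relates to $Z_\theta$ from the displayed identities above, so consistency with~\eqref{eqn:hp_eqn_before_multiplying} is automatic.
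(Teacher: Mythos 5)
Your combinatorial sketch (first two paragraphs) starts from a misreading of what $C_\theta$ and $D_\theta$ are. The theorem defines them as being \emph{like $A_\theta$}, i.e. as generating functions of \emph{unrestricted} (full-plane) walks with a single $\theta$ step at the end, distinguished solely by whether the first step is south. You instead describe them as ``a series that is like $A_\theta$ but restricted to walks legal in the half plane,'' and then claim to split that restricted series by first step — but a walk that is legal in the half plane can never start with a south step, so under your reading $D_\theta$ would be identically zero and the split collapses. This is not a cosmetic slip: it propagates into the second paragraph, where you attach an unrestricted full-plane $B_\theta$ factor to a half-plane prefix $H_\theta$ and assert (without proof) that the over-count is ``precisely'' $D_\theta(x,y)H^*(x,0)$. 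As you yourself flag, that cancellation is the whole content of the theorem, and your accounting also ignores the over-count coming from the $C_\theta$ piece (the one-$\theta$-step walks in $C_\theta$ which dip below the axis), so even the shape of the bookkeeping is off. The paper's proof closes exactly this gap with a first-passage decomposition: in the set counted by $C_\theta + B_\theta(x,y)H_\theta(x,y)$, every walk that does leave the half plane can be cut uniquely at its last visit to the $\mathbf{x}$-axis before first reaching height $-1$, producing an $H^*(x,0)$ prefix followed by a south-starting $D_\theta$ suffix; the walks that never leave the half plane are exactly $H_\theta$.

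Your third paragraph — deriving the equation algebraically from~\eqref{eqn:hp_system_matrices} by isolating the $\theta$ component as in \cref{lem:constructing_AF_BF}, and recognising $C_\theta$ and $D_\theta$ as the $A_\theta$-construction applied to $(tx,ty,t\olx,0)^\top$ and $(0,0,0,t\oly)^\top$ respectively — is sound and does yield the theorem, since the identity $[(\mathbf I-\hat\T^\top)^{-1}\mathbf v]_\theta=(1-B_\theta)^{-1}\bigl(V_\theta+(\hat\T_{*\theta})^\top(\mathbf I-\mathbf I_\theta\hat\T^\top)^{-1}\mathbf I_\theta\bigr)\mathbf v$ is a Schur-complement-type identity that holds linearly in $\mathbf v$. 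That is a genuinely different route from the paper's (the paper proves the theorem by the bijective argument above and presents~\eqref{eqn:hp_system_matrices} as a parallel, not prerequisite, derivation). What the algebraic route buys is that it sidesteps the first-passage bijection entirely; what it loses is the direct combinatorial meaning of the $-D_\theta H^*(x,0)$ term, which you then have to reconstruct afterwards. If you intend to use the algebraic route, drop the flawed combinatorial paragraphs and spell out the Schur-complement step; if you want the combinatorial proof, fix the definitions of $C_\theta,D_\theta$ and make the first-passage decomposition explicit.
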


\begin{proof}
The idea is the same as for \cref{thm:fullplane_combinatorial}, except we must now excise walks which step below the $\mathbf{x}$-axis. Those which immediately step south have been removed by the use of $C_\theta$ instead of $A_\theta$. Those which step on or above the $\mathbf{x}$-axis first and then later step below are counted by $D_\theta(x,y)H^*(x,0)$.
\end{proof}

It will be convenient to write this functional equation as
\begin{equation}\label{eqn:hp_eqn_rearranged}
(1-B_\theta(x,y))H_\theta(x,y) = C_\theta(x,y) - D_\theta(x,y)H^*(x,0).
\end{equation}

Using the same reasoning as \cref{lem:constructing_AF_BF}, we also have the following.

\begin{lemma}\label{lem:constructing_CF_DF}
The generating functions $C_\theta$ and $D_\theta$ have the solutions
\begin{align}
    C_\theta(x,y) &= V_\theta\cdot \begin{pmatrix}tx \\ ty \\ t\olx \\ 0 \end{pmatrix} + (\hat\T_{* \theta})^\top\cdot(\mathbf I - \mathbf I_\theta\hat\T^\top)^{-1}\mathbf I_\theta\cdot \begin{pmatrix}tx \\ ty \\ t\olx \\ 0 \end{pmatrix} \label{eqn:constructing_CF} \\
    D_\theta(x,y) &= V_\theta\cdot \begin{pmatrix}0 \\ 0 \\ 0 \\ t\oly \end{pmatrix} + (\hat\T_{* \theta})^\top\cdot(\mathbf I - \mathbf I_\theta\hat\T^\top)^{-1}\mathbf I_\theta\cdot \begin{pmatrix}0 \\ 0 \\ 0 \\ t\oly \end{pmatrix}. \label{eqn:constructing_DF}
\end{align}
\end{lemma}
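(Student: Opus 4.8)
The plan is to reuse the decomposition behind \cref{lem:constructing_AF_BF} essentially verbatim, changing only the column vector that records the first step of a walk. Recall how \eqref{eqn:constructing_AF} was read: the factor $(\mathbf I-\mathbf I_\theta\hat\T^\top)^{-1}\mathbf I_\theta$ applied to a ``source'' column solves the full-plane system \eqref{eqn:fp_system_matrices} with every reference to $F_\theta$ deleted, so it enumerates walks that contain no $\theta$ step and whose initial step is whatever the source column permits; then $(\hat\T_{*\theta})^\top$ appends a single $\theta$ step to those ending with a step in $p(\theta)$; and the standalone term $V_\theta\cdot(\text{source})$ records the length-one walk consisting of the $\theta$ step alone, which is allowed regardless of the rule. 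In \cref{thm:fullplane_combinatorial} the source column was $(tx,ty,t\olx,t\oly)^\top$, since a full-plane walk may begin with any step.

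First I would note that, by the description of $C_\theta$ in \cref{thm:halfplane_combinatorial}, $C_\theta$ enumerates exactly the walks counted by $A_\theta$ whose first step is \emph{not} south, and that the first step enters the formula for $A_\theta$ only through the source column, the remaining factors ($V_\theta$ and $(\hat\T_{*\theta})^\top(\mathbf I-\mathbf I_\theta\hat\T^\top)^{-1}\mathbf I_\theta$) being independent of it. Since $A_\theta$ is therefore a fixed row vector applied to the source column, forbidding an initial south step simply means replacing $(tx,ty,t\olx,t\oly)^\top$ by $(tx,ty,t\olx,0)^\top$, which is \eqref{eqn:constructing_CF}; and $D_\theta$, which counts the walks of $A_\theta$ that \emph{do} begin with a south step, uses the source column $(0,0,0,t\oly)^\top$, giving \eqref{eqn:constructing_DF}. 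The matrix $\mathbf I-\mathbf I_\theta\hat\T^\top$ is invertible for every rule by the same $t\to 0$ continuity argument as the one following \eqref{eqn:fp_system_matrices}, so the right-hand sides are well-defined.

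I would then record two consistency checks. The identity $C_\theta+D_\theta=A_\theta$ is immediate, since the two source columns sum to $(tx,ty,t\olx,t\oly)^\top$, and it matches the trivial fact that a walk either does or does not start with a south step. The boundary case $\theta=s$ is also reassuring: $V_s\cdot(tx,ty,t\olx,0)^\top=0$, so $C_s$ has no length-one term, correctly reflecting that a lone south step is not a valid half-plane start, while $V_s\cdot(0,0,0,t\oly)^\top=t\oly$ supplies precisely that step to $D_s$.

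There is no real obstacle here; the only thing to be careful about is to phrase the argument so it is clear that ``append a $\theta$ step'' and ``the lone $\theta$ step'' survive the substitution unchanged — which they do, because those operations act after, respectively independently of, the choice of the initial step, and hence commute with replacing one source column by another.
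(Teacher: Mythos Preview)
Your proposal is correct and mirrors the paper's own approach exactly: the paper simply states that the same reasoning as \cref{lem:constructing_AF_BF} applies, and you have spelled out precisely that, replacing the source column $(tx,ty,t\olx,t\oly)^\top$ by $(tx,ty,t\olx,0)^\top$ and $(0,0,0,t\oly)^\top$ respectively. Your consistency checks ($C_\theta+D_\theta=A_\theta$ and the $\theta=s$ boundary case) are a nice addition but not required.
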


We will next show that for all vertically unbounded models, this equation can be solved with the \emph{kernel method}
\cite{Knuth1973Art,Prodinger2004Kernel}.

% whereby we seek solutions to $B_\theta(x,y)=1$ in the variable $y$ in order to solve $H^*(x,0)$. The number and usefulness of such solutions varies according to the two-step rule in question, however. In the next section we explore the precise structure of $B_\theta(x,y)$ and how this affects the solution to~\eqref{eqn:hp_eqn_rearranged}. 

\subsection{Solution via the kernel method}\label{ssec:structure_BF}

\begin{lemma}\label{lem:BF_dependence_on_y}
Let $\R$ be an aperiodic, vertically unbounded two-step rule. Let $\theta$ be one of the four step directions and $B_\theta(t;x,y)$ the corresponding generating function defined as per \cref{lem:constructing_AF_BF}. Then the equation $B_\theta(t;x,y) = 1$ has at most two solutions in $y$.
\end{lemma}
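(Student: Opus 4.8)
The plan is to understand the structure of $B_\theta(t;x,y)$ as a power series in $t$ and, crucially, as a rational function in $y$, and then read off the degree of the relevant polynomial in $y$. Recall from \cref{lem:constructing_AF_BF} that $B_\theta$ is built from the resolvent $(\mathbf I - \mathbf I_\theta\hat\T^\top)^{-1}$ and the vectors $(\hat\T_{*\theta})^\top$, $(\hat\T_{\theta*})^\top$. Each entry of $\hat\T$ is a monomial in $t$ times one of $x,y,\olx,\oly$; in particular the only source of $y$-dependence is the north row/column (giving a factor $ty$) and the south row/column (giving a factor $t\oly$). By Cramer's rule, $B_\theta(t;x,y)$ is a ratio of two polynomials in $t,x,\olx,y,\oly$, and clearing denominators we may write $B_\theta(t;x,y) = N(t;x,y)/M(t;x,y)$ where $M = \det(\mathbf I - \mathbf I_\theta\hat\T^\top)$ and $N$, $M$ are Laurent polynomials in $x,y$ with polynomial coefficients in $t$.

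First I would establish the key degree bound: \emph{as a Laurent polynomial in $y$, both $N$ and $M$ have $y$-degree at most $1$ and $\oly$-degree at most $1$} — that is, after multiplying through by a suitable power of $y$, the equation $B_\theta(t;x,y) = 1$ becomes $N(t;x,y) - M(t;x,y) = 0$, a polynomial in $y$ of degree at most $2$ (with coefficients that are Laurent polynomials in $x$ and power series in $t$). This is because $\hat\T$, $\hat\T^\top$ each contain $y$ only through the single north entry and $\oly$ only through the single south entry; the matrix $\mathbf I - \mathbf I_\theta\hat\T^\top$ therefore has at most one entry proportional to $y$ and at most one proportional to $\oly$, so its determinant — being multilinear in the entries — is affine in $y$ and affine in $\oly$, hence of the form $\alpha(t;x)y + \beta(t;x) + \gamma(t;x)\oly$. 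The same argument applied to the numerator cofactors shows $N$ has the same shape. Here is where vertical unboundedness must enter: I would argue that it guarantees the coefficient $\alpha$ of $y$ (or $\oly$) in $N - M$ does not vanish identically, so that the polynomial $y\cdot(N-M)$ genuinely has degree exactly $2$ rather than degenerating; more importantly it ensures that $B_\theta$ really does depend on $y$ (a north-bound or south-bound model could have $B_\theta$ independent of $\oly$ or $y$, collapsing the count). In any case the degree is \emph{at most} $2$, which is all the lemma claims.

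The cleanest way to package this is: multiply \eqref{eqn:hp_eqn_rearranged}-style relations, or directly $B_\theta(t;x,y)=1$, by $y$ to clear the single possible $\oly$; observe that the resulting expression $y\big(N(t;x,y)-M(t;x,y)\big)$ is a polynomial in $y$ whose $y$-degree is at most $2$, because $N$ and $M$ are each affine in $y$ together with a single $\oly$ term as established above; conclude that for fixed $t,x$ (with $t$ small enough that everything converges, then by analytic continuation) the equation has at most two roots $y$, counted with multiplicity. The main obstacle I anticipate is not the counting itself but cleanly justifying the "affine in $y$, affine in $\oly$" claim at the level of the cofactor expansion — one has to track that deleting a row and column in forming the $(\hat\T_{*\theta})^\top\cdot(\cdots)^{-1}\cdot(\hat\T_{\theta*})^\top$ product cannot somehow reintroduce higher powers of $y$ through the matrix inverse (it cannot, since the inverse of an $n\times n$ matrix affine in $y$ has entries that are ratios of polynomials affine-in-$y$ — degree $\le n-1$ in the numerator, but here the $\mathbf I_\theta$ projections and the structure of $\hat\T$ cut this down to degree $\le 1$ in each of $y$ and $\oly$ separately). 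Making that reduction rigorous — ideally by a direct inspection of the four transfer-matrix rows rather than a general determinant estimate — is the part that needs care; everything after that is immediate.
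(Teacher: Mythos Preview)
Your approach is the same as the paper's: write $B_\theta = N/M$ with $M = \det(\mathbf I - \mathbf I_\theta\hat\T^\top)$, show both $N$ and $M$ are Laurent polynomials in $y$ with exponents confined to $\{-1,0,1\}$, and conclude that $B_\theta=1$ reduces to a quadratic in $y$. Two points need tightening, though.

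First, the statement ``the matrix $\mathbf I - \mathbf I_\theta\hat\T^\top$ has at most one \emph{entry} proportional to $y$'' is false: the entire second row of $\hat\T^\top$ carries factors of $ty$ (up to four entries). What is true, and what the paper uses, is that all the $y$-dependence lives in one \emph{row} (row $n$) and all the $\oly$-dependence in another (row $s$). The determinant is multilinear in rows, not in entries; linearity in row $n$ then gives that $M$ is affine in $y$, and linearity in row $s$ that it is affine in $\oly$.

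Second, your numerator argument is where you correctly flag the difficulty but do not resolve it. The parenthetical about inverses of matrices affine in $y$ is misleading: generically the adjugate of a $4\times 4$ matrix linear in $y$ has entries of degree up to $3$, not $1$. The saving observation, which the paper makes explicit, is again a row/column bookkeeping: the $(i,j)$ entry of the adjugate is the cofactor obtained by deleting row $j$ and column $i$; deleting row $n$ removes all positive $y$-powers from that cofactor, and deleting row $s$ removes all $\oly$. One then checks that whenever the outer vectors $(\hat\T_{*\theta})^\top$ or $(\hat\T_{\theta*})^\top$ contribute an extra factor of $y$ (respectively $\oly$), the relevant cofactor has had its $y$ (respectively $\oly$) content stripped out, so the product never exceeds degree $1$ in either. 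That column-by-column accounting is the missing piece in your sketch; once in place, the rest of your argument goes through exactly as written.
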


\begin{proof}
We first consider the denominator of $B_\theta(t;x,y)$. Recall from the proof of \cref{lem:domsing_B=1} that this is $\det(\mathbf I-\mathbf I_\theta\hat\T^{\top})$. The elements of the second row of $\hat\T^{\top}$ are $ty$ or 0; the elements of the fourth row are $t\oly$ or 0; and there is otherwise no $y$-dependence. Hence for any rule, $\det(\mathbf I-\mathbf I_\theta\hat\T^{\top})$ is a Laurent polynomial in $y$ with exponents between $-1$ and $1$.

We now turn our attention to the numerator of $B_\theta(t;x,y)$. By similar arguments to the determinant, the terms of $(\mathbf I-\mathbf I_\theta\hat\T^{\top})^{-1}$ are Laurent polynomials in $y$ with exponents between $-1$ and 1. Moreover, there are no terms with positive powers of $y$ in the second column of $(\mathbf I-\mathbf I_\theta\hat\T^{\top})^{-1}$ and no terms with negative powers in the fourth column. In fact if $\theta=n$ then there are no positive powers of $y$ at all, and if $\theta=s$ then there are no negative powers. The terms of $(\mathbf I-\mathbf I_\theta\hat\T^{\top})^{-1}I_\theta\cdot(\hat\T_{f*})^\top$ are thus Laurent polynomials in $y$ with powers between $-1$ and 1, with no positive powers if $\theta=n$ and no negative powers if $\theta=s$. Taking the product with $(\hat\T_{\theta*})^\top$ thus leaves the numerator as a Laurent polynomial in $y$ with exponents between $-1$ and 1.

So $B_\theta(t;x,y)$ is a rational function whose numerator and denominator are Laurent polynomials in $y$ with powers between $-1$ and 1. The statement of the lemma then follows by considering the equation $B_\theta(t;x,y)=1$ and rearranging appropriately.
\end{proof}

\cref{lem:BF_dependence_on_y} 
now allows us to establish the number and form of the solutions to $B_\theta(x,y)=1$ in the variable $y$.

\begin{lemma}\label{lem:shape_of_rhoF}
For an aperiodic and vertically unbounded two-step rule $\R$, let $\rho(x,y)$ be the smallest positive root of $B_\theta(t;x,y)=1$ in $t$, as defined in \cref{lem:domsing_B=1}. Then  $\rho(x,y)$ is an increasing then decreasing function of $y$, approaching 0 as $y\to0$ and as $y\to\infty$.
\end{lemma}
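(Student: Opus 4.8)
The plan is to analyze $\rho(x,y)$ as the smallest positive root in $t$ of the equation $B_\theta(t;x,y)=1$, using the structure of $B_\theta$ established in \cref{lem:BF_dependence_on_y} together with the combinatorial meaning of $B_\theta$ from \cref{thm:fullplane_combinatorial}. First I would fix $x\in(0,\infty)$ and work with $\mu(x,y)=1/\rho(x,y)$, which by \cref{lem:domsing_B=1} and the implicit function theorem is a well-defined, strictly positive, continuously differentiable function of $y\in(0,\infty)$; proving that $\rho$ is increasing-then-decreasing is equivalent to proving that $\mu$ is decreasing-then-increasing, i.e.\ that $\mu$ is (eventually strictly) unimodal with a unique minimum, and that $\mu(x,y)\to\infty$ as $y\to 0^+$ and as $y\to\infty$.

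For the boundary behaviour, I would argue combinatorially. Since the rule is vertically unbounded, there are walks counted by $B_\theta$ that make net downward progress and walks that make net upward progress; concretely, by connectedness plus the failure of north-boundedness there is a cyclic pattern of steps, contributing a fixed monomial $t^{a}y^{-b}$ with $b\geq 1$ (up to the $x$-dependence, which is frozen), that can be iterated arbitrarily many times to produce walks in $B_\theta$ with arbitrarily negative $y$-exponent; likewise failure of south-boundedness gives an iterable pattern with positive $y$-exponent. As $y\to 0^+$, the terms $t^a y^{-b}$ with $b\geq 1$ blow up, so $B_\theta(t;x,y)$, being a power series with nonnegative coefficients containing such terms, forces its radius of convergence (and hence the root $\rho(x,y)$) to $0$; equivalently $\mu(x,y)\to\infty$. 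The same argument with the upward pattern handles $y\to\infty$. The cleanest way to make this precise is to lower-bound $B_\theta(t;x,y)$ by a geometric-type series $\sum_k (t^a y^{-b})^k$ built from iterating one such pattern, whose radius of convergence in $t$ is $y^{b/a}$, giving $\rho(x,y)\leq y^{b/a}\to 0$; and symmetrically $\rho(x,y)\leq y^{-b'/a'}\to 0$ as $y\to\infty$.

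For unimodality of $\mu$, the key is the observation from \cref{lem:BF_dependence_on_y} that $B_\theta(t;x,y)$ is a rational function whose numerator and denominator are Laurent polynomials in $y$ with exponents only in $\{-1,0,1\}$. Clearing denominators, the equation $B_\theta(t;x,y)=1$ can be written, for fixed $t=\rho$, as $P(t)y + Q(t) + R(t)y^{-1}=0$ where $P,Q,R$ are power series in $t$ with coefficients that are (Laurent) polynomials in $x$; equivalently $P(t)y^2 + Q(t)y + R(t)=0$. Here $P$ collects the net-downward contributions, $R$ the net-upward ones, and both are power series with nonnegative coefficients (no $y$-dependence left), hence positive for $t\in(0,\rho_{\max})$, while $Q$ is a difference of such series. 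Viewing this as defining the curve $t\mapsto\mu(x,y(t))$ implicitly, I would differentiate $B_\theta(\rho(x,y);x,y)=1$ in $y$ to get $B_\theta^{(1,0,0)}\cdot\partial_y\rho + B_\theta^{(0,0,1)}=0$, and since $B_\theta^{(1,0,0)}>0$ (convergent series with positive coefficients, as in \cref{lem:domsing_B=1}), the sign of $\partial_y\rho$ is the opposite of the sign of $B_\theta^{(0,0,1)}(\rho;x,y)$. Writing $B_\theta=\beta_{-1}(t;x)y^{-1}+\beta_0(t;x)+\beta_1(t;x)y$ with $\beta_{\pm1}\geq 0$ coefficientwise and both nonzero (vertical unboundedness), we get $B_\theta^{(0,0,1)}(\rho;x,y)=\beta_1(\rho;x)-\beta_{-1}(\rho;x)y^{-2}$, which is strictly increasing in $y$ and passes from negative (small $y$) to positive (large $y$); hence $\partial_y\rho>0$ then $\partial_y\rho<0$, giving the increasing-then-decreasing claim — provided $\beta_{\pm1}(\rho(x,y);x)>0$, which holds because $\rho(x,y)>0$ and $\beta_{\pm1}$ are nonzero power series with nonnegative coefficients.

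The main obstacle I anticipate is verifying cleanly that $B_\theta$ really does have the three-term form $\beta_{-1}y^{-1}+\beta_0+\beta_1 y$ with $\beta_{-1}$ and $\beta_1$ both nonzero \emph{as power series}, and that their nonvanishing at $t=\rho(x,y)$ is guaranteed: \cref{lem:BF_dependence_on_y} gives the Laurent-degree bound, but one must additionally identify $\beta_{-1}$ (resp.\ $\beta_1$) combinatorially as counting $\theta$-to-$\theta$ return loops with net vertical displacement $-1$ (resp.\ $+1$) and argue, via vertical unboundedness and connectedness, that such loops exist and hence $\beta_{\pm 1}\not\equiv 0$, so that $\beta_{\pm1}(\rho;x)>0$ for $\rho>0$. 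Once the three-term structure and positivity of the extreme coefficients are pinned down, the monotonicity argument via the sign of $B_\theta^{(0,0,1)}$ and the boundary estimates are routine.
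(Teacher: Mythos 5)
The high-level strategy is sound and partly mirrors the paper's: fix $x$, differentiate $B_\theta(\rho(x,y);x,y)=1$ implicitly, use that $B_\theta^{(1,0,0)}>0$ to reduce the sign of $\partial_y\rho$ to the sign of $-B_\theta^{(0,0,1)}$, and handle the limits $y\to 0$ and $y\to\infty$ by exhibiting terms of $B_\theta$ that blow up there. The difficulty you flag at the end is, however, fatal as stated.

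The three-term form $B_\theta(t;x,y)=\beta_{-1}(t;x)y^{-1}+\beta_0(t;x)+\beta_1(t;x)y$ is false. \Cref{lem:BF_dependence_on_y} says only that $B_\theta$ is a \emph{ratio} of Laurent polynomials in $y$ of degree at most $1$ in each of $y,\oly$; after expanding this rational function as a Laurent series in $y$ (which is what one needs in order to interpret the coefficients as generating functions of walk classes), the exponents are unbounded. For instance, $B_n$ counts walks with exactly one north step (the last), so its $y$-exponents run from $1$ down to arbitrarily negative values; for $\theta=e$ or $w$ the exponents are unbounded in both directions. Consequently the identity $B_\theta^{(0,0,1)}=\beta_1-\beta_{-1}y^{-2}$ does not hold, and the monotonicity-in-$y$ of $B_\theta^{(0,0,1)}$ that you deduce from it is not established. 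You cannot ``identify $\beta_{\pm1}$ combinatorially as loops of displacement $\pm1$'' because those objects do not exhaust the $y$-dependence of $B_\theta$.

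The paper's route around this is a convexity argument, which requires no structural claim beyond positivity of coefficients. For fixed $t<\beta_\theta(x,y)$, $B_\theta$ is a Laurent series in $y$ with nonnegative coefficients, and vertical unboundedness guarantees nonzero terms of $y$-degree $\geq 2$ or $\leq -1$, so $B_\theta^{(0,0,2)}(t;x,y)>0$. At any interior critical point $y^\dagger$ of $\rho$ one has $B_\theta^{(0,0,1)}(\rho(x,y^\dagger);x,y^\dagger)=0$, and strict convexity then forces $B_\theta(\rho(x,y^\dagger);x,y)>1$ on both sides of $y^\dagger$; monotonicity of $B_\theta$ in $t$ gives $\rho(x,y)<\rho(x,y^\dagger)$ on both sides, so every critical point is a strict local maximum. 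Combined with $\rho\to 0$ at both ends, there can be only one critical point, which yields increasing-then-decreasing. Your boundary argument via iterable non-$\theta$ loops is in the same spirit as the paper's (the paper invokes the $y$ term of $B_n$ and the $\oly$ term of $B_s$, using \cref{cor:rho_indep_of_F}), but note that a loop can only be inserted into a $B_\theta$-walk if it avoids $\theta$ steps and attaches at a compatible vertex, which needs a short connectedness argument; the paper sidesteps this by working directly with $B_n$ and $B_s$.
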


\begin{proof}
First recall that by the implicit function theorem, $\rho(x,y)$ is a continuously differentiable function of $x$ and $y$ for $x,y\in(0,\infty)$. Also, note that $\rho(x,y) < \beta_\theta(x,y)$, where the latter is the radius of convergence of $B_\theta(t;x,y)$, as discussed in the proof of \cref{lem:domsing_B=1}.

Now $B_n(t;x,y)$ has positive powers of $y$ and $B_s(t;x,y)$ has negative powers of $y$, so we must have $\rho(x,y)\to0$ in both limits. Since $\rho(x,y)$ is continuously differentiable, there must be at least one value of $y\in(0,\infty)$ with $\frac{\partial}{\partial y}\rho(x,y)=0$. Call this point $y^\dagger$.

For any of the $B_\theta$, we have by the chain rule
\begin{equation}
    \frac{\partial}{\partial y}\rho(x,y) = -\frac{B_\theta^{(0,0,1)}(\rho(x,y);x,y)}{B_\theta^{(1,0,0)}(\rho(x,y);x,y)}.
\end{equation}
For $x,y\in(0,\infty)$ we have $0<\rho(x,y)<\beta_\theta(x,y)$, and so $0<B_\theta^{(1,0,0)}(\rho(x,y);x,y)<\infty$. Hence
\begin{equation}
    B_\theta^{(0,0,1)}(\rho(x,y^\dagger);x,y^\dagger) = 0.
\end{equation}
For fixed $t\in(0,\beta_\theta(x,y))$, note that because the power series (in $t$) $B_\theta$ is absolutely convergent, it can be rewritten as a Laurent series in the variable $y$ with some annulus of convergence $y_r < |y| < y_R$ (dependent on $t$). Then for $y\in(y_r,y_R)$ (that is, $y$ real) we have
\begin{equation}
    \frac{\partial^2}{\partial y^2}B_\theta(t;x,y) >0.
\end{equation}
In particular, $B_\theta^{(0,0,2)}(\rho(x,y^\dagger);x,y^\dagger)>0$.
%It thus follows that $y^\dagger$ is a local minimum of $B_\theta(\rho(x,y);x,y)$. 
Hence there is an $\epsilon>0$ such that for $(y^\dagger-\epsilon) <y' < y^\dagger < y'' < (y^\dagger+\epsilon)$ we have
\begin{equation}
    B_\theta(\rho(x,y^\dagger);x,y') > B_\theta(\rho(x,y^\dagger);x,y^\dagger)\quad\text{and}\quad B_\theta(\rho(x,y^\dagger);x,y'') > B_\theta(\rho(x,y^\dagger);x,y^\dagger).
\end{equation}
Since $B_\theta$ is monotone increasing in $t$, we must then have $\rho(x,y') < \rho(x,y^\dagger)$ and $\rho(x,y'')<\rho(x,y^\dagger)$. So $y^\dagger$ is a local maximum of $\rho(x,y)$. 

But if $\rho(x,y)$ must have at least one point where $\frac{\partial}{\partial y}\rho(x,y)=0$, and every such point must be a local maximum, it follows that $\rho(x,y)$ has precisely one local maximum, and is an increasing then decreasing function of $y$.
\end{proof}

In \cref{fig:plot_radii_spiral} we illustrate the different $\beta_\theta(x,y)$ and $\rho(x,y)$ for spiral walks.
\begin{figure}[t]
\centering
\includegraphics[width=0.8\textwidth]{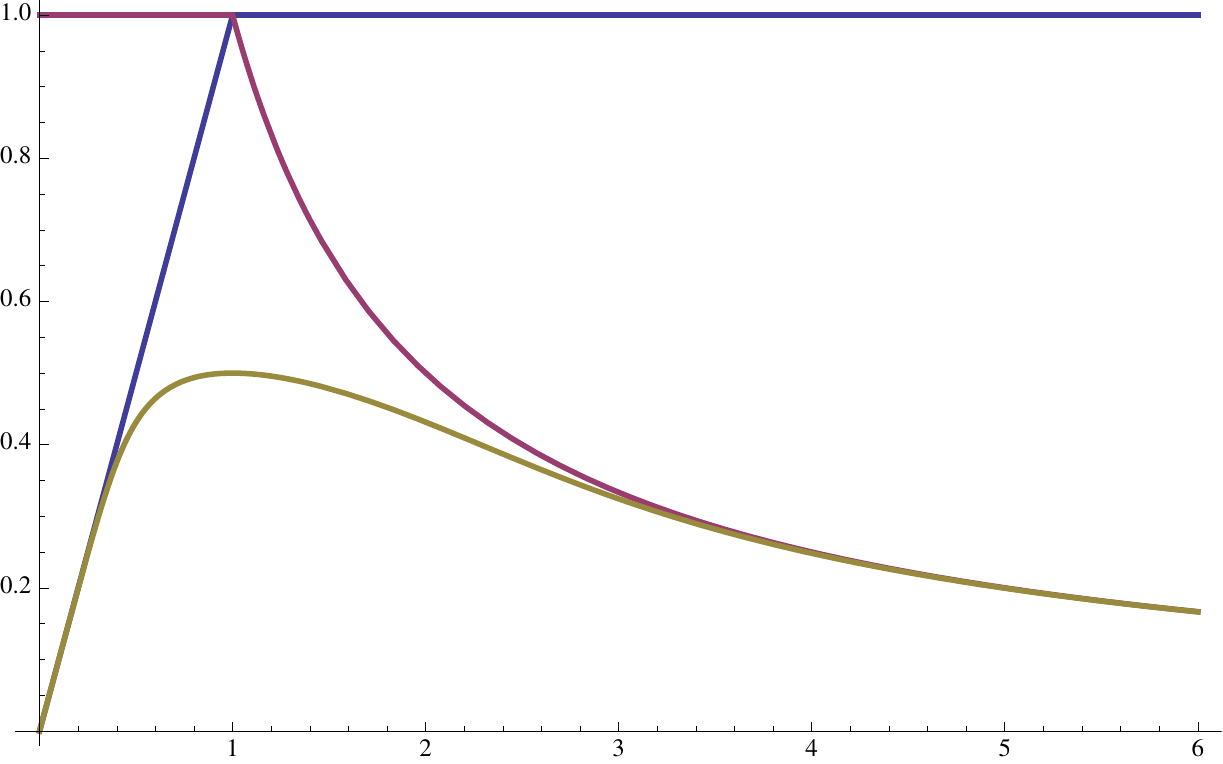}
\caption{A plot of the different $\beta_\theta(1,y)$ and $\rho(1,y)$ for spiral walks: $\beta_n(1,y)$ is blue; $\beta_s(1,y)$ is pink; and $\rho(1,y)$ is gold. The other radii $\beta_e(1,y) = \beta_w(1,y)$ are equal to $\min(\beta_n(1,y),\beta_s(1,y))$, i.e. the blue curve for $y\leq1$ and the pink curve for $y\geq 1$.}
\label{fig:plot_radii_spiral}
\end{figure}

\begin{corollary}\label{cor:solns_B=1_iny}
For $\R$ aperiodic and vertically unbounded with $x\in(0,\infty)$, there exists a $\kappa\equiv \kappa(x)>0$ such that for $0<t<\kappa$, there are two positive, continuously differentiable functions $\upsilon^-(t;x)$ and $\upsilon^+(t;x)$ satisfying $B_\theta(t;x,\upsilon^-(t;x)) = B_\theta(t;x,\upsilon^+(t;x)) = 1$. The first function $\upsilon^-(t;x)$ is an increasing function of $t$, approaching 0 as $t\to0$ and a positive value $\tau\equiv\tau(x)$ as $t\to\kappa$; the second function $\upsilon^+(t;x)$ is a decreasing function, which diverges as $t\to 0$ and approaches $\tau$ as $t\to\kappa$.
\end{corollary}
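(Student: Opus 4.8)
The plan is to read off everything from \cref{lem:shape_of_rhoF}, which tells us that for fixed $x\in(0,\infty)$ the function $y\mapsto\rho(x,y)$ increases from $0$ (as $y\to0$) up to a unique maximum at $y=y^\dagger(x)$, then decreases back to $0$ (as $y\to\infty$). Set $\kappa\equiv\kappa(x) = \rho(x,y^\dagger(x))$, the maximal value attained, and $\tau\equiv\tau(x) = y^\dagger(x)$. I would first observe that the curve $\{(t,y): t=\rho(x,y),\ y>0\}$ in the $(t,y)$-plane is exactly the set of pairs $(t,y)$ with $0<t<\beta_\theta(x,y)$ at which $B_\theta(t;x,y)=1$, since by \cref{lem:domsing_B=1} $t\mapsto B_\theta(t;x,y)$ is strictly increasing from $0$ to $\infty$ on $(0,\beta_\theta(x,y))$, so it hits the value $1$ exactly once, at $t=\rho(x,y)$. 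Thus solving $B_\theta(t;x,y)=1$ in $y$ for fixed $t$ is the same as inverting the graph of $\rho(x,\cdot)$.

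Next I would invert. Restricted to the interval $(0,y^\dagger)$, the map $y\mapsto\rho(x,y)$ is continuous, strictly increasing, with range $(0,\kappa)$; by the inverse function theorem (using that $\rho$ is continuously differentiable by the implicit function theorem, and that $\frac{\partial}{\partial y}\rho(x,y)\neq 0$ on this open interval, the only zero being at $y^\dagger$) it has a continuously differentiable, strictly increasing inverse, which I call $\upsilon^-(\cdot;x):(0,\kappa)\to(0,y^\dagger)$. Because $\rho(x,y)\to0$ as $y\to0^+$ and $\rho(x,y)\to\kappa$ as $y\to y^\dagger{}^-$, the inverse satisfies $\upsilon^-(t;x)\to0$ as $t\to0^+$ and $\upsilon^-(t;x)\to\tau=y^\dagger$ as $t\to\kappa^-$. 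Symmetrically, on $(y^\dagger,\infty)$ the map $y\mapsto\rho(x,y)$ is continuous, strictly decreasing, with range $(0,\kappa)$, hence has a continuously differentiable, strictly decreasing inverse $\upsilon^+(\cdot;x):(0,\kappa)\to(y^\dagger,\infty)$ with $\upsilon^+(t;x)\to\tau$ as $t\to\kappa^-$ and $\upsilon^+(t;x)\to\infty$ as $t\to0^+$. By construction $B_\theta(t;x,\upsilon^-(t;x))=B_\theta(t;x,\upsilon^+(t;x))=1$ for all $0<t<\kappa$, and these are positive and distinct (with $\upsilon^-(t;x)<y^\dagger<\upsilon^+(t;x)$) for $t\in(0,\kappa)$.

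The only point needing a little care — and the one I would flag as the main (minor) obstacle — is the behaviour of $\upsilon^+$ as $t\to0^+$: I must argue that it genuinely diverges rather than tending to some finite limit. This follows from \cref{lem:BF_dependence_on_y}: for fixed $t$, the equation $B_\theta(t;x,y)=1$ has at most two solutions in $y$, and since the denominator and numerator of $B_\theta$ are Laurent polynomials in $y$ with exponents in $\{-1,0,1\}$, clearing denominators turns $B_\theta(t;x,y)=1$ into a polynomial equation of degree at most $2$ in $y$ whose leading coefficient (the coefficient of $y^2$) vanishes as $t\to0$ (every term of $B_\theta$ carries a positive power of $t$, and the $y^{+1}$ contributions come only from the $B_n$-type entries which are $O(t)$). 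A quadratic whose leading coefficient tends to $0$ while the other coefficients stay bounded has one root escaping to $\infty$; that escaping root is $\upsilon^+(t;x)$, while $\upsilon^-(t;x)$ stays bounded and tends to $0$. Combined with the monotonicity already established, this pins down all the claimed limiting behaviour, completing the proof.
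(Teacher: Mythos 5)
Your proof is correct and takes essentially the same approach as the paper's own (quite terse) argument: invoke \cref{lem:shape_of_rhoF}, set $\kappa=\rho(x,y^\dagger)$ and $\tau=y^\dagger$, and define $\upsilon^\pm$ as the inverses of $\rho(x,\cdot)$ on the two monotone branches either side of $y^\dagger$. One small remark: the separate paragraph you flag as the ``main obstacle'' (arguing via the vanishing leading coefficient of the quadratic that $\upsilon^+$ genuinely diverges) is redundant — the statement of \cref{lem:shape_of_rhoF} already asserts $\rho(x,y)\to 0$ as $y\to\infty$, and your second paragraph correctly reads the divergence of $\upsilon^+$ as $t\to 0^+$ straight off the range of $\rho(x,\cdot)$ on $(y^\dagger,\infty)$, so no further argument is needed.
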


\begin{proof}
We have $\kappa = \rho(x,y^\dagger)$, where $y^\dagger$ is the value of $y$ at which $\frac{\partial}{\partial y} \rho(x,y)=0$, as discussed in \cref{lem:shape_of_rhoF}. Since $\rho(x,y)$ is not an injective function of $y$, it is not invertible; however, the sections for $y<y^\dagger$ and $y>y^\dagger$ are injective, and are thus individually invertible. Thus $\upsilon^-(t;y)$ is the inverse of $\rho(x,y)$ over $0<y<y^\dagger$ and $\upsilon^+(t;y)$ is the inverse of $\rho(x,y)$ over $y>y^\dagger$. It is then clear that $\tau=y^\dagger$.
\end{proof}

We have four equations of the form of~\eqref{eqn:hp_eqn_rearranged}, each of which will give the eventual solution
\begin{equation}\label{eqn:hp_generic_soln}
    H_\theta(x,y) = \frac{C_\theta(x,y) - D_\theta(x,y)H^*(x,0)}{1-B_\theta(x,y)}.
\end{equation}
So to proceed, we must solve $H^*(x,0)$.

\begin{lemma}\label{lem:irrat_H0_soln}
For an aperiodic vertically unbounded rule $\R$, let $\upsilon^-(t;x)$ be as defined in \cref{cor:solns_B=1_iny}. Then for any step direction $\theta$,
\begin{equation}\label{eqn:irrat_H0_soln}
H^*(x,0) = \frac{C_\theta(x,\upsilon^-(x))}{D_\theta(x,\upsilon^-(x))}.
\end{equation}
\end{lemma}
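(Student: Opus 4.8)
The plan is to read off $H^*(x,0)$ from the functional equations~\eqref{eqn:hp_eqn_rearranged} by the \emph{kernel method}. Fix a step direction $\theta$; then
\[
(1-B_\theta(x,y))\,H_\theta(x,y) = C_\theta(x,y) - D_\theta(x,y)\,H^*(x,0).
\]
The single idea is to substitute $y=\upsilon^-(t;x)$, the small root of $B_\theta(t;x,y)=1$ supplied by \cref{cor:solns_B=1_iny}. This makes the left-hand side vanish, so the right-hand side must vanish too, and solving for $H^*(x,0)$ gives exactly~\eqref{eqn:irrat_H0_soln}. Everything beyond this is checking that the substitution is legitimate.

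To justify it, I would work with formal power series in $t$. By \cref{cor:solns_B=1_iny}, $\upsilon^-(t;x)$ is a power series in $t$ with vanishing constant term, i.e.\ it has positive valuation in $t$. On $H_\theta(t;x,y)$ the substitution is harmless: walks here stay in the upper half plane, so $H_\theta$ involves only nonnegative powers of $y$, and $H_\theta(t;x,\upsilon^-(t;x))$ is a genuine power series in $t$. The functions $B_\theta$, $C_\theta$, $D_\theta$ do contain negative powers of $y$ (the $\oly$ entries in the matrices of \cref{lem:constructing_AF_BF,lem:constructing_CF_DF}), but in any monomial the power of $\oly$ is at most the power of $t$, since each such factor records a step that lowers the $\y$-coordinate and hence costs a $t$; combined with $\upsilon^-(t;x)$ having no constant term, this keeps the substituted expressions under control (at worst they become Laurent series in $t$ whose negative parts cancel in the combinations that occur). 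The identity $B_\theta(t;x,\upsilon^-(t;x))=1$ then holds by the very definition of $\upsilon^-$ — and note that one must take the branch $\upsilon^-$ rather than $\upsilon^+$, since the latter blows up as $t\to0$ and substituting it would produce nonconvergent sums.

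It remains to divide by $D_\theta(t;x,\upsilon^-(t;x))$, so I would check it is not identically zero: by \cref{lem:constructing_CF_DF}, $D_\theta$ generates the walks that start with a south step and contain exactly one $\theta$ step, their last, a family that is nonempty by connectedness (it contains the one-step walk when $\theta=s$), and its lowest-order term survives the substitution as a nonzero series; moreover the quotient $C_\theta/D_\theta$ is forced to equal the power series $H^*(x,0)$, so no pole can actually remain. Dividing gives~\eqref{eqn:irrat_H0_soln}, and since $H^*(x,0)$ was defined with no reference to $\theta$, the formula holds for every step direction, as stated. The one genuinely delicate point — and the only real obstacle — is the middle paragraph: ensuring that evaluating the Laurent-in-$y$ functions $B_\theta$, $C_\theta$, $D_\theta$ at the small root $y=\upsilon^-(t;x)$ is a well-defined operation. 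An alternative is to run the argument analytically, fixing $x\in(0,\infty)$ and $t$ small and positive so that $\upsilon^-(t;x)$ is a small positive real lying in the relevant domains of convergence; this trades the formal-valuation check for a convergence check but meets essentially the same difficulty.
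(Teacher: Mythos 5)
Your proof is correct and takes essentially the same route as the paper: substitute the small root $\upsilon^-(t;x)$ into the kernel-form equation~\eqref{eqn:hp_eqn_rearranged}, note that $\upsilon^+$ is inadmissible because it blows up at $t=0$, and solve for $H^*(x,0)$. The one place the paper is cleaner is that it leans on \cref{lem:BF_dependence_on_y} to observe that $B_\theta(t;x,y)=1$ is at most quadratic in $y$, so $\upsilon^\pm$ are honest algebraic functions of the explicit form $(p\pm\sqrt q)/r$; this makes the legitimacy of substituting $\upsilon^-$ into the (rational) coefficients $B_\theta$, $C_\theta$, $D_\theta$ immediate and sidesteps the formal-valuation worry you flag — in particular the parenthetical about ``negative parts cancelling'' is unnecessary (after substitution one gets a power series outright, not a Laurent series), and the rational-function structure, rather than a term-by-term valuation bound, is the cleaner way to see that the substitution is well defined.
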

\begin{proof}
\cref{cor:solns_B=1_iny} guarantees the existence of \emph{two} functions, $\upsilon^-(t;x)$ and $\upsilon^+(t;x)$, which cancel the kernel. \cref{lem:BF_dependence_on_y} asserts that these are the only two solutions, as the equation $B_\theta(t;x,y)=1$ is at most quadratic in $y$, and that these two solutions are of the form
\begin{equation}
    \upsilon^\pm(t;x) = \frac{p(t;x) \pm \sqrt{q(t;x)}}{r(t;x)}
\end{equation}
where $p,q,r$ are polynomials in $t$ whose coefficients are Laurent polynomials in $x$. However, since we know that $\upsilon^+(t;x)\to\infty$ as $t\to0$, it cannot possibly have a power series expansion at $t=0$ and thus cannot be validly substituted into~\eqref{eqn:hp_eqn_rearranged}. On the other hand, $\upsilon^-(t;x)\to0$ as $t\to0$, and thus it does have a Taylor series expansion at $t=0$ and can be substituted. Making this substitution leads to~\eqref{eqn:irrat_H0_soln}.
\end{proof}

\subsection{Asymptotics}\label{ssec:hp_asymps}

Having solved the generating functions of walks obeying connected two-step rules in the upper half plane, we now turn our attention to the asymptotic behaviour of the coefficients. This behaviour depends on the vertical drift $\delta_{\mathbf y}(x,y)$ of the model -- those with a positive drift will have essentially the same asymptotic form (up to a constant factor) as their full-plane equivalents, while those with zero or negative drift may have quite different behaviour. As for the full plane, we continue to write $\mu_(x,y) = 1/\rho(x,y)$.

\begin{lemma}\label{lem:hp_irrat_posdrift}
For an aperiodic vertically unbounded rule $\R$, let $x\in(0,\infty)$ and take $y>\tau(x)$, where $\tau(x)$ is defined in \cref{cor:solns_B=1_iny}. Then the vertical drift $\delta_{\mathbf y}(x,y)$ of the model is positive. For a given step direction $\theta$ with half-plane generating function $H_\theta(t;x,y)$ as defined by~\eqref{eqn:hp_generic_soln} and \cref{lem:irrat_H0_soln}, the dominant singularity of $H_\theta$ is a simple pole at $t=\rho(x,y)$. Near $t=\rho(x,y)$, 
\begin{equation}\label{eqn:hp_irrat_posdrift_near_sing}
    H_\theta(t;x,y) \underset {t\to \rho(x,y)}{\sim} c^+(x,y) \frac{1}{1-t/\rho(x,y)},
\end{equation}
where
\begin{equation}
    c^+(x,y) = \frac{C_\theta(\rho(x,y);x,y) - D_\theta(\rho(x,y);x,y)H^*(\rho(x,y);x,0)}{\rho(x,y)B^{(1,0,0)}_\theta(\rho(x,y);x,y)}.
\end{equation}
Thus
\begin{equation}
    \Theta^+_m(x,y) \sim c^+(x,y)\mu(x,y)^m \quad\text{as }m\to\infty,
\end{equation}
where $\mu(x,y) = 1/\rho(x,y)$.
\end{lemma}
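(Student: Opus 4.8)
The plan is to proceed in three stages, all built on top of the singularity-analysis framework already developed for the full plane in \cref{ssec:fp_gfs} and \cref{ssec:endpoint}, together with the structural results of \cref{ssec:structure_BF}. First I would establish that $\delta_{\mathbf y}(x,y) > 0$ for $y > \tau(x)$. By \cref{lem:delta_ind_direction} we have $\delta_{\mathbf y}(x,y) = \frac{\partial}{\partial y}\log\mu(x,y) = -\frac{1}{\rho(x,y)}\cdot\frac{\partial}{\partial y}\rho(x,y)$, so the sign of the drift is opposite to the sign of $\frac{\partial}{\partial y}\rho(x,y)$. By \cref{lem:shape_of_rhoF}, $\rho(x,y)$ is increasing then decreasing in $y$ with its unique maximum at $y^\dagger$, and by \cref{cor:solns_B=1_iny} that maximising value is exactly $\tau(x)$. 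Hence for $y > \tau(x)$ we have $\frac{\partial}{\partial y}\rho(x,y) < 0$, giving $\delta_{\mathbf y}(x,y) > 0$ as claimed.

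Second, I would analyse the singularity structure of $H_\theta(t;x,y)$ as given by~\eqref{eqn:hp_generic_soln} and \cref{lem:irrat_H0_soln}. Fixing $x,y$ with $y > \tau(x)$, the candidate singularities of
\[
H_\theta(x,y) = \frac{C_\theta(x,y) - D_\theta(x,y)H^*(x,0)}{1-B_\theta(x,y)}
\]
are: zeros of $1-B_\theta(t;x,y)$ in $t$; singularities of $C_\theta$ and $D_\theta$; and singularities of $H^*(x,0) = C_\theta(x,\upsilon^-(x))/D_\theta(x,\upsilon^-(x))$. For the first source, the argument of \cref{lem:domsing_B=1} applies verbatim: $B_\theta(t;x,y)$ is a power series in $t$ with nonnegative coefficients that increases monotonically from $0$, so its smallest positive root in $t$ is $\rho(x,y)$ and, since $B_\theta^{(1,0,0)}(\rho;x,y)>0$, this is a simple zero, hence a simple pole of $H_\theta$. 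I must then check that this pole is not cancelled by a zero of the numerator and that the remaining candidate singularities lie strictly beyond $\rho(x,y)$. The key point here is that $H^*(x,0)$ is the half-plane generating function for walks ending on the $\mathbf x$-axis, hence a power series in $t$ with nonnegative coefficients whose radius of convergence is at least the radius of convergence of $H_\theta$ itself (walks are a sub-collection, with the usual connectedness argument à la \cref{lem:domsing_B=1} giving equality); since $y > \tau(x)$, evaluating at $y$ — which lies on the decreasing branch where $\rho(x,\cdot)$ is ``far'' from its maximum — keeps $\rho(x,y)$ strictly smaller than the radius of convergence of $H^*(x,0)$, of $C_\theta$, and of $D_\theta$. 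The numerator is then a convergent power series with (eventually) nonnegative coefficients on $(0,\rho(x,y))$ and cannot vanish there. This gives~\eqref{eqn:hp_irrat_posdrift_near_sing}: near $t=\rho(x,y)$, using $1 - B_\theta(t;x,y) \sim -B_\theta^{(1,0,0)}(\rho;x,y)\,(t-\rho) = \rho B_\theta^{(1,0,0)}(\rho;x,y)(1-t/\rho)$, the function $H_\theta$ behaves like $c^+(x,y)/(1-t/\rho(x,y))$ with $c^+(x,y)$ the stated residue-type constant.

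Third, I would apply singularity analysis (transfer theorems from~\cite{Flajolet2009Analytic}) to conclude $\Theta_m^+(x,y) \sim c^+(x,y)\mu(x,y)^m$; this is the same extraction of coefficients from a simple pole used throughout \cref{sec:fullplane}, e.g. the computation preceding \cref{cor:FP_asymps_from_gfs}. The main obstacle, and the step deserving the most care, is the second stage: verifying that $\rho(x,y)$ is genuinely the dominant singularity, i.e. that $H^*(x,0)$ (equivalently the combination $C_\theta(x,\upsilon^-(x))/D_\theta(x,\upsilon^-(x))$) does not introduce a singularity in $t$ closer to the origin than $\rho(x,y)$, and that no spurious pole is created by a zero of $D_\theta(x,\upsilon^-(x))$. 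I would handle this by a combinatorial positivity/domination argument paralleling \cref{lem:domsing_B=1}: $H^*$ counts a subset of walks counted by $H_p = \sum_\theta H_\theta$, so its radius of convergence in $t$ is no smaller than that of $H_\theta$, and by connectedness (a bounded-length bridge from an axis-ending walk to any $\theta$-ending walk) the two radii coincide; since that common radius is $\rho(x,y)$ only when it equals the pole coming from $1-B_\theta=1$, and the hypothesis $y>\tau(x)$ ensures we are on the branch where this pole is the binding constraint, the claim follows. The restriction $y>\tau(x)$ is precisely what rules out the zero-drift and negative-drift regimes where $\upsilon^-$ would collide with the dominant behaviour and a square-root singularity would emerge instead — those cases are treated separately in the paper.
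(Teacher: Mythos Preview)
Your overall architecture is right, and your first and third stages match the paper's proof essentially verbatim. The gap is in the second stage, specifically in ruling out the possibility that the numerator
\[
N(t) \;=\; C_\theta(t;x,y) - D_\theta(t;x,y)\,H^*(t;x,0)
\]
vanishes at $t=\rho(x,y)$, which would cancel the simple pole. You assert that $N$ is ``a convergent power series with (eventually) nonnegative coefficients'' and therefore cannot vanish, but this is not justified: there is an explicit minus sign in $N$, and the identity $N = (1-B_\theta)H_\theta$ does not help, since it is exactly the behaviour of $H_\theta$ at $\rho(x,y)$ that is in question. Positivity on the open interval $(0,\rho(x,y))$ is easy (there $1-B_\theta>0$ and $H_\theta>0$), but that says nothing about the endpoint. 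Your third paragraph also conflates two separate issues: you focus on whether $H^*(x,0)$ might introduce a \emph{closer} singularity (and your argument there is slightly circular, comparing $H^*$ to $H_\theta$ whose radius is unknown), whereas the real danger is the numerator vanishing \emph{at} $\rho(x,y)$.

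The paper handles this with a genuinely different and more delicate argument. For the radius of $H^*(t;x,0)$ it compares to \emph{full-plane} walks (not half-plane), using the $y$-independence of $H^*$ to get radius $\ge \kappa(x) > \rho(x,y)$. For the non-vanishing of $N$ at $\rho(x,y)$, it first treats $\theta=n$ and argues in two steps: (i) as $y\to\infty$, $D_n(\rho;x,y)H^*(\rho;x,0)\to 0$ (no positive powers of $y$) while $C_n(\rho;x,y)$ stays bounded away from $0$ (it dominates the positive-$y$ part of $B_n$, which tends to $1$), so $N(\rho(x,y))\neq 0$ for large $y$; (ii) since $N$ is algebraic in $y$, any zero would be isolated, but at such a $y'$ the dominant singularity of $H_n$ would jump strictly above $\rho(x,y')$, contradicting the fact that the radius of convergence of $H_n(t;x,y)$ is non-increasing in $y$ (half-plane walks have only nonnegative powers of $y$). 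The extension to the other three directions is then by a short connectedness/sandwiching argument. This monotonicity-plus-isolated-zeros step is the missing idea in your proposal.
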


\begin{proof}
The vertical drift being positive follows from the fact that $\tau(x)$ is the unique point in $(0,\infty)$ where $\frac{\partial}{\partial y}\rho(x,y) = 0$, and so for $y>\tau(x)$, $\frac{\partial}{\partial y}\rho(x,y)<0$ and hence $\frac{\partial}{\partial y}\mu(x,y)>0$.

\cref{lem:domsing_B=1} and \cref{cor:FP_asymps_from_gfs} show how in the full plane the dominant singularity at $t=\rho(x,y)$ arises as a root of $B_\theta(t;x,y)=1$, and how this singularity contributes to the asymptotic behaviour of the series coefficients. The same idea applies here. We must thus show (a) that the numerator $C_\theta(t;x,y) - D_\theta(t;x,y)H^*(t;x,0)$ of~\eqref{eqn:hp_generic_soln} is not singular at any point $t=t'$ with $|t'|\leq \rho(x,y)$, and (b) that this numerator does not vanish at $t=\rho(x,y)$.

We begin with the first point (a). Using the same ideas as described in the proof of \cref{lem:domsing_B=1}, one can easily show that the radii of convergence of $C_\theta(t;x,y)$ and $D_\theta(t;x,y)$ (viewed as functions of $t$) are no smaller than $\beta_\theta(x,y)$, the radius of convergence of $B_\theta(t;x,y)$. Since $\rho(x,y)<\beta(x,y)$, these functions thus cannot introduce singularities closer to zero than $\rho(x,y)$. 

As for $H^*(t;x,0)$, note that (i) it counts a subset of walks in the full plane, and (ii) it is independent of $y$. It follows that its radius of convergence can be no smaller than $\rho(x,y)$, for any $y>0$. In particular, by considering $y=\tau(x)$, we see that the radius of convergence of $H^*(t;x,0)$ is at least $\kappa(x) > \rho(x,y)$ for $y>\tau(x)$.

To address point (b), we must show that
\begin{equation}\label{eqn:H0_neq_CFonDF}
    C_\theta(\rho(x,y);x,y) - D_\theta(\rho(x,y);x,y)H^*(\rho(x,y);x,0)
\end{equation}
is non-zero for $x\in(0,\infty)$ and $y\in(\tau(x),\infty)$. We will show that this is the case for $\theta=n$, and then demonstrate how this implies it is also true for the other three cases.

First note that $D_n(t;x,y)H^*(t;x,0)$ is a series in $t$ with no $t^0$ term and with no positive powers of $y$. Since $\rho(x,y) \to 0$ as $y\to\infty$, it must be the case that $D_n(\rho(x,y);x,y)H^*(\rho(x,y);x,0) \to 0$ as $y\to\infty$.

Next, let $B^+_n(t;x,y) = y[y^1]B_n(t;x,y)$ (this is everything in $B_n$ with a positive power of $y$). Observe that $B^+_n(\rho(x,y);x,y) \nearrow 1$ as $y\to\infty$, since $B_n$ is identically 1 at $t=\rho$ and (by the same argument as the previous paragraph) all the terms with non-positive powers of $y$ must vanish in the limit.

Now every walk counted by $B_n$ which ends at $\mathbf{y} = 1$ is also counted by $C_n$ (such a walk cannot start with a south step). So $C_n(\rho(x,y);x,y)$ is bounded away from 0 as $y\to\infty$, and hence \eqref{eqn:H0_neq_CFonDF} is non-zero for sufficiently large $y$.

Since \eqref{eqn:H0_neq_CFonDF} is algebraic, it can be zero only at a set of isolated points in $(\tau(x),\infty)$. Suppose (for a contradiction) that $y'$ is one such point. Then
\begin{equation}\label{eqn:numer_at_y'}
    C_n(t;x,y') - D_n(t;x,y')H^*(t;x,0)
\end{equation}
is analytic (as a function of $t$) in the region of $\rho(x,y')$, so \eqref{eqn:numer_at_y'} must vanish polynomially at that point. Since the denominator of~\eqref{eqn:hp_generic_soln} has a simple root at $t=\rho(x,y')$ (\cref{lem:domsing_B=1}), the dominant singularity at $y'$ must be strictly greater than $\rho(x,y')$. But $\rho$ is continuous, so this implies that there is an $\epsilon>0$ such that the dominant singularity of $H_n(x,y')$ is greater than that of $H_n(x,y'-\epsilon)$, contradicting the fact that this singularity must be non-increasing in $y$.

For the other three generating functions, note that their dominant singularities cannot be any closer to 0 than $\rho(x,y)$, as this would imply a larger growth rate than the same walks in the full-plane, an impossibility. To show that they are not farther from 0 than $\rho(x,y)$, let $\gamma$ be the shortest sequence of steps which can follow a north step and ends with an $\theta$ step, and let $\Gamma$ be the contribution to generating functions of $\gamma$. Since $\gamma$ contains at most one south step, it can be attached to any walk ending with a north step in the upper half-plane. Thus
\begin{equation}
    \Gamma H_n(t;x,y) \leq H_\theta(t;x,y) \leq F_\theta(t;x,y),
\end{equation}
and so $H_\theta(t;x,y)$ must also diverge at $t=\rho(x,y)$, with a simple pole.
\end{proof}

When the vertical drift is negative, the numerator of~\eqref{eqn:hp_generic_soln} will disappear at $t=\rho(x,y)$. The dominant singularity is then at $\kappa(x)$, which is a square-root singularity of $\upsilon^-(t;x)$.

\begin{lemma}\label{lem:hp_irrat_negdrift}
For an aperiodic vertically unbounded rule $\R$, let $x\in(0,\infty)$ and take $y<\tau(x)$. Then the vertical drift $\delta_{\mathbf y}(x,y)$ of the model is negative. For a given step direction $\theta$ with half-plane generating function $H_\theta(t;x,y)$, the dominant singularity of $H_\theta$ is a square-root singularity at $t=\kappa(x)$. Near $t=\kappa(x)$, 
\begin{equation}\label{eqn:hp_irrat_negdrift_near_sing}
    H_\theta(t;x,y) \underset {t\to \kappa(x)}{\sim} c^*(x,y)+ c^-(x,y) \sqrt{1-t/\kappa(x)},
\end{equation}
where $c^*(x,y)$ is constant with respect to $t$ and
\begin{multline}
    c^-(x,y) = \sqrt{\frac{2}{\kappa(x)P^{(0,2)}(x,\tau(x))}}\cdot\frac{D_\theta(\kappa(x);x,y)}{D_\theta(\kappa(x);x,\tau(x))}\cdot\frac{1}{1-B_\theta(\kappa(x);x,y)}\\ \times\left(C_\theta^{(0,0,1)}(\kappa(x);x,\tau(x))-D_\theta^{(0,0,1)}(\kappa(x);x,\tau(x))\frac{C_\theta(\kappa(x);x,\tau(x))}{D_\theta(\kappa(x);x,\tau(x))}\right).
\end{multline}
Consequently
\begin{equation}
    \Theta^+_m(x,y) \sim-\frac{c^-(x,y)}{2\sqrt{\pi}m^{3/2}}\lambda(x)^m\qquad\text{as }m\to\infty,
\end{equation}
where $\lambda(x) = 1/\kappa(x)$.
\end{lemma}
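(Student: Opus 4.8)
The plan is to run the argument of \cref{lem:hp_irrat_posdrift} with the roles of the two candidate singularities $\rho(x,y)$ and $\kappa(x)$ interchanged. The drift claim is immediate: for $y<\tau(x)$, \cref{lem:shape_of_rhoF} tells us that $\rho(x,\cdot)$ is strictly increasing there (since $\tau(x)$ is its unique maximum on $(0,\infty)$), so $\tfrac{\partial}{\partial y}\rho(x,y)>0$, hence $\tfrac{\partial}{\partial y}\mu(x,y)<0$, and by \cref{lem:delta_ind_direction} $\delta_{\mathbf y}(x,y)=\tfrac{\partial}{\partial y}\log\mu(x,y)<0$.

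The heart of the argument is to show that for $y<\tau(x)$ the simple pole of \eqref{eqn:hp_generic_soln} at $t=\rho(x,y)$ is \emph{removable} (this is the mirror image of point (b) in the proof of \cref{lem:hp_irrat_posdrift}, where the analogous numerator was shown to be nonzero). By \cref{cor:solns_B=1_iny}, $\upsilon^-(t;x)$ is the inverse of the increasing branch of $\rho(x,\cdot)$ on $(0,\tau(x))$, so $\upsilon^-(\rho(x,y);x)=y$ for every $y\in(0,\tau(x))$. Substituting $t=\rho(x,y)$ into the identity $H^*(t;x,0)=C_\theta(t;x,\upsilon^-(t;x))/D_\theta(t;x,\upsilon^-(t;x))$ from \cref{lem:irrat_H0_soln} then gives $H^*(\rho(x,y);x,0)=C_\theta(\rho(x,y);x,y)/D_\theta(\rho(x,y);x,y)$, so the numerator $C_\theta(t;x,y)-D_\theta(t;x,y)H^*(t;x,0)$ of \eqref{eqn:hp_generic_soln} vanishes at $t=\rho(x,y)$. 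Since that zero is simple and the denominator $1-B_\theta(t;x,y)$ also has a simple zero there (\cref{lem:domsing_B=1}), $H_\theta$ extends analytically across $t=\rho(x,y)$.

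With $\rho(x,y)$ out of the way, I would locate the genuine dominant singularity at $t=\kappa(x)$. As in \cref{lem:hp_irrat_posdrift}, the rational functions $C_\theta, D_\theta, B_\theta$ (for fixed $x,y$) share the denominator $\det(\mathbf I-\mathbf I_\theta\hat\T^\top)$, so any pole they have at $\beta_\theta(x,y)$ cancels between numerator and denominator of \eqref{eqn:hp_generic_soln}; the only remaining singularity in the closed disc of radius $\kappa(x)$ is the one coming from $H^*(t;x,0)$, whose radius of convergence is exactly $\kappa(x)$ (at least $\kappa(x)$ by the argument in \cref{lem:hp_irrat_posdrift}, and not larger because the branch point of $\upsilon^-(t;x)$ at $\kappa(x)$ survives composition with $C_\theta/D_\theta$). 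Expanding the implicit equation $B_\theta(t;x,y)=1$ about $(t,y)=(\kappa(x),\tau(x))$, using $B_\theta^{(0,0,1)}(\kappa(x);x,\tau(x))=0$ and $B_\theta^{(0,0,2)}(\kappa(x);x,\tau(x))>0$ (both from the proof of \cref{lem:shape_of_rhoF}), yields the Puiseux expansion
\begin{equation*}
\upsilon^-(t;x)\underset{t\to\kappa(x)}{=}\tau(x)-\sqrt{\tfrac{2}{\kappa(x)P^{(0,2)}(x,\tau(x))}}\,\sqrt{1-t/\kappa(x)}+O\!\left(1-t/\kappa(x)\right),
\end{equation*}
whose square-root coefficient is read off from the Hessian of $B_\theta$ at $(\kappa(x),\tau(x))$. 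Feeding this into $H^*=C_\theta(t;x,\upsilon^-)/D_\theta(t;x,\upsilon^-)$ by Taylor expanding $C_\theta, D_\theta$ in their last argument about $y=\tau(x)$ (valid since $\beta_\theta(x,\tau(x))>\kappa(x)$, so those functions are analytic there), and then substituting into \eqref{eqn:hp_generic_soln}—whose denominator $1-B_\theta(\kappa(x);x,y)$ is nonzero because $\rho(x,y)<\kappa(x)$—produces exactly the stated local form $H_\theta(t;x,y)\sim c^*(x,y)+c^-(x,y)\sqrt{1-t/\kappa(x)}$ with the displayed $c^-(x,y)$; I would do $\theta=n$ first and get the other three either by the same computation or by the sandwiching $\Gamma H_n(t;x,y)\le H_\theta(t;x,y)\le F_\theta(t;x,y)$ used at the end of \cref{lem:hp_irrat_posdrift}. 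Finally, singularity analysis (\cite{Flajolet2009Analytic}) in a $\Delta$-domain at $\kappa(x)$ gives $[t^m]\sqrt{1-t/\kappa(x)}\sim-\tfrac{1}{2\sqrt{\pi}}m^{-3/2}\kappa(x)^{-m}$, and multiplying by $c^-(x,y)$ gives the claimed asymptotics for $\Theta^+_m(x,y)$.

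The main obstacle, exactly as in the positive-drift lemma, is the analytic-continuation bookkeeping of the third paragraph: verifying that \eqref{eqn:hp_generic_soln} has no further singularity in the closed disc of radius $\kappa(x)$—in particular controlling the pole of $B_\theta, C_\theta, D_\theta$ at $\beta_\theta(x,y)$ relative to $\kappa(x)$ and ruling out other real zeros of $1-B_\theta(t;x,y)$ in $(\rho(x,y),\kappa(x)]$—and checking that the branch point of $\upsilon^-$ is not killed, i.e. that the parenthesised factor in $c^-(x,y)$ is nonzero, which is what makes the singularity at $\kappa(x)$ genuinely of square-root type. The remaining local computations are routine.
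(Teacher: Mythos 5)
Your proposal is correct and follows essentially the same route as the paper's own proof: show the numerator of~\eqref{eqn:hp_generic_soln} vanishes at $t=\rho(x,y)$ (cancelling the simple zero of $1-B_\theta$), identify the next singularity as the branch point of $\upsilon^-(t;x)$ at $t=\kappa(x)$, perform the Puiseux expansion of $\upsilon^-$ there together with the Taylor expansion of $C_\theta/D_\theta$ in its last argument about $\tau(x)$, and finish with standard singularity analysis. You add a few details the paper leaves implicit (the explicit sign check for the drift, the observation that $\beta_\theta(x,\tau(x))>\kappa(x)$), and you correctly flag the two points the paper also glosses over — ruling out other singularities in $|t|\le\kappa(x)$ and checking the bracketed factor in $c^-$ is nonzero so the square-root branch point survives — but these are the same gaps present in the published proof rather than new ones you have introduced.
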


\begin{proof}
For $y<\tau(x)$, $\rho(x,y)$ and $\upsilon^-(t;x)$ are inverses -- that is, $\upsilon^-(\rho(x,y);x) = y$. Hence the numerator of $H_\theta(t;x,y)$,
\begin{equation}
    C_\theta(t;x,y) - D_\theta(t;x,y)\frac{C_\theta(t;x,\upsilon^-(t;x))}{D_\theta(t;x,\upsilon^-(t;x))},
\end{equation}
vanishes as $t\to\rho(x,y)$. Moreover, since $\rho(x,y)<\kappa(x)$ for $y<\tau(x)$, the numerator is analytic in the region of $t=\rho(x,y)$, and thus it has a zero of integral order there. This completely cancels out the zero of $1-B_\theta(t;x,y)$.

The next-smallest singularity then comes from $\upsilon^-(t;x)$, and is a square root singularity at $t=\kappa(x)$. Near that point, the numerator behaves like
\begin{multline}\label{eqn:irrat_negdrift_expand_num}
\left(C_\theta(\kappa(x);x,y) - D_\theta(\kappa(x);x,y)\frac{C_\theta(\kappa(x);x,\tau(x))}{D_\theta(\kappa(x);x,\tau(x))}\right) - \frac{D_\theta(\kappa(x);x,y)}{D_\theta(\kappa(x);x,\tau(x))}\\ \times\left(C_\theta^{(0,0,1)}(\kappa(x);x,\tau(x))-D_\theta^{(0,0,1)}(\kappa(x);x,\tau(x))\frac{C_\theta(\kappa(x);x,\tau(x))}{D_\theta(\kappa(x);x,\tau(x))}\right)\upsilon^-(t;x).
\end{multline}
Meanwhile as $t\to\kappa(x)$ we have
\begin{equation}\label{eqn:expand_upsilon-}
\upsilon^-(t;x) \underset {t\to \kappa(x)}{\sim} \tau(x) - \sqrt{\frac{2}{\kappa(x)\mu^{(0,2)}(x,\tau(x))}}\cdot\sqrt{1-t/\kappa(x)},
\end{equation}
where we still have $\mu(x,y) = 1/\rho(x,y)$. Combining~\eqref{eqn:irrat_negdrift_expand_num} and~\eqref{eqn:expand_upsilon-} gives~\eqref{eqn:hp_irrat_negdrift_near_sing}, where $c^*(x,y)$ is the sum of the terms with no $t$-dependence. The principles of singularity analysis~\cite{Flajolet2009Analytic}, and in particular the asymptotic form of the coefficients of a function with a singularity of square root type, then give the second part of the lemma.
\end{proof}

For rules with vertical drift exactly zero, we have essentially a combination of the results for positive and negative drifts. In this case, the square root singularity of the numerator and the simple zero of the denominator exactly coincide, resulting in a function which diverges at $t=\rho(x,y)$ as the reciprocal of a square root.

\begin{lemma}\label{lem:hp_irrat_drift0}
For an aperiodic vertically unbounded rule $\R$, let $x\in(0,\infty)$ and take $y=\tau(x)$. Then the vertical drift $\delta_{\mathbf y}(x,y)$ of the model is exactly 0. The radius of convergence of $H_\theta(t;x,y)$ for each step direction $\theta$ is $\rho(x,y)$; near $t=\rho(x,y)$ we have
\begin{equation}\label{eqn:hp_irrat_drift0_sing}
    H_\theta(t;x,y) \underset {t\to \rho(x,y)}{\sim} c^0(x,y) (1-t/\rho(x,y))^{-1/2}
\end{equation}
where
\begin{multline}
    c^0(x,y) = \sqrt{\frac{2}{\kappa(x)P^{(0,2)}(x,\tau(x))}}\cdot\frac{1}{\kappa(x)B_\theta^{(1,0,0)}(\kappa(x);x,\tau(x))}\\ \times\left(C_\theta^{(0,0,1)}(\kappa(x);x,\tau(x))-D_\theta^{(0,0,1)}(\kappa(x);x,\tau(x))\frac{C_\theta(\kappa(x);x,\tau(x))}{D_\theta(\kappa(x);x,\tau(x))}\right).
\end{multline}
Thus
\begin{equation}
    \Theta^+_m(x,y) \sim \frac{c^0(x,y)}{\sqrt{\pi m}}\mu(x)^m \qquad\text{as }m\to\infty,
\end{equation}
where $\mu(x) = 1/\kappa(x) = 1/\rho(x,\tau(x))$.
\end{lemma}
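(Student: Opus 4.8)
The plan is to treat this as the critical case interpolating between \cref{lem:hp_irrat_posdrift} and \cref{lem:hp_irrat_negdrift}. When $y=\tau(x)$ the simple pole of $1/(1-B_\theta(t;x,y))$ at $t=\rho(x,y)$ and the square-root singularity that $H^*(t;x,0)$ inherits from $\upsilon^-(t;x)$ coincide, since $\rho(x,\tau(x))=\kappa(x)$ by \cref{cor:solns_B=1_iny}; dividing a quantity vanishing like $(1-t/\rho)^{1/2}$ by one vanishing like $(1-t/\rho)^{1}$ produces the asserted $(1-t/\rho)^{-1/2}$ behaviour. First I would dispatch the easy claims: $\tau(x)$ is by definition the unique point of $(0,\infty)$ at which $\partial_y\rho(x,y)=0$ (\cref{lem:shape_of_rhoF}, \cref{cor:solns_B=1_iny}), hence $\partial_y\mu(x,y)=0$ there, so by \cref{lem:delta_ind_direction} $\delta_{\mathbf y}(x,\tau(x))=\partial_y\log\mu(x,\tau(x))=0$; and $\kappa(x)=\rho(x,\tau(x))$ is again immediate from \cref{cor:solns_B=1_iny}.

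Next, the numerator. Write $\mathcal N(t)=C_\theta(t;x,\tau(x))-D_\theta(t;x,\tau(x))\,H^*(t;x,0)$ for the numerator of $H_\theta$ in \eqref{eqn:hp_generic_soln}. Since $\upsilon^-(\kappa(x);x)=\tau(x)$, \cref{lem:irrat_H0_soln} gives $H^*(\kappa(x);x,0)=C_\theta(\kappa(x);x,\tau(x))/D_\theta(\kappa(x);x,\tau(x))$, so $\mathcal N(\kappa(x))=0$: the constant term of $\mathcal N$ at $t=\kappa(x)$ cancels. To extract the singular term I would expand $H^*(t;x,0)=C_\theta(t;x,\upsilon^-(t;x))/D_\theta(t;x,\upsilon^-(t;x))$ about $t=\kappa(x)$, feeding in the square-root expansion \eqref{eqn:expand_upsilon-} of $\upsilon^-$. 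Since $C_\theta$ and $D_\theta$ are analytic in $t$ at $\kappa(x)$ (their radii of convergence exceed $\kappa(x)$, as in the proof of \cref{lem:hp_irrat_posdrift}), the only source of a $\sqrt{1-t/\kappa(x)}$ contribution is the $v$-derivative of $C_\theta/D_\theta$ composed with $\upsilon^-$. This is exactly the combination appearing in \eqref{eqn:irrat_negdrift_expand_num}; evaluated at $y=\tau(x)$ the bracket multiplying the analytic part collapses to $0$, leaving $\mathcal N(t)\sim-\big(C_\theta^{(0,0,1)}-D_\theta^{(0,0,1)}C_\theta/D_\theta\big)(\kappa(x);x,\tau(x))\cdot\big(\upsilon^-(t;x)-\tau(x)\big)$, a leading term of order $(1-t/\kappa(x))^{1/2}$.

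Finally, I would combine this with the denominator. By \cref{lem:domsing_B=1} the zero of $1-B_\theta(t;x,\tau(x))$ at $t=\rho(x,\tau(x))=\kappa(x)$ is simple, with $1-B_\theta(t;x,\tau(x))\sim\kappa(x)B_\theta^{(1,0,0)}(\kappa(x);x,\tau(x))(1-t/\kappa(x))$. Taking the quotient and substituting \eqref{eqn:expand_upsilon-} for $\upsilon^-(t;x)-\tau(x)$ yields \eqref{eqn:hp_irrat_drift0_sing} with $c^0(x,y)$ as stated, the factor $\sqrt{2/(\kappa(x)P^{(0,2)}(x,\tau(x)))}$ coming directly from \eqref{eqn:expand_upsilon-}. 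That $t=\kappa(x)$ is genuinely the dominant singularity follows, exactly as in \cref{lem:hp_irrat_posdrift}, from the radius-of-convergence bounds on $C_\theta$, $D_\theta$ and $H^*$ together with a concatenation lower bound forcing $H_\theta$ to diverge there. The coefficient asymptotics are then read off from the standard transfer theorem for a $(1-t/\rho)^{-1/2}$ singularity \cite{Flajolet2009Analytic}, using $\Gamma(1/2)=\sqrt\pi$, giving $\Theta^+_m(x,y)\sim c^0(x,y)(\pi m)^{-1/2}\mu(x)^m$ with $\mu(x)=1/\kappa(x)=1/\rho(x,\tau(x))$.

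The main obstacle I expect is the bookkeeping of the composite square-root expansion of $H^*(t;x,0)$: one must confirm that the constant term of $\mathcal N$ really cancels (so the leading singular exponent is exactly $-1/2$, not something milder) and that the $(1-t/\kappa(x))^{1/2}$ coefficient is precisely the nonzero quantity quoted. A secondary point is making the ``no closer singularity'' argument airtight at the merged point, since it must simultaneously exclude extra zeros of $1-B_\theta$ and singularities of the numerator on $|t|\le\kappa(x)$, exactly as in the positive-drift case but now with the pole and branch point on top of one another.
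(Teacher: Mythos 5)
Your proposal is correct and follows essentially the same route as the paper's own (quite terse) proof: expand $\upsilon^-(t;x)$ via \eqref{eqn:expand_upsilon-}, use \eqref{eqn:irrat_negdrift_expand_num} to see the numerator vanish like $(1-t/\kappa(x))^{1/2}$ at $y=\tau(x)$ while $1-B_\theta$ has a simple zero, and multiply. You spell out the cancellations, the identification of the drift via \cref{lem:delta_ind_direction}, and the dominance-of-singularity argument more explicitly than the paper, but these are the same ideas the paper leans on by reference to the preceding two lemmas.
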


\begin{proof}
As $t\to\kappa(x)$, the numerator of $H_\theta(t;x,y)$ behaves like~\eqref{eqn:irrat_negdrift_expand_num} (note that at $y=\tau(x)$ that expression simplifies), while the denominator behaves like
\begin{equation}
    \frac{1}{\kappa(x)B_\theta^{(1,0,0)}(\kappa(x);x,\tau(x))} \cdot \frac{1}{1-t/\kappa(x)}.
\end{equation}
Multiplying the two expressions gives~\eqref{eqn:hp_irrat_drift0_sing}. Standard singularity analysis techniques then provide the rest of the lemma.
\end{proof}

\subsection{Location of the endpoint}\label{ssec:hp_endpoint}

For walks restricted to the upper half-plane, we can perform similar calculations to those of \cref{ssec:endpoint} to determine the expected coordinates of the endpoint in the limit of walk length. However, in the upper half-plane there are many more cases to consider, and providing all the precise calculations here would be overly tedious. We instead make some broad observations without proof.
\begin{itemize}
\item For rules with positive vertical drift $\delta_{\mathbf y}$, the picture in the upper half-plane is, as one might expect, similar to the full plane. The expected $\mathbf y$-coordinate $\mathbb{E}_m(\mathbf{y};x,y)$ still scales as $O(m)$, and it is in fact easy to show that the leading asymptotic behaviour of $\mathbb{E}_m(\mathbf{y};x,y)$ is the same as in the full plane. Similarly, if $\mathbb{E}_m(\mathbf{x};x,y)$ has leading asymptotic term $O(m)$ in the full plane, this carries over (with the same constant factor) to the half-plane. If instead $\mathbb{E}_m(\mathbf{x};x,y)$ is $O(1)$ in the full plane, it will be in the half-plane too, though with possibly a different constant term.
\item For rules with zero vertical drift, the expected $\mathbf y$-coordinate scales as $O(\sqrt{m})$. The expected $\mathbf x$-coordinate $\mathbb{E}_m(\mathbf{x};x,y)$ can be $O(m)$, $O(\sqrt{m})$ or $O(1)$, depending on the rule in question.
\item Finally, rules with negative vertical drift will have $\mathbb{E}_m(\mathbf{y};x,y)$, while $\mathbb{E}_m(\mathbf{x};x,y)$ can be $O(m)$ or $O(1)$.
\end{itemize}

\section{Enumeration in the quarter plane}\label{sec:quarter_plane}

We finally turn to walks obeying two-step rules in the quarter plane $\mathbf{x},\mathbf{y} \geq 0$.

\subsection{Non-trivial models and isomorphisms}\label{ssec:qp_nontriv}

As for the full and half planes, we will  attempt to determine just how many `interesting' models there are. Firstly, let $G'_2$ denote the group comprising the identity $e$ and the element $\sigma'$ which swaps east and north steps and swaps south and west steps.

In the upper half plane we did not want walks which were forced to stay near the $\mathbf{x}$-axis or were forced to move away from it. This motivated the definition of vertically unbounded models. In the quarter plane we now also want to exclude walks which are stuck to the $\mathbf{y}$-axis or which are forced to leave it. Thus, we define a model to be \emph{east-bound} if there is always an east step between two west steps, and \emph{west-bound} if there is always a west step between two east steps. A model is then \emph{horizontally unbounded} if it is neither east-bound nor west-bound, and \emph{cardinally unbounded} if it is both vertically and horizontally unbounded.

With $\mathcal{U}$ denoting the set of cardinally unbounded models, we have the following.

\begin{lemma}
The number of cardinally unbounded models, as well as aperiodic and cardinally unbounded, is
\begin{equation}
    |\mathcal{U}| = 14978 \qquad \text{and} \qquad |\mathcal{U}\cap\mathcal{A}| = 14943.
\end{equation}
Counting up to the symmetry of the quarter plane, we have
\begin{equation}
    N_{G'_2}(\mathcal{U}) = 7541 \qquad \text{and} \qquad N_{G'_2}(\mathcal{U}\cap\mathcal{A}) = 7520.
\end{equation}
\end{lemma}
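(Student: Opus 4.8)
The proof is a finite computation over the $2^{16}$ transfer matrices, carried out in \textsc{Mathematica} exactly as in \cref{lem:number_of_connected}, \cref{lem:fp_num_noniso} and \cref{lem:isos_hp}. The first step is to express membership in $\mathcal U$ as a predicate on $\T$. A connected rule lies in $\mathcal U$ iff it is vertically unbounded — neither north-bound nor south-bound — and horizontally unbounded — neither east-bound nor west-bound. The vertical conditions are those already used in \cref{sec:halfplane}: north-bound iff $((\mathbf I_n\T\mathbf I_n)^k)_{ss}=0$ for all $k$, and south-bound iff $((\mathbf I_s\T\mathbf I_s)^k)_{nn}=0$ for all $k$. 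The horizontal conditions are obtained from these by the relabelling $n\mapsto e$, $s\mapsto w$ coming from the diagonal symmetry of the quarter plane: east-bound iff $((\mathbf I_e\T\mathbf I_e)^k)_{ww}=0$ for all $k$, and west-bound iff $((\mathbf I_w\T\mathbf I_w)^k)_{ee}=0$ for all $k$. In every case it is enough to test $k\in\{1,2,3\}$, by the shortest-path argument already used in the half plane: a shortest nonempty closed walk at a vertex $v$ of the step digraph that avoids a prescribed vertex $v'$ visits only $v$ (at both ends) and the two remaining vertices, each at most once, so it has length at most $3$. Combined with the connectedness test of \cref{lem:number_of_connected} (some $(\T^k)_{ij}>0$ with $k\le4$) and, for $\mathcal U\cap\mathcal A$, the primitivity test of \cref{lem:number_of_aperiodic} ($\T^{10}>0$, by Wielandt's bound), this gives explicit predicates for both sets.

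Evaluating these predicates over all $2^{16}$ matrices gives $|\mathcal U|=14978$ and $|\mathcal U\cap\mathcal A|=14943$. As a consistency check one may note that the diagonal symmetry makes the set of connected horizontally unbounded rules have the same size $19328$ as $\mathcal V$; since $\mathcal U$ is the intersection of two such sets this observation alone does not determine $|\mathcal U|$, so the direct enumeration is what is used.

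For the non-isomorphic counts, the relevant group is $G'_2=\{e,\sigma'\}$: the only non-trivial isometry of the quadrant $\x,\y\ge0$ fixing $\mathbb Z^2$ is the reflection in the line $\y=\x$, which interchanges the two axes and hence induces the step-set permutation $\sigma'$ swapping east with north and west with south, so $G'_2\cong\mathbb Z/2$. Since $\sigma'$ exchanges ``vertically bounded'' with ``horizontally bounded'' it preserves cardinal unboundedness, and it clearly preserves connectedness and aperiodicity, so $G'_2$ acts on $\mathcal U$ and on $\mathcal U\cap\mathcal A$. Burnside's lemma then gives
\begin{equation}
N_{G'_2}(\mathcal U)=\tfrac12\big(|\mathcal U|+f_{\mathcal U}(\sigma')\big),\qquad
N_{G'_2}(\mathcal U\cap\mathcal A)=\tfrac12\big(|\mathcal U\cap\mathcal A|+f_{\mathcal U\cap\mathcal A}(\sigma')\big),
\end{equation}
where $f_X(\sigma')$ is the number of rules in $X$ whose transfer matrix is invariant under the simultaneous relabelling $e\leftrightarrow n$, $w\leftrightarrow s$. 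Such a matrix is determined by $8$ binary entries, so there are only $2^8=256$ of them; running the predicates above on this small set gives $f_{\mathcal U}(\sigma')=104$ and $f_{\mathcal U\cap\mathcal A}(\sigma')=97$, whence $N_{G'_2}(\mathcal U)=7541$ and $N_{G'_2}(\mathcal U\cap\mathcal A)=7520$.

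There is no real obstacle here — the content is the same kind of bookkeeping as in the earlier lemmas — but the one point needing care is the translation in the first step: that ``there is always a $\phi$ step between two $\psi$ steps'' is correctly encoded by ``no nonempty closed walk at $\psi$ in the step digraph avoids $\phi$'', that $\mathbf I_\theta\T\mathbf I_\theta$ deletes row and column $\theta$, and that $k\le3$ genuinely suffices. Each of these is a direct transcription of the half-plane argument invoked in \cref{lem:isos_hp}. Carrying out the fixed-point counts of the last step by hand rather than by machine would be tedious but entirely elementary.
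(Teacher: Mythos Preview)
Your proposal is correct and follows exactly the computational approach the paper uses for this and the surrounding lemmas: it applies the boundedness predicates (now in both coordinate directions), the connectedness and primitivity tests, and Burnside's lemma with the group $G'_2$. The paper in fact gives no explicit proof for this lemma, so your write-up is precisely the argument the reader is expected to reconstruct from the machinery of \cref{lem:number_of_connected,lem:number_of_aperiodic,lem:fp_num_noniso,lem:isos_hp}; the fixed-point counts $f_{\mathcal U}(\sigma')=104$ and $f_{\mathcal U\cap\mathcal A}(\sigma')=97$ are consistent with the stated totals.
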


However, by simply considering cardinally unbounded models we still include a number of undesirable cases. Consider for instance the following model:
\begin{figure}[H]
    \centering
    \begin{tikzpicture}[scale=0.8]
    \node at (-6,-0.5) {$\displaystyle T = \begin{pmatrix}
  0 & 1 & 0 & 0 \\
  1 & 0 & 0 & 1 \\
  0 & 0 & 0 & 1 \\
  1 & 0 & 1 & 1
  \end{pmatrix}$};
    
    \node [draw, circle, fill, inner sep=2pt] at (1,0) {};
\node [draw, circle, fill, inner sep=2pt] at (0,1) {};
\node [draw, circle, fill, inner sep=2pt] at (-1,0) {};
\node [draw, circle, fill, inner sep=2pt] at (0,-1) {};
\draw [line width=1.5pt, -Latex] (1,0) -- (1,1);
\draw [line width=1.5pt, -Latex] (0,1) -- (1,1);
\draw [line width=1.5pt, -Latex] (0,1) -- (0,0);
\draw [line width=1.5pt, -Latex] (-1,0) -- (-1,-1);
\draw [line width=1.5pt, -Latex] (0,-1) -- (1,-1);
\draw [line width=1.5pt, -Latex] (0,-1) -- (-1,-1);
\draw [line width=1.5pt, -Latex] (0,-1) -- (0,-2);
    \end{tikzpicture}
\end{figure}
It is easy to see that this model is cardinally unbounded, so that walks following this rule can go arbitrarily far in the four cardinal directions. However, observe that every north step must always be followed by an east or south step, and every west step must always be followed by a south step. Thus any walk following this rule (say, in the full plane, where the first step can be in any direction) can never step above the line $\mathbf{y} = \mathbf{x}+1$.

For such a model, restricting to the upper half plane is almost the same as restricting to the first quadrant -- the only difference being that a half plane walk can visit the point $(-1,0)$. This is not enough of a difference to make these models worth counting in the quadrant. We thus introduce a new definition. A model is \emph{south-east-bound} if \emph{all} of the following are zero:
\begin{equation}
    \mathbf{T}_{nn} = \mathbf{T}_{ww} = \mathbf{T}_{nw}\mathbf{T}_{wn} = \mathbf{T}_{ne}\mathbf{T}_{ew}\mathbf{T}_{wn} = \mathbf{T}_{nw}\mathbf{T}_{we}\mathbf{T}_{en} = \mathbf{T}_{wn}\mathbf{T}_{ns}\mathbf{T}_{sw} = \mathbf{T}_{ws}\mathbf{T}_{sn}\mathbf{T}_{nw} = 0.
\end{equation}

Similarly, a model is \emph{north-west-bound} if all of the following are zero:
\begin{equation}
    \mathbf{T}_{ee} = \mathbf{T}_{ss} = \mathbf{T}_{es}\mathbf{T}_{se} = \mathbf{T}_{en}\mathbf{T}_{ns}\mathbf{T}_{se} = \mathbf{T}_{es}\mathbf{T}_{sn}\mathbf{T}_{ne} = \mathbf{T}_{se}\mathbf{T}_{ew}\mathbf{T}_{ws} = \mathbf{T}_{sw}\mathbf{T}_{we}\mathbf{T}_{es} = 0.
\end{equation}
Likewise, a model is \emph{south-west-bound} if all of the following are zero:
\begin{equation}
    \mathbf{T}_{ee} = \mathbf{T}_{nn} = \mathbf{T}_{ne}\mathbf{T}_{en} = \mathbf{T}_{en}\mathbf{T}_{ns}\mathbf{T}_{se} = \mathbf{T}_{es}\mathbf{T}_{sn}\mathbf{T}_{ne} = \mathbf{T}_{ne}\mathbf{T}_{ew}\mathbf{T}_{wn} = \mathbf{T}_{nw}\mathbf{T}_{we}\mathbf{T}_{en} = 0.
\end{equation}
(Restricting a south-west-bound model to the first quadrant results in walks that can only visit the vertices $(0,0), (1,0)$ and $(0,1)$.)

We will then say a model is \emph{diagonally unbounded} if it is neither north-west-bound, south-east-bound or south-west bound. Let $\mathcal{D}$ denote the set of such rules.

\begin{remark}
We could also forbid north-east-bound models, which would be forced to move away from the origin in the first quadrant. However the quarter plane restriction is meaningful for these models, and in the literature on regular quarter plane lattice paths, models with this property are not forbidden.
\end{remark}

We have the following.

\begin{lemma}
    The number of cardinally and diagonally unbounded models, as well as the number of those which are aperiodic, is
    \begin{equation}
      |\mathcal{U}\cap\mathcal{D}| = 14209 \qquad \text{and} \qquad |\mathcal{U}\cap\mathcal{D}\cap\mathcal{A}| = 14205.
    \end{equation}
    Counting up to the symmetry of the quarter plane, we have
    \begin{equation}
        N_{G'_2}(\mathcal{U}\cap\mathcal{D}) = 7149 \qquad \text{and} \qquad  N_{G'_2}(\mathcal{U}\cap\mathcal{D}\cap\mathcal{A}) = 7146.
    \end{equation}
\end{lemma}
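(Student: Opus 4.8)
The proof is a finite computation, carried out in the same spirit as \cref{lem:fp_num_noniso} and \cref{lem:isos_hp}; we already have the set $\mathcal{U}$ of cardinally unbounded models from the preceding lemma. To obtain $\mathcal{U}\cap\mathcal{D}$, the plan is, for each $\T\in\mathcal{U}$, to evaluate the seven monomials in the entries of $\T$ that define the south-east-bound property, and likewise the seven monomials for north-west-bound and the seven for south-west-bound. A rule lies in $\mathcal{D}$ precisely when at least one monomial in each of the three lists is nonzero; discarding the rest from $\mathcal{U}$ leaves $\mathcal{U}\cap\mathcal{D}$, whose cardinality one computes to be $14209$. Intersecting with $\mathcal{A}$ then amounts to testing primitivity of each surviving transfer matrix: by the Wielandt bound $(n-1)^2+1 = 10$ invoked earlier, $\T$ is primitive iff every entry of $\T^{10}$ is positive, and the rules passing this test number $14205$.

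For the counts up to symmetry I would apply Burnside's lemma to the order-two group $G'_2 = \{e,\sigma'\}$. The point that has to be checked first is that $\sigma'$ stabilises the set being counted, i.e.\ that it maps $\mathcal{U}\cap\mathcal{D}$ (resp.\ $\mathcal{U}\cap\mathcal{D}\cap\mathcal{A}$) to itself. Geometrically $\sigma'$ is the reflection across the diagonal $\mathbf{y}=\mathbf{x}$: swapping east with north and south with west, it interchanges ``vertical'' with ``horizontal'' bounds and hence preserves cardinal unboundedness; a short check of the three defining lists shows that it interchanges the south-east-bound and north-west-bound families and fixes the south-west-bound family, so it preserves diagonal unboundedness; and, being a relabelling of the step set, it preserves aperiodicity. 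Since $|G'_2|=2$, Burnside then gives $N_{G'_2}(X) = \tfrac12\bigl(|X| + f_X(\sigma')\bigr)$, where $f_X(\sigma')$ counts the $\T\in X$ invariant under the simultaneous permutation of rows and columns induced by $\sigma'$ (equivalently $\T_{ij}=\T_{\sigma'(i)\sigma'(j)}$ for all $i,j$); enumerating these fixed transfer matrices within each of the two sets and substituting yields $N_{G'_2}(\mathcal{U}\cap\mathcal{D}) = 7149$ and $N_{G'_2}(\mathcal{U}\cap\mathcal{D}\cap\mathcal{A}) = 7146$.

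There is no real conceptual obstacle: the statement is a bookkeeping exercise, and as with the earlier counting lemmas the arithmetic is best left to a computer (we use \textsc{Mathematica}), with the code made available. The only points requiring a little care are the faithful translation of the monomial conditions defining the three diagonally-bounded families into the computation, and the verification sketched above that $\sigma'$ genuinely stabilises the restricted sets $\mathcal{U}\cap\mathcal{D}$ and $\mathcal{U}\cap\mathcal{D}\cap\mathcal{A}$, so that Burnside's lemma may be applied to these sets directly rather than to all of $\mathcal{T}$.
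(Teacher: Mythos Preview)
Your proposal is correct and follows essentially the same approach as the paper: the lemma is stated there without an explicit proof, being one of several computational counts obtained by filtering the transfer matrices according to the stated monomial conditions and then applying Burnside's lemma with the group $G'_2$, exactly as in the earlier \cref{lem:fp_num_noniso} and \cref{lem:isos_hp}. Your additional verification that $\sigma'$ interchanges the south-east-bound and north-west-bound families and fixes the south-west-bound family is a nice touch that the paper leaves implicit.
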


Even with these restrictions, there is still another subtle problem that can occur. Consider for instance the following model:
\begin{figure}[H]
    \centering
    \begin{tikzpicture}[scale=0.8]
    \node at (-6,0.5) {$\displaystyle T = \begin{pmatrix}
  0 & 0 & 0 & 1 \\
  0 & 1 & 1 & 0 \\
  1 & 1 & 1 & 0 \\
  1 & 1 & 1 & 0
  \end{pmatrix}$};
    
   \node [draw, circle, fill, inner sep=2pt] at (1,0) {};
\node [draw, circle, fill, inner sep=2pt] at (0,1) {};
\node [draw, circle, fill, inner sep=2pt] at (-1,0) {};
\node [draw, circle, fill, inner sep=2pt] at (0,-1) {};
\draw [line width=1.5pt, -Latex] (1,0) -- (1,-1);
\draw [line width=1.5pt, -Latex] (0,1) -- (0,2);
\draw [line width=1.5pt, -Latex] (0,1) -- (-1,1);
\draw [line width=1.5pt, -Latex] (-1,0) -- (0,0);
\draw [line width=1.5pt, -Latex] (-1,0) -- (-1,1);
\draw [line width=1.5pt, -Latex] (-1,0) -- (-2,0);
\draw [line width=1.5pt, -Latex] (0,-1) -- (1,-1);
\draw [line width=1.5pt, -Latex] (0,-1) -- (0,0);
\draw [line width=1.5pt, -Latex] (0,-1) -- (-1,-1);
    \end{tikzpicture}
\end{figure}
This model is cardinally and diagonally unbounded. However, in the quarter plane, we have a problem. If a walk starts with an east step then it can take no further steps; if it starts with a north step then the only option is to take another north step, and so on. Thus, walks following this rule can never leave the boundaries of the quarter plane.

For a walk to be able to leave the $\mathbf{x}$-axis (away from the $\mathbf{y}$-axis), it must be able to step east-north or both east-east and east-west-north. Similarly, for a walk to leave the $\mathbf{y}$-axis (away from the $\mathbf{x}$-axis), it must be able to step north-east or both north-north and north-south-east. This motivates the following definition. A two-step rule is \emph{glued} if all of the following are zero:
\begin{equation}
    \mathbf{T}_{en} = \mathbf{T}_{ee}\mathbf{T}_{ew}\mathbf{T}_{wn} = \mathbf{T}_{ne} = \mathbf{T}_{nn}\mathbf{T}_{ns}\mathbf{T}_{se} = 0.
\end{equation}

Let $\mathcal{G}$ denote the set of glued rules and $\mathcal{G}'$ its complement.
We have the following.

\begin{lemma}
Let $\mathcal{Q} = \mathcal{U} \cap \mathcal{D} \cap \mathcal{G}'$. Then
\begin{equation}
    |\mathcal{Q}| = 13749 \qquad \text{and} \qquad |\mathcal{Q} \cap \mathcal{A}| = 13745.
\end{equation}
Counted up to the symmetry of the quarter plane,
\begin{equation}
    N_{G'_2}(\mathcal{Q}) = 6912 \qquad \text{and} \qquad N_{G'_2}(\mathcal{Q} \cap \mathcal{A}) = 6909.
\end{equation}
\end{lemma}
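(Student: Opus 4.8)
The plan is to reduce all four counts to a finite enumeration over the $2^{16}$ transfer matrices, followed by a single application of Burnside's lemma.

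First I would note that membership of a rule $\R$ in each of $\mathcal{U}$, $\mathcal{D}$, $\mathcal{G}$ and $\mathcal{A}$ is decidable by a bounded test on the $4\times4$ matrix $\T$ over $\{0,1\}$. Cardinal unboundedness is checked exactly as vertical unboundedness was for $\mathcal{V}$: one evaluates $((\mathbf{I}_n\T\mathbf{I}_n)^k)_{ss}$ and $((\mathbf{I}_s\T\mathbf{I}_s)^k)_{nn}$, together with the two horizontal analogues, for $k=1,2,3$. Diagonal unboundedness and non-gluedness are read directly from the explicit monomial conditions defining the south-east-, north-west-, south-west-bound and glued properties. Aperiodicity is equivalent to $\T^{10}$ being entrywise positive, by the Wielandt bound already invoked in the proof of \cref{lem:number_of_aperiodic}. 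Applying all of these filters to the $65536$ matrices then yields $|\mathcal{Q}| = 13749$ and $|\mathcal{Q}\cap\mathcal{A}| = 13745$; this is a short \textsc{Mathematica} computation, of the same kind used for \cref{lem:isos_hp}.

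Next I would treat the counts up to symmetry. The group $G'_2=\{e,\sigma'\}$ acts on the set $\mathcal{T}$ of all rules, where $\sigma'$ is the reflection of the plane across the line $\y=\x$, realised on step types by the transposition that swaps east with north and south with west. I would check that $\sigma'$ sends ``east-bound'' and ``west-bound'' to ``north-bound'' and ``south-bound'' and back, hence preserves cardinal unboundedness; that it interchanges ``south-east-bound'' with ``north-west-bound'' and fixes ``south-west-bound'', hence preserves diagonal unboundedness; and that it permutes the four defining monomials of gluedness among themselves, hence preserves $\mathcal{G}'$. Since any relabelling of the step set preserves primitivity of $\T$, the $G'_2$-action restricts to both $\mathcal{Q}$ and $\mathcal{Q}\cap\mathcal{A}$. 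Burnside's lemma then gives $N_{G'_2}(X) = \tfrac12\bigl(|X|+f_X(\sigma')\bigr)$, where $f_X(\sigma')$ counts the rules in $X$ whose transfer matrix is invariant under the simultaneous interchange of the east/north rows and columns and of the south/west rows and columns. The same enumeration returns $f_{\mathcal{Q}}(\sigma')=75$ and $f_{\mathcal{Q}\cap\mathcal{A}}(\sigma')=73$, so $N_{G'_2}(\mathcal{Q}) = \tfrac12(13749+75) = 6912$ and $N_{G'_2}(\mathcal{Q}\cap\mathcal{A}) = \tfrac12(13745+73) = 6909$.

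There is no genuine mathematical obstacle; the work is bookkeeping. The one point that warrants care is the verification that $\sigma'$ really does preserve each of the three unboundedness conditions and non-gluedness --- this is exactly what is needed to license the application of Burnside's lemma to the restricted sets --- after which everything reduces to implementing the (already described) decidable criteria faithfully and trusting the machine count.
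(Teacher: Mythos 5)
Your overall approach matches the paper's (an enumeration over the $65536$ transfer matrices followed by Burnside's lemma with $G'_2 = \{e,\sigma'\}$; the paper itself gives no written proof beyond describing such a computation in \cref{ssec:isomorphisms_full,ssec:isomorphisms_hp}), and your Burnside arithmetic is self-consistent. However, your explicit filter list has a genuine omission: you never check \emph{connectedness} (membership in $\mathcal{C}$). The paper's \cref{lem:isos_hp} describes $\mathcal{V}$ as the ``vertically unbounded \emph{(connected)} models,'' and $\mathcal{U}$, $\mathcal{D}$ and $\mathcal{G}'$ are to be read the same way; without this filter, the count changes. Concretely, take
\begin{equation}
    \T = \begin{pmatrix}
    1 & 1 & 0 & 0 \\
    1 & 1 & 0 & 0 \\
    0 & 0 & 1 & 1 \\
    0 & 0 & 1 & 1
    \end{pmatrix},
\end{equation}
whose digraph has two strongly connected components $\{\text{e},\text{n}\}$ and $\{\text{w},\text{s}\}$. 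This rule is vertically and horizontally unbounded (n$\to$n, s$\to$s, e$\to$e, w$\to$w are all allowed, so none of the four ``$\theta$-bound'' conditions hold), it is diagonally unbounded (each of north-west-, south-east- and south-west-bound requires at least one of $\T_{ee}$, $\T_{nn}$, $\T_{ww}$, $\T_{ss}$ to vanish, and none do), and it is not glued ($\T_{en}=1$). So it passes every test you describe. But $(\T^k)_{ij} = 0$ for $i\in\{\text{e},\text{n}\}$, $j\in\{\text{w},\text{s}\}$ and all $k$, so it is disconnected, and indeed a quarter-plane walk obeying it never leaves the $\{\text{east},\text{north}\}$ component --- exactly the sort of degenerate model the paper means to exclude. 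Your account also needs this filter to license the claim that membership in $\mathcal{U}$ is preserved by $\sigma'$ (which is true, since any relabelling of step types preserves strong connectedness, but it must be said). Adding the connectedness check --- $(\T^k)_{ij}>0$ for some $k\le 4$, for every pair $(i,j)$, exactly as in \cref{ssec:isomorphisms_full} --- repairs the proposal; the rest of the bookkeeping, including the $f_X(\sigma')$ fixed-point counts and the verification that $\sigma'$ swaps the appropriate boundedness properties, is sound.
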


\subsection{Functional equations}\label{ssec:qp_func_eqns}

In the first quadrant we denote the partition function of walks of length $m$ ending with a $\theta$ step by $\Theta_m^\qpcorner(x,y)$, and we will write the corresponding generating function as
\begin{equation}
    Q_\theta(t;x,y) = \sum_m\Theta^\qpcorner_m(x,y) t^m.
\end{equation}

In the quarter plane walks cannot start with a west or south step, and they cannot take a west (resp.~south) step from a vertex on the $\mathbf y$-axis (resp.~$\mathbf{x}$-axis). We get something similar to~\eqref{eqn:hp_system_matrices}, but now with two sets of boundary terms:
\begin{equation}\label{eqn:qp_system_matrices}
    (\mathbf{I}-\hat\T^\top)\cdot\begin{pmatrix}Q_e(x,y) \\ Q_n(x,y) \\ Q_w(x,y) \\ Q_s(x,y)\end{pmatrix} = \begin{pmatrix}tx \\ ty \\ 0 \\ 0\end{pmatrix} - (\mathbf I - \mathbf I_s)\hat\T\cdot\begin{pmatrix}Q_e(x,0) \\ Q_n(x,0) \\ Q_w(x,0) \\ Q_s(x,0)\end{pmatrix} - (\mathbf I - \mathbf I_w)\hat\T\cdot\begin{pmatrix}Q_e(0,y) \\ Q_n(0,y) \\ Q_w(0,y) \\ Q_s(0,y)\end{pmatrix}.
\end{equation}

We once again turn to a combinatorial construction.

\begin{theorem}\label{thm:quarterplane_combinatorial}
Each of the generating functions $Q_\theta$ satisfies an equation of the form
\begin{equation}
Q_\theta(x,y) = L_\theta(x,y) + B_\theta(x,y)Q_\theta(x,y) - D_\theta(x,y)Q^\downarrow(x) - J_\theta(x,y)Q^\leftarrow(y),
\end{equation}
where
\begin{itemize}
    \item $B_\theta$ is as before, ie.~the generating function of walks (in the full plane) which start with a step that can follow $\theta$ and contain no $\theta$ steps except for their last;
    \item $L_\theta$ is the generating function of walks (in the full plane) which start with an east or north step and contain no $\theta$ steps except for their last;
    \item $D_\theta$ is as before, ie.~the generating function of walks (in the full plane) which start with a south step and contain no $\theta$ steps except for their last;
    \item $J_\theta$ is like $D_\theta$ except it counts walks starting with a west step;
    \item $Q^\downarrow(x)$ is the quarter plane version of $H^*(x,0)$, ie.~the generating function of walks in the quarter plane which end on the $\mathbf{x}$-axis, with a step type that can precede south; and
    \item $Q^\leftarrow(y)$ is the generating function of walks in the quarter plane which end on the $\mathbf{y}$-axis, with a step type that can precede west.
\end{itemize}
\end{theorem}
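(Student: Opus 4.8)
The plan is to follow the combinatorial template behind \cref{thm:fullplane_combinatorial,thm:halfplane_combinatorial}, now excising walks that leave the quadrant through either the $\mathbf{x}$-axis or the $\mathbf{y}$-axis. I would work with the \emph{master class} $\mathcal{M}_\theta$ of full-plane walks that start with an east or north step, end with a $\theta$ step, and have the property that \emph{if} they contain two or more $\theta$ steps then the prefix up to and including the second-last $\theta$ step is a quarter-plane walk. Decomposing a member of $\mathcal{M}_\theta$ at its second-last $\theta$ step (when it has one), exactly as in the proof of \cref{thm:fullplane_combinatorial}, shows that $\mathcal{M}_\theta$ is enumerated by $L_\theta(x,y)+B_\theta(x,y)Q_\theta(x,y)$: the members with a single $\theta$ step are precisely the full-plane walks starting east or north with the $\theta$ step last, counted by $L_\theta$; and those with at least two $\theta$ steps factor uniquely as a quarter-plane prefix ending in $\theta$ (counted by $Q_\theta$) followed by a suffix that starts in $f(\theta)$ and has its $\theta$ step last (counted by $B_\theta$). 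Now a member of $\mathcal{M}_\theta$ is a genuine quarter-plane walk ending in $\theta$ exactly when it never leaves the quadrant, so $Q_\theta = L_\theta + B_\theta Q_\theta$ minus the generating function of the \emph{bad} members of $\mathcal{M}_\theta$, and the task reduces to identifying that correction with $D_\theta(x,y)Q^\downarrow(x) + J_\theta(x,y)Q^\leftarrow(y)$.

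To evaluate the correction I would classify each bad walk $w\in\mathcal{M}_\theta$ by its \emph{first exit} from the quadrant. Since east and north steps never decrease a coordinate, the first step of $w$ that lands outside $\mathbf{x},\mathbf{y}\geq 0$ is either a south step taken from the $\mathbf{x}$-axis or a west step taken from the $\mathbf{y}$-axis, and it cannot be both. In the first case write $w=u\circ v$ with $u$ the prefix preceding that south step and $v$ the remainder; then $u$ is a quarter-plane walk ending on the $\mathbf{x}$-axis with a last step that can precede south, hence counted by $Q^\downarrow(x)$, while $v$ starts with a south step. The point that makes this work is that the first exit lies strictly after the second-last $\theta$ step of $w$ whenever $w$ has at least two $\theta$ steps --- this is exactly the clause built into the definition of $\mathcal{M}_\theta$ --- and when $w$ has a single $\theta$ step there is no such step to reconcile; either way $v$ carries exactly one $\theta$ step, its last, and is therefore counted by $D_\theta(x,y)$. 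Thus the bad walks whose first exit is through the $\mathbf{x}$-axis contribute $D_\theta(x,y)Q^\downarrow(x)$, and by the mirror-image argument those whose first exit is through the $\mathbf{y}$-axis contribute $J_\theta(x,y)Q^\leftarrow(y)$. Conversely every concatenation counted by either product is easily seen to be a bad member of $\mathcal{M}_\theta$ with the matching type of first exit, so the two products enumerate disjoint families whose union is all of the bad walks; adding everything up gives the claimed functional equation.

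The one genuinely new point relative to the half-plane case, and the step I would treat most carefully, is the disjointness and completeness of the two correction terms: the decomposition has to be organised around the \emph{first} exit from the quadrant rather than, say, the first return to one axis versus the other, so that a walk which crosses one axis and only afterwards reaches the other is subtracted once and only once --- past the first crossing its prefix is no longer a quarter-plane walk and so is invisible to the second term. The only subsidiary fact that needs a line of argument is that the first exit always lies in the $B_\theta$-suffix of the second-last-$\theta$ decomposition, which, as noted, is forced by the defining property of $\mathcal{M}_\theta$. As a cross-check one can instead derive the same equation purely formally from the linear system~\eqref{eqn:qp_system_matrices}, inverting $\mathbf{I}-\hat\T^\top$ as in \cref{lem:constructing_AF_BF,lem:constructing_CF_DF} and reading off the combinatorial content of each coefficient; the argument above simply makes that content transparent from the start.
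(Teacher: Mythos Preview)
Your argument is correct and follows essentially the same approach as the paper's own proof: decompose at the second-last $\theta$ step as in \cref{thm:fullplane_combinatorial,thm:halfplane_combinatorial}, then subtract the walks that leave the quadrant, classified by whether the first exit is a south step from the $\mathbf{x}$-axis or a west step from the $\mathbf{y}$-axis. Your treatment is in fact more careful than the paper's brief sketch, particularly in making explicit that the first exit must lie strictly after the second-last $\theta$ step (so the suffix is a $D_\theta$- or $J_\theta$-walk) and that the two correction families are disjoint.
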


\begin{proof}
The idea is the same as \cref{thm:halfplane_combinatorial}, but we must now contend with two boundaries, and remove any walks which step below the $\mathbf{x}$-axis or to the left of the $\mathbf{y}$-axis. 

Walks which immediately step south or west are forbidden by $L_\theta$. Walks whose first step is valid but at some later point cross the $\mathbf{x}$-axis are counted by $D_\theta(x,y)Q^\downarrow(x)$. Walks whose first step is valid but at some later point cross the $\mathbf{y}$-axis are counted by $J_\theta(x,y)Q^\leftarrow(y)$.
\end{proof}

We will usually write the above as
\begin{equation}\label{eqn:qp_main_func_eqn}
   (1-B_\theta(x,y)) Q_\theta(x,y) = L_\theta(x,y) - D_\theta(x,y)Q^\downarrow(x) - J_\theta(x,y)Q^\leftarrow(y).
\end{equation}

Similar to \cref{lem:constructing_AF_BF,lem:constructing_CF_DF}, we also have the following.

\begin{lemma}\label{lem:constructing_LF_JF}
The generating functions $L_\theta$ and $J_\theta$ have the solutions
\begin{align}\label{eqn:constructing_LF_and_JF}
    L_\theta(x,y) &= V_\theta\cdot \begin{pmatrix}tx \\ ty \\ 0 \\ 0 \end{pmatrix} + (\hat\T_{*\theta})^\top\cdot(\mathbf I - \mathbf I_\theta\hat\T^\top)^{-1}\mathbf I_\theta\cdot \begin{pmatrix}tx \\ ty \\ 0 \\ 0 \end{pmatrix} \\
    J_\theta(x,y) &= V_\theta\cdot \begin{pmatrix}0 \\ 0 \\ t\olx \\ 0 \end{pmatrix} + (\hat\T_{* \theta})^\top\cdot(\mathbf I - \mathbf I_\theta\hat\T^\top)^{-1}\mathbf I_\theta\cdot \begin{pmatrix}0 \\ 0 \\ t\olx \\ 0 \end{pmatrix}.
\end{align}
\end{lemma}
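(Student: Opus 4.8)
The plan is to repeat, essentially verbatim, the construction behind \cref{lem:constructing_AF_BF} and \cref{lem:constructing_CF_DF}, changing only the vector of permitted initial steps. Recall that in \cref{lem:constructing_AF_BF} the generating function $A_\theta$ of full-plane walks whose only $\theta$ step is their last is assembled in three stages: first one solves the system~\eqref{eqn:fp_system_matrices} with every reference to $F_\theta$ deleted and with right-hand side $(tx,ty,t\olx,t\oly)^\top$, obtaining $(\mathbf I-\mathbf I_\theta\hat\T^\top)^{-1}\mathbf I_\theta\cdot(tx,ty,t\olx,t\oly)^\top$ for the walks that avoid $\theta$ altogether; then one appends a final $\theta$ step to those ending in a step of $p(\theta)$ by left multiplication with $(\hat\T_{*\theta})^\top$; then one adds $V_\theta\cdot(tx,ty,t\olx,t\oly)^\top$ for the lone-$\theta$-step walk. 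In \cref{lem:constructing_CF_DF} this seed vector is split into its ``non-south'' part $(tx,ty,t\olx,0)^\top$ (producing $C_\theta$) and its ``south'' part $(0,0,0,t\oly)^\top$ (producing $D_\theta$).

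First I would split the non-south seed one further step. Walks counted by $L_\theta$ are exactly those produced by the seed $(tx,ty,0,0)^\top$, since they start with an east or a north step, and walks counted by $J_\theta$ are those produced by $(0,0,t\olx,0)^\top$, since they start with a west step. Running $(tx,ty,0,0)^\top$ through the same three stages yields precisely~\eqref{eqn:constructing_LF_and_JF} for $L_\theta$, and running $(0,0,t\olx,0)^\top$ through them yields the stated formula for $J_\theta$: the middle factor $(\mathbf I-\mathbf I_\theta\hat\T^\top)^{-1}\mathbf I_\theta$ still kills any intermediate $\theta$ step in the prefix, the left factor $(\hat\T_{*\theta})^\top$ still attaches the unique final $\theta$ step, and the leading term $V_\theta\cdot(\cdots)$ still records the walk that consists of a single $\theta$ step --- which for $L_\theta$ is nonzero only when $\theta\in\{e,n\}$ and for $J_\theta$ only when $\theta=w$, matching the definitions in \cref{thm:quarterplane_combinatorial}. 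As in the earlier lemmas, well-definedness needs only that $(\mathbf I-\mathbf I_\theta\hat\T^\top)$ is invertible, which holds for every rule by the argument used after~\eqref{eqn:fp_system_matrices} (its determinant is a polynomial in $t$ with value $1$ at $t=0$, hence not identically zero); and $B_\theta$ is literally the object from \cref{lem:constructing_AF_BF}, so nothing new is required there.

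I do not expect a genuine obstacle here --- the whole point is that the quarter-plane functional equation~\eqref{eqn:qp_main_func_eqn} is the half-plane one with the seed of full-plane walks split by initial direction across \emph{both} axes. The only care needed is bookkeeping: checking that the prefix, the appended step, and the lone-$\theta$ term together count each relevant walk exactly once (no walk with two or more $\theta$ steps sneaks in, thanks to the $\mathbf I_\theta$ factors), and verifying the consistency relation $L_\theta+J_\theta+D_\theta=A_\theta$, which follows because the three seeds $(tx,ty,0,0)^\top$, $(0,0,t\olx,0)^\top$ and $(0,0,0,t\oly)^\top$ sum to $(tx,ty,t\olx,t\oly)^\top$. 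That identity also serves as a convenient sanity check on the placement of the $\mathbf I_\theta$ matrices and on the signs in~\eqref{eqn:constructing_LF_and_JF}.
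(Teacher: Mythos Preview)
Your proposal is correct and matches the paper's own approach exactly: the paper simply states that the result follows ``similar to \cref{lem:constructing_AF_BF,lem:constructing_CF_DF}'', and your write-up spells out precisely that argument, replacing the seed vector $(tx,ty,t\olx,t\oly)^\top$ by $(tx,ty,0,0)^\top$ and $(0,0,t\olx,0)^\top$. The additional sanity check $L_\theta+J_\theta+D_\theta=A_\theta$ is a nice touch that the paper does not mention.
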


\subsection{A D-finite solution: spiral walks}\label{ssec:qp_sample_solution}

The remainder of the paper is dedicated to a brief  exploration of~\eqref{eqn:qp_main_func_eqn} and the properties of $Q_\theta$. In this section we demonstrate that some of the methods which have been used to solve regular quarter plane path problems can also be employed here.

We first recall how some of this methodology works~\cite{BousquetMelou2010Walks}. For regular quarter plane lattice paths with small steps, the generating function $Q(t;x,y)$ satisfies a functional equation of the form
\begin{equation}\label{eqn:regular_qp_func_eqn}
    K(t;x,y)Q(t;x,y) = 1 - A(t;x,y)Q(t;x,0) - B(t;x,y)Q(t;0,y) + C(t;x,y)Q(t;0,0)
\end{equation}
where $K,A,B,C$ are (known) Laurent polynomials. In particular, $K(t;x,y)$ is known as the \emph{kernel} of the system; it has the form $1-tS(x,y)$, where $S$ is the \emph{step generator} of the model. For example, for walks which may step north, southeast and southwest, we have $S(x,y) = y + x\oly + \olxy$, and
\begin{equation}
    \left(1-t(y + x\oly + \olxy)\right)Q(t;x,y) = 1 - t(x\oly+\olxy)Q(t;x,0) - t\olxy Q(t;0,y) + t\olxy Q(t;0,0). 
\end{equation}

Write
\begin{equation}
    S(x,y) = A_{-1}(x)\oly + A_0(x) + A_1(x)y = B_{-1}(y)\olx + B_0(y) + B_1(y)x.
\end{equation}

Then for all non-trivial models (without specifying what that means), $S(x,y)$ is invariant under the action of a group of birational transformations, generated by the two involutions
\begin{equation}
    \Phi : (x,y) \mapsto \left(\olx\frac{B_{-1}(y)}{B_1(y)}, y\right) \qquad\text{and}\qquad \Psi : (x,y) \mapsto \left(x,\oly\frac{A_{-1}(x)}{A_1(x)}\right).
\end{equation}
This group (call it $G$) is either infinite or isomorphic to a dihedral group; for walks with small steps in the quarter plane it happens that the possible groups are $D_2$, $D_3$ and $D_4$. For example, for the model just mentioned, the group is isomorphic to $D_3$, and its actions on the pair $(x,y)$ are
\begin{equation}
    \{(x,y), (\olxy,y), (y,\olxy), (y,x), (\olxy,x), (x,\olxy)\}.
\end{equation}

Some of the models with finite groups can be solved using the \emph{orbit sum} method. This involves applying each the elements of $G$ to the functional equation~\eqref{eqn:regular_qp_func_eqn}, generating a set of equations which are then combined in such a way as to eliminate  the $Q(t;\cdot,0)$ and $Q(t;0,\cdot)$ terms. One then takes the positive part in $x$ and $y$ (that is, all the terms where the exponents of $x$ and $y$ are positive) to obtain $Q(x,y)$. (Some other models with finite groups can be solved using only half of the elements of $G$ -- this method is called the \emph{half orbit sum}. One final model -- Gessel's paths -- requires more elaborate machinery~\cite{bousquet-melou_elementary_2016}.)

We return to the spiral walks of~\eqref{eqn:spiral_T} and \cref{fig:spiralwalks_diagram}. Focusing on $\theta=\text{east}$, we have
\begin{align}
    B_e(x,y) = L_e(x,y) &= \frac{tx(t^2-tx-ty+xy+t^2xy+t^2y^2-txy^2)}{(x-t)(y-t)(1-ty)} \\
    &= tx + t^4 + t^5(\olx + y + \oly) + O(t^6), \\
    D_e(x,y) &= \frac{t^2x}{y-t} \\
    &= t^2x\oly + t^3x\oly^2 + t^4x\oly^3 + t^5x\oly^4 + O(t^6), \\
    J_e(x,y) &= \frac{t^3x}{(x-t)(y-t)} \\
    &= t^3\oly + t^4(\olxy + \oly^2) + t^5(\olxy^2 + \olx^2\oly + \oly^3) + O(t^6).
\end{align}

The similarities between~\eqref{eqn:qp_main_func_eqn} and~\eqref{eqn:regular_qp_func_eqn} are obvious:
\begin{itemize}
    \item $B_\theta$ has replaced $tS$;
    \item $L_\theta$ has replaced 1 on the right hand side;
    \item $D_\theta$ and $J_\theta$ have replaced $A$ and $B$;
    \item $Q^\downarrow(x)$ and $Q^\leftarrow(y)$ have replaced $Q(t;x,0)$ and $Q(t;0,y)$.
\end{itemize}
(There is nothing analogous to $C(t;x,y)Q(t;0,0)$ -- this term only comes up for regular quarter plane models which permit southwest steps.)

There is a key difference however -- for regular quarter plane paths, all the coefficients are Laurent polynomials in $t,x,y$ (in fact they are linear in $t$). For two-step rules, the coefficients become rational functions, which can be expanded as series in $t$ with coefficients that are Laurent polynomials in $x,y$. In particular, $B_\theta$ is much more complicated than $tS$.

Nevertheless, we can still attempt to use the orbit sum method, using the following observation.

\begin{lemma}\label{lem:Btheta_symmetries}
For any cardinally unbounded rule, the equation
\begin{equation}\label{eqn:BxY=Bxy}
    B_\theta(x,Y) = B_\theta(x,y)
\end{equation}
has two distinct solutions in $Y$, one of which is $Y=y$ and the other is a rational function of $t,x,y$. Similarly,
\begin{equation}\label{eqn:BXy=Bxy}
    B_\theta(X,y) = B_\theta(x,y)
\end{equation}
has two distinct solutions in $X$, one of which is $X=x$ and the other is a rational function of $t,x,y$.
\end{lemma}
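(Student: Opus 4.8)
The plan is to clear denominators so that each equation becomes a polynomial equation of degree at most two in the unknown, recognise the obvious root, and extract the second root by Vieta's formulas.

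First, I would recall the structural content of the proof of \cref{lem:BF_dependence_on_y}, which uses only the shape of $\hat\T^\top$ and hence holds for an arbitrary rule: $B_\theta(t;x,y)$ is a ratio of Laurent polynomials in $y$ with $y$-exponents in $\{-1,0,1\}$, and — running the identical argument on the first and third rows of $\hat\T^\top$, which carry all the $x$-dependence — also a ratio of Laurent polynomials in $x$ with $x$-exponents in $\{-1,0,1\}$. Writing $B_\theta(x,y) = (a_{-1}\oly + a_0 + a_1 y)/(b_{-1}\oly + b_0 + b_1 y)$ with the $a_i,b_i$ in $\mathbb{Q}(t,x)$, cross-multiplying in~\eqref{eqn:BxY=Bxy} and clearing the factors $\oly$ and $1/Y$ turns it into a polynomial identity $\Pi(Y)=0$, where $\Pi$ has degree at most two in $Y$ and coefficients in $\mathbb{Q}(t,x,y)$; one checks at once that $\Pi(y)=0$. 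Hence $\Pi(Y) = c_2(Y-y)(Y-Y^\ast)$ with $c_2 = [Y^2]\Pi = (a_1 b_{-1}-a_{-1}b_1) + (a_1 b_0 - a_0 b_1)y$, and comparing constant terms gives $Y^\ast = [Y^0]\Pi/(c_2 y)$, an explicit rational function of $t,x,y$ — provided $c_2\not\equiv 0$. Equation~\eqref{eqn:BXy=Bxy} is treated identically, using the $x$-exponent bounds in place of the $y$-exponent bounds.

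What remains — and this is the only real point — is to show that for a cardinally unbounded rule $\Pi$ is genuinely quadratic ($c_2\not\equiv 0$) and has distinct roots ($Y^\ast\not\equiv y$). Both are equivalent to the single claim that $B_\theta(t;x,\cdot)$, as a rational map in $y$, has degree exactly two rather than one: a degree-two self-map of $\mathbb{P}^1$ is generically two-to-one, so its fibre over $B_\theta(x,y)$ is $\{y,Y^\ast\}$ with $Y^\ast\neq y$ away from the two ramification values, whereas $c_2\equiv 0$ would force the $y$-degree down to one. To establish that the degree is two one must show $B_\theta$ depends on $y$ in a way that is not M\"obius. I would argue this combinatorially: since the rule is vertically unbounded (neither north- nor south-bound), there are, at some common length, walks counted by $B_\theta$ with strictly positive and with strictly negative net vertical displacement, and connectedness lets one arrange such walks to start with a step in $f(\theta)$ and to contain no $\theta$-step but their last; the simultaneous presence of both signs of displacement prevents the reduced numerator and denominator of $B_\theta$ from both collapsing to $y$-degree one. (Alternatively one can try to invoke \cref{lem:shape_of_rhoF}: for a vertically unbounded rule $\rho(x,y)$ is increasing then decreasing in $y$, so $B_\theta(t;x,\cdot)$ takes the value $1$ at two distinct $y$ for a range of $t$ and is therefore not injective there — though some care is then needed to rule out $B_\theta$ being merely constant in $y$ at those points.) For the $X$-statement the horizontally unbounded hypothesis plays the same role. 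This degree-two step is the main obstacle; everything else — clearing denominators, spotting the root $Y=y$, and applying Vieta — is routine bookkeeping.
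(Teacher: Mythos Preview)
Your proposal is correct and matches the paper's approach: the paper's proof is a two-line appeal to \cref{lem:BF_dependence_on_y} and \cref{cor:solns_B=1_iny}, observing that since $B_\theta(t;x,Y)=1$ already has the two distinct solutions $\upsilon^\pm(t;x)$, the map $Y\mapsto B_\theta(t;x,Y)$ has degree exactly two, so the fibre over $B_\theta(t;x,y)$ consists of $y$ and one other (necessarily rational) point. Your route (b) is precisely this argument; you simply spell out the Vieta bookkeeping that the paper leaves implicit, and you correctly note that the $X$-statement needs horizontal unboundedness (which the paper covers with ``the proof is analogous'').

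Your route (a), the direct combinatorial argument, is a genuine alternative but, as you suspect, is harder to finish cleanly: showing that the presence of walks with both signs of vertical displacement forces the \emph{reduced} rational function to have $y$-degree two (rather than a common linear factor cancelling) is fiddlier than simply quoting the existence of $\upsilon^\pm$. The paper wisely avoids this by leaning on the analytic content already established in \cref{cor:solns_B=1_iny}. One cosmetic point: your displayed formula for $c_2$ is off by a factor of $y$ (it should read $(a_1 b_{-1}-a_{-1}b_1)\oly + (a_1 b_0 - a_0 b_1)$ after clearing only the $1/Y$), but this has no bearing on the argument.
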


\begin{proof}
We focus on~\eqref{eqn:BxY=Bxy}; the proof for~\eqref{eqn:BXy=Bxy} is analogous.
By \cref{lem:BF_dependence_on_y} and \cref{cor:solns_B=1_iny}, the equation $B_\theta(x,Y) = 1$ is quadratic in $Y$ for a vertically unbounded rule. Since there are two specialisations of $y$ which sets $B_\theta(x,y)=1$ (namely, $\upsilon^\pm(t;x)$), it follows that~\eqref{eqn:BxY=Bxy} must have two distinct roots. One of them is obviously $Y=y$, so the other must be rational.
\end{proof}

Let $Y=\Phi_\theta(t;x,y)$ be the other solution to~\eqref{eqn:BxY=Bxy} and $X=\Psi_\theta(t;x,y)$ the other solution to~\eqref{eqn:BXy=Bxy}. It is easy to see that the operations $y \mapsto \Phi_\theta(t;x,y)$ and $x\mapsto \Psi_\theta(t;x,y)$ are involutions. We can thus view them as the generators of a group $G_\theta$, exactly like the group used for regular quarter plane paths. 

For example, for spiral walks we have
\begin{equation}
    \Psi_e(t;x,y) = \frac{t(t^2-tx-ty+xy+t^2xy+t^2y^2-txy^2)}{(x-t)(y-t)(1-ty)} \qquad \text{and} \qquad \Phi_e(t;x,y) = \oly.
\end{equation}
(There may be a combinatorial explanation for the fact that $\Psi_e(t;x,y) = \olx B_e(t;x,y)$; we presently do not understand it.) Since $\Psi_e(t;x,\oly) = \Psi_e(t;x,y)$, the group here is isomorphic to $D_2$:
\begin{equation}\label{eqn:spiral_e_group}
    \{(x,y), (\Psi_e,y), (\Psi_e,\oly), (x,\oly)\}.
\end{equation}
Note here that every term appearing in~\eqref{eqn:spiral_e_group} can be expanded as a power series about $t=0$. This is important -- it means that all four pairs can be validly substituted into~\eqref{eqn:qp_main_func_eqn}. 

Doing so gives four equations, with unknowns $Q_e(x,y), Q_e(\Psi_e,y), Q_e(\Psi_e,\oly), Q_e(x,\oly)$ on the left and $Q^\downarrow(x), Q^\leftarrow(y), Q^\downarrow(\Psi_e), Q^\leftarrow(\oly)$ on the right. A linear combination of these equations can be taken to eliminate all of the latter four unknowns:
\begin{multline}
    \frac{y(t-y)Q_e(x,y)}{1-ty} - \frac{t^3xy^2(t-y)Q_e(\Psi_e,y)}{(x-t)(1-ty)(t^2-tx-ty+xy+t^2xy+t^2y^2-txy^2)} \\ -\frac{t^3xyQ_e(\Psi_e,\oly)}{(x-t)(t^2-tx-ty+xy+t^2xy+t^2y^2-txy^2)} + Q_e(x,\oly)
    = RHS_e,
\end{multline}
where $RHS_e$ is
\begin{equation} -\frac{tx(1-y^2)(t^3-2t^2x+tx^2-t^2y+2txy+2t^3xy-x^2y-t^2x^2y+t^3y^2-2t^2xy^2+tx^2y^2)}{(x-t)^2(y-t)(1-ty)^2(1-B_e(x,y))}.
\end{equation}

Taking the positive part $[x^>y^>]$ of this equation, only the first term on the LHS survives, and we thus have the following.

\begin{theorem}
The generating function $Q_e(x,y)$ for spiral walks has the solution
\begin{align}
    Q_e(x,y) &= -\frac{\oly(1-ty)}{y-t}[x^>y^>]RHS_e \label{eqn:spiral_Qe_soln} \\
    &= tx + t^2x^2 + t^3x^3 + t^4x^4  + t^5(x+x^5) + t^6(2x^2+x^6+xy) + O(t^7).
\end{align}
Being the positive part of a rational function,  $Q_e(x,y)$ is D-finite.
\end{theorem}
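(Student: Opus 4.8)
The plan is to run the \emph{orbit sum} method for $\theta=e$ with the group $G_e\cong D_2$ of \eqref{eqn:spiral_e_group}, mimicking the treatment of regular small-step quarter-plane models. By \cref{lem:Btheta_symmetries} and the discussion following \eqref{eqn:spiral_e_group}, $B_e$ is invariant under $G_e$, the two involutions are $x\mapsto\Psi_e$ and $y\mapsto\Phi_e=\oly$, and each of the four orbit pairs $(x,y)$, $(\Psi_e,y)$, $(\Psi_e,\oly)$, $(x,\oly)$ admits a power-series expansion about $t=0$, so each can be legitimately substituted into the functional equation \eqref{eqn:qp_main_func_eqn}. Substituting all four and using the $G_e$-invariance of $B_e$ (so the kernel $1-B_e(x,y)$ is common to every one of them) produces four linear equations whose left sides carry $Q_e(x,y)$, $Q_e(\Psi_e,y)$, $Q_e(\Psi_e,\oly)$, $Q_e(x,\oly)$, and whose right sides carry the known series $L_e$ (via \cref{lem:constructing_LF_JF}) together with four boundary unknowns $Q^\downarrow(x)$, $Q^\downarrow(\Psi_e)$, $Q^\leftarrow(y)$, $Q^\leftarrow(\oly)$, appearing with coefficients $D_e$, $J_e$ evaluated at the corresponding pairs.

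Next I would eliminate the four boundary unknowns. Because of the incidence pattern of the orbit, their $4\times4$ coefficient matrix has the shape $\left(\begin{smallmatrix} a&0&e&0\\ 0&b&f&0\\ 0&c&0&g\\ d&0&0&h\end{smallmatrix}\right)$ with $a=D_e(x,y)$, $e=J_e(x,y)$, $b=D_e(\Psi_e,y)$, and so on, and determinant $ebdg-acfh$. From the explicit forms $D_e(x,y)=t^2x/(y-t)$ and $J_e(x,y)=t^3x/((x-t)(y-t))$ one sees that $D_e/J_e$ depends only on the first coordinate --- it equals $(x-t)/t$, resp.\ $(\Psi_e-t)/t$ --- and this forces $ebdg=acfh$, so the matrix is singular. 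Hence there is a combination of the four equations, with coefficients rational in $t,x,y$, annihilating all four boundary series; dividing the result by $1-B_e(x,y)$ gives exactly the four-term relation displayed just before the statement, whose right-hand side $RHS_e$ is a rational function of $t,x,y$.

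Then I would apply the positive-part operator $[x^>y^>]$. A quarter-plane walk from the origin ending with an east step reaches a vertex $(a,b)$ from a vertex $(a-1,b)$ lying in the quarter plane, so $Q_e(x,y)$ is a power series in $t$ whose coefficients are polynomials in $x,y$ with $x$-exponents $\ge1$ and $y$-exponents $\ge0$; multiplying by $y(t-y)/(1-ty)$, whose $t$-expansion has $y$-exponents $\ge1$, raises all $y$-exponents to $\ge1$, so the principal term is fixed by $[x^>y^>]$. For the other three terms: every one in which $Q_e$ is evaluated with second argument $\oly$ has only nonpositive powers of $y$ (the rational prefactors contribute $y$-exponents $\le0$) and so dies under $[y^>]$; and in the term $Q_e(\Psi_e,y)$ the substitution $x\mapsto\Psi_e$ produces only nonpositive powers of $x$ --- here one uses $\Psi_e=\olx B_e$ together with the fact that a walk counted by $B_e$ has $x$-displacement at most $1$ (its unique east step being its last), so that $\Psi_e$ expanded in $t$ has $x$-exponents $\le0$ --- while the rational prefactor of $Q_e(\Psi_e,y)$ only lowers the $x$-exponent further, so $[x^>]$ kills it. Thus $[x^>y^>]$ of the left side collapses to $\frac{y(t-y)}{1-ty}Q_e(x,y)$, and solving for $Q_e$ yields \eqref{eqn:spiral_Qe_soln}. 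Expanding that formula in $t$ (equivalently, directly enumerating the short quarter-plane spiral walks ending east) gives the stated $tx+t^2x^2+\dots+O(t^7)$.

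Finally, for D-finiteness: $RHS_e$ is rational, so its positive part $[x^>y^>]RHS_e$ is a D-finite power series (the positive part of a rational, more generally of a D-finite, series is D-finite --- a standard closure property), and multiplying by the rational function $-\oly(1-ty)/(y-t)$ preserves D-finiteness, so $Q_e(x,y)$ is D-finite. I expect the main obstacle to be the conjunction of the singularity of the boundary-coefficient matrix (i.e.\ the $D_2$ orbit sum actually telescoping) and the $x$-exponent claim for $Q_e(\Psi_e,y)$ in the positive-part step: both rest on the special structure of the spiral model --- the explicit shapes of $D_e,J_e$ and the identity $\Psi_e=\olx B_e$ --- and it is precisely here that the method can fail, as it does for many other two-step rules.
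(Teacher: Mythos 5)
Your proof is correct and follows the paper's orbit-sum route: substitute all four $D_2$-orbit pairs, eliminate the boundary series, then extract the positive part in $x,y$. You add justification the paper leaves to direct computation: the elimination works because the boundary-coefficient matrix $\left(\begin{smallmatrix} a&0&e&0\\ 0&b&f&0\\ 0&c&0&g\\ d&0&0&h\end{smallmatrix}\right)$ is singular --- its determinant $ebdg-acfh$ vanishes since $D_e/J_e=(x-t)/t$ depends only on the first argument --- and the positive part isolates $\frac{y(t-y)}{1-ty}Q_e(x,y)$ because of support bounds derived from $\Psi_e=\olx B_e$ (an identity the paper records but does not explain) plus the observation that a $B_e$-walk's $x$-displacement is at most one. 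One small inaccuracy: you claim that both $\oly$-substituted terms die under $[y^>]$ because ``the rational prefactors contribute $y$-exponents $\le 0$.'' That is true for the $Q_e(x,\oly)$ term (prefactor $1$), but the prefactor of $Q_e(\Psi_e,\oly)$, namely $-t^3xy/\bigl((x-t)(t^2-tx-ty+xy+t^2xy+t^2y^2-txy^2)\bigr)$, expands in $t$ with $y$-exponents that grow unboundedly, and $\Psi_e$ itself also carries positive $y$-powers. That term is instead killed by $[x^>]$, by the same mechanism you use for $Q_e(\Psi_e,y)$: both the prefactor and $\Psi_e$ have only nonpositive powers of $x$. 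With that small repair, your argument is complete and more self-contained than the paper's.
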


This process can be repeated in almost exactly the same way for $Q_n(x,y)$; the only difference is that when extracting the $[x^>y^>]$ of the orbit sum, the unknowns $Q_n(x,y)$ and $Q_n(0,y)$ both appear on the left. But $Q_n(0,y)$ is just $ty/(1-ty)$, so this is not a problem.

However, this process does not work for $Q_w$ or $Q_s$. This is because $\Psi_w$ and $\Phi_s$ do not have power series expansions about $t=0$, and thus cannot be validly substituted into~\eqref{eqn:qp_main_func_eqn}. (However the group is still isomorphic to $D_2$ for each.)

Fortunately, in this case there is a straightforward way to use the solution to $Q_e(x,y)$ to solve all four generating functions. Note that for spiral walks, $Q^\leftarrow(0) = 0$ (the only way a spiral walk can end at the origin is with a south step, which cannot be followed by a west step). Taking $y=0$ in~\eqref{eqn:qp_main_func_eqn} with $\theta=e$, we obtain a relation between $Q_e(x,0)$ and $Q^\downarrow(x)$, giving the solution to the latter. Substituting back into~\eqref{eqn:qp_main_func_eqn} gives us $Q^\leftarrow(y)$, and then all the other $Q_\theta(x,y)$ follow. They are all D-finite.

The expression~\eqref{eqn:spiral_Qe_soln} is sufficiently complicated that we have been unable to rigorously derive asymptotics. However, elementary series analysis permits us to be confident in the following conjecture.

\begin{conjecture}
For spiral walks in the quarter plane,
\begin{equation}
    P^\qpcorner_m = \frac{8}{\pi}\times\frac{1}{m}\times 2^m\times\left(1 - \frac{3}{2m} + \mathrm{O}\left(\frac{1}{m^2}\right)\right).
\end{equation}
Let $P^\qpcorner_{m,\mathbf{x}}$ be the number of quarter plane spiral walks of length $m$ which end on the $\mathbf{x}$-axis, and similarly define $P^\qpcorner_{m,\mathbf{y}}$ and $P^\qpcorner_{m,\mathbf{o}}$ for walks ending on the $\mathbf{y}$-axis and at the origin. Then
\begin{align}
    P^\qpcorner_{m,\mathbf{x}} &= \frac{16}{\pi} \times \frac{1}{m^2}\times 2^m \times\left(1 - \frac{4-(-1)^m}{2m} + \mathrm{O}\left(\frac{1}{m^2}\right)\right) \\
    P^\qpcorner_{m,\mathbf{y}} &= \frac{16}{\pi} \times \frac{1}{m^2}\times 2^m \times\left(1 - \frac{8-(-1)^m}{2m} + \mathrm{O}\left(\frac{1}{m^2}\right)\right) \\
    P^\qpcorner_{m,\mathbf{o}} &= \begin{cases} \displaystyle \frac{64}{\pi} \times \frac{1}{m^3}\times 2^m \times\left(1 - \frac{15}{2m} + \mathrm{O}\left(\frac{1}{m^2}\right)\right), & m \text{ even} \\
    \displaystyle 0, & m \text{ odd}.\end{cases}
\end{align}
\end{conjecture}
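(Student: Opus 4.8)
The plan is to reduce the conjecture to the singularity analysis of four explicit power series in $t$, and then to carry that analysis out. Write $Q_p := Q_e + Q_n + Q_w + Q_s$. Counting the walks of length $m$ by the position of their endpoint, the definitions of the partition functions give at once
\begin{gather*}
  P^\qpcorner_m = [t^m]\,Q_p(t;1,1),\qquad P^\qpcorner_{m,\mathbf{x}} = [t^m]\,Q_p(t;1,0),\\
  P^\qpcorner_{m,\mathbf{y}} = [t^m]\,Q_p(t;0,1),\qquad P^\qpcorner_{m,\mathbf{o}} = [t^m]\,Q_p(t;0,0).
\end{gather*}
So it suffices to (i) obtain these four univariate functions in a form amenable to analysis, (ii) locate their dominant singularities, (iii) compute the singular expansion of each to one order past the leading term, and (iv) apply the transfer theorems of singularity analysis~\cite{Flajolet2009Analytic}. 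The two parity statements need no work: a length-$m$ walk ending at $(a,0)$ has $a\equiv m\pmod 2$, and a walk returning to the origin has even length, which already gives $P^\qpcorner_{m,\mathbf{o}}=0$ for odd $m$.

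For step (i), start from the orbit-sum solution~\eqref{eqn:spiral_Qe_soln} for $Q_e$ and the analogous one for $Q_n$. As explained after~\eqref{eqn:spiral_Qe_soln}, setting $y=0$ in~\eqref{eqn:qp_main_func_eqn} with $\theta=e$ (and using $Q^\leftarrow(0)=0$ for spiral walks) determines $Q^\downarrow(x)$; resubstituting into~\eqref{eqn:qp_main_func_eqn} determines $Q^\leftarrow(y)$ and then $Q_w$ and $Q_s$. Each $Q_\theta(t;x,y)$ is thereby written as an explicit rational function plus a $[x^{>}y^{>}]$ of an explicit rational function. Now specialise $x,y$ to values in $\{0,1\}$: a positive part evaluated at $1$ is a Cauchy-kernel contour integral, so $[x^{>}y^{>}]R(t;x,y)\big|_{x=y=1}$ becomes a double contour integral of the rational function $R$ over suitable circles $|x|=r_1$, $|y|=r_2$. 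Evaluating by residues, each of $Q_p(t;1,1)$, $Q_p(t;1,0)$, $Q_p(t;0,1)$ and $Q_p(t;0,0)$ emerges as an explicit combination of algebraic functions of $t$; in particular each is D-finite, with an explicitly describable singular set. (Equivalently, one may guess and certify linear ODEs for these four series from the long series already available, which reduces step (i) to a finite verification.)

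For steps (ii) and (iii): by the $180^\circ$ rotational symmetry of the spiral rule one has $\mu(x,y)=\mu(1/x,1/y)$, so the drift vanishes at $(1,1)$ and the exponential growth rate of the quarter-plane counts equals that of the full and half plane, namely $2$. (Upper bound: $P^\qpcorner_m\le p_m=2^{m+1}$. Lower bound: prepend a deterministic detour $e^k n^k$ with $k=\lfloor\sqrt m\rfloor$ and use that a spiral walk started at distance $\Theta(\sqrt\ell)$ from both axes stays in the quadrant for $\ell$ further steps with probability bounded below; this shows the rate is not reduced below $2$. The boundary series $Q^\downarrow, Q^\leftarrow$ do not pull the dominant singularity closer to the origin, by the reasoning used in the proofs of \cref{lem:domsing_B=1} and \cref{lem:hp_irrat_posdrift}.) Hence each of the four functions is singular at $t=\tfrac12$, and those not supported on even $m$ also carry a mirror singularity at $t=-\tfrac12$. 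The conjecture pins down the required local behaviour: since $[t^m]\log\frac1{1-2t}=2^m/m$ and $[t^m](1-2t)^j\log\frac1{1-2t}\sim(-1)^j j!\,2^m/m^{j+1}$, one must show that near $t=\tfrac12$
\begin{gather*}
  Q_p(t;1,1)\sim -\tfrac{8}{\pi}\log(1-2t),\qquad Q_p(t;1,0),\ Q_p(t;0,1)\sim \tfrac{16}{\pi}(1-2t)\log(1-2t),\\
  Q_p(t;0,0)\sim -\tfrac{16}{\pi}(1-2t)^2\log(1-2t),
\end{gather*}
together with the next term of each expansion (which supplies the $1-\tfrac{c}{2m}(\cdots)$ correction) and the corresponding expansions at $t=-\tfrac12$ (which supply the $(-1)^m$ in the corrections for walks ending on an axis). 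The rate $2$ and the exponents $1,2,2,3$ are exactly what the general (Denisov--Wachtel) theory of zero-drift walks in a quadrant predicts, and they also drop out of the local analysis of the functions from step (i); this is the routine part.

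The main obstacle is the \emph{connection constants}: proving that the singular coefficients are exactly $8/\pi$, $16/\pi$, etc.\ (and that the rational sub-leading coefficients are $3/2$, $4-(-1)^m$, $8-(-1)^m$ and $15/2$), rather than merely confirming the $2^m m^{-1}$, $2^m m^{-2}$, $2^m m^{-3}$ shapes. The factor of $\pi$ is the tell-tale sign of an elliptic closed form, as for many D-finite regular quarter-plane models; so the most promising concrete route is to simplify the contour-integral expressions of step (i) into closed forms involving complete elliptic integrals (or ${}_2F_1$ evaluations) and read off their expansions at $t=\tfrac12$. Failing that, one must solve the connection problem for the ODEs of step (i) explicitly -- relating a local solution basis at $t=\tfrac12$ to the power series at the origin -- which is the genuinely hard, non-automatable analytic step. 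Everything else is routine given the machinery developed in \cref{sec:quarter_plane}.
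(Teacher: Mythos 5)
This statement is a \emph{conjecture} in the paper, not a theorem: the paper explicitly says ``The expression~\eqref{eqn:spiral_Qe_soln} is sufficiently complicated that we have been unable to rigorously derive asymptotics. However, elementary series analysis permits us to be confident in the following conjecture.'' So there is no proof in the paper to compare against; the author's evidence is numerical fitting of the series coefficients.

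Your proposal is a \emph{plan} for a rigorous proof, and you are commendably honest that it is not complete. You correctly reduce the problem to singularity analysis of $Q_p(t;1,1)$, $Q_p(t;1,0)$, $Q_p(t;0,1)$, $Q_p(t;0,0)$; you correctly identify the exponential growth rate $2$ (the $180^\circ$-rotation symmetry of the spiral rule does make the drift vanish at $(1,1)$, and the Perron eigenvalue of $\T$ is $2$); and your target singular expansions, including the factor-of-two enhancement at even $m$ from the mirror singularity at $t=-\tfrac12$, are consistent with the conjectured coefficients. The two parity assertions do follow immediately from the parity of the endpoint coordinates. All of this is sound scaffolding.

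However, the proposal does not actually establish the conjecture. The genuine gap is exactly where you say it is: converting ``the positive part of an explicit but complicated rational function'' into verified singular expansions at $t=\pm\tfrac12$ with the stated constants $8/\pi$, $16/\pi$, $64/\pi$ and the sub-leading rationals $3/2$, $4-(-1)^m$, $8-(-1)^m$, $15/2$. You name two routes (closed elliptic/${}_2F_1$ forms, or solving the ODE connection problem) but do not carry either out, and neither is routine for a positive part in two catalytic variables with the rational (non-Laurent-polynomial) coefficients of~\eqref{eqn:qp_main_func_eqn}. The lower-bound argument for the growth rate (``prepend a detour $e^k n^k$ \dots stays in the quadrant with probability bounded below'') is also only a heuristic as written; it needs an actual estimate, e.g.\ via the reflection arguments already used for the half plane, or the Denisov--Wachtel machinery you allude to, applied to the induced Markov chain on directions. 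In short: your approach is the right one and matches the spirit of what the paper suggests would be needed, but as submitted it proves neither the exponents nor, especially, the connection constants; it remains a conjecture under your proposal too, so the paper and your attempt are in the same position.
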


\subsection{How the above can fail}

We have found a number of rules for which the above orbit sum method works. For the majority of cases, however, it does not work. Here we briefly run through some examples.

\subsubsection{Finite group, but the orbit sum does not cancel all terms}\label{ssec:extra_spiral}

We take spiral walks but now add the reverse of each step in as well (equivalently, the two-step rule which forbids a walk from turning to the right):
\begin{figure}[H]
    \centering
    \begin{tikzpicture}[scale=0.8]
    \node at (-6,0) {$\displaystyle T = \begin{pmatrix}
  1 & 1 & 1 & 0 \\
  0 & 1 & 1 & 1 \\
  1 & 0 & 1 & 1 \\
  1 & 1 & 0 & 1
  \end{pmatrix}$};

  \node [draw, circle, fill, inner sep=2pt] at (1,0) {};
\node [draw, circle, fill, inner sep=2pt] at (0,1) {};
\node [draw, circle, fill, inner sep=2pt] at (-1,0) {};
\node [draw, circle, fill, inner sep=2pt] at (0,-1) {};
\draw [line width=1.5pt, -Latex] (1,0) -- (2,0);
\draw [line width=1.5pt, -Latex] (1,0) -- (1,1);
\draw [line width=1.5pt, -Latex] (1,0) -- (0,0);
\draw [line width=1.5pt, -Latex] (0,1) -- (0,2);
\draw [line width=1.5pt, -Latex] (0,1) -- (-1,1);
\draw [line width=1.5pt, -Latex] (0,1) -- (0,0);
\draw [line width=1.5pt, -Latex] (-1,0) -- (-2,0);
\draw [line width=1.5pt, -Latex] (-1,0) -- (-1,-1);
\draw [line width=1.5pt, -Latex] (-1,0) -- (0,0);
\draw [line width=1.5pt, -Latex] (0,-1) -- (0,-2);
\draw [line width=1.5pt, -Latex] (0,-1) -- (1,-1);
\draw [line width=1.5pt, -Latex] (0,-1) -- (0,0);
    \end{tikzpicture}
\end{figure}
The group structure is the same as for spiral walks -- it is $D_2$ for all four directions $\theta$, and all terms can be validly substituted for $\theta=e$ and $n$ but not $w$ or $s$. However, the four $Q^\downarrow$ and $Q^\leftarrow$ terms cannot all be eliminated.

Trying to guess a differential or polynomial equation satisfied by $Q_p(1,1)$ using 2000 terms of the series turned up nothing, and so we are reasonably confident in saying that the generating functions are not D-finite.

\subsubsection{Finite group, but the orbit sum vanishes}

This time take the following rule:
\begin{figure}[H]
    \centering
    \begin{tikzpicture}[scale=0.8]
    \node at (-6,-0.5) {$\displaystyle T = \begin{pmatrix}
  0 & 1 & 1 & 0 \\
  1 & 0 & 0 & 1 \\
  1 & 1 & 1 & 1 \\
  1 & 1 & 1 & 1
  \end{pmatrix}$};

  \node [draw, circle, fill, inner sep=2pt] at (1,0) {};
\node [draw, circle, fill, inner sep=2pt] at (0,1) {};
\node [draw, circle, fill, inner sep=2pt] at (-1,0) {};
\node [draw, circle, fill, inner sep=2pt] at (0,-1) {};
\draw [line width=1.5pt, -Latex] (1,0) -- (1,1);
\draw [line width=1.5pt, -Latex] (1,0) -- (0,0);
\draw [line width=1.5pt, -Latex] (0,1) -- (1,1);
\draw [line width=1.5pt, -Latex] (0,1) -- (0,0);
\draw [line width=1.5pt, -Latex] (-1,0) -- (-2,0);
\draw [line width=1.5pt, -Latex] (-1,0) -- (-1,-1);
\draw [line width=1.5pt, -Latex] (-1,0) -- (0,0);
\draw [line width=1.5pt, -Latex] (-1,0) -- (-1,1);
\draw [line width=1.5pt, -Latex] (0,-1) -- (0,-2);
\draw [line width=1.5pt, -Latex] (0,-1) -- (1,-1);
\draw [line width=1.5pt, -Latex] (0,-1) -- (0,0);
\draw [line width=1.5pt, -Latex] (0,-1) -- (-1,-1);
    \end{tikzpicture}
\end{figure}

For $\theta=e$, the group is isomorphic to $D_3$:
\begin{equation}\label{eqn:some6group}
    \{(x,y), (\psi,y), (y,\psi), (y,x), (\psi,x), (x,\psi)\}
\end{equation}
where
\begin{equation}
    \psi = \frac{t(1+xy)}{xy-tx-ty-t^2xy}.
\end{equation}
The group is the same for $\theta=n$. For the other two directions the group is still isomorphic to $D_3$ but not quite so nice.
(The nice symmetry of the group reflects the $x$-$y$ symmetry of the rule, but it should be noted that many other rules with $x$-$y$ symmetry do not have such simple groups.)

We can expand $\psi$ as a series in $t$ so all six elements of the group may be used, but this time the orbit sum completely vanishes. This phenomenon is characteristic of the regular quarter plane models which are algebraic -- Kreweras, reverse Kreweras, double Kreweras and Gessel paths~\cite{BousquetMelou2010Walks}. The first three of these also have $x$-$y$ symmetry, so that $Q(x,y)$ = $Q(y,x)$. For $\theta=e$ we do not quite have this, but we do have $Q^\downarrow(x) = Q^\leftarrow(x)$. The groups for those three models are also isomorphic to $D_3$, and look very much like~\eqref{eqn:some6group}, with $\olxy$ instead of $\psi$.

Kreweras, reverse Kreweras and double Kreweras paths were solved not by using the full orbit sum but instead by using a \emph{half orbit sum}. For this method, only three of the group elements are substituted, and the resulting equations are combined to eliminate all but one unknown on the right. The coefficient $[y^0]$ is then taken, eliminating all but one term on the left, and finally after some delicate factorisation the $[x^>]$ and $[x^<]$ parts are separated.

In addition to the above similarities with algebraic regular quarter plane models, we are able to guess an algebraic equation for $Q_e(1,1)$ using only 50 terms:
\begin{multline}
    -t(2+4t-19t^2-22t^3-9t^4) + 2(1+t)(1-16t^2+16t^3+18t^4)Q_e \\ + t(5-16t-24t^2+64t^3+54t^4)Q_e^2 + 4t^2(1+t)(1-3t)^2Q_e^3 + t^3(1-3t)^2Q_e^4 = 0.
\end{multline}

The idea of using a half orbit sum for this model is thus very appealing, but unfortunately we have been unable to make it work. The equation obtained reads
\begin{multline}
    (1+xy)Q_e(x,y) - \frac{x(t-x)(1+ty)Q_e(\psi,x)}{xy-tx-ty-t^2xy} + \frac{x(t-y)(1+ty)Q_e(\psi,y)}{xy-tx-ty-t^2xy} \\ 
    = R_e - \frac{2t^2x^2(1+ty)(1+xy)Q^\downarrow(x)}{xy-tx-ty-2t^2xy-t^2x^2y^2}
\end{multline}
where $R_e$ is a large-ish rational term. It is unclear how to extract the $[y^0]$ term, or $[x^>]$ or $[x^<]$ parts. (We do however note that
 $\psi$ has no positive powers of $x$ or $y$ in its series expansion, similar to $\olxy$, so perhaps an elaborate coefficient extraction may be possible.)

\subsubsection{Infinite group}

Unsurprisingly, for many (indeed, most) two-step rules, the symmetry group of $B_\theta$ is infinite. More surprisingly, some of these appear to have D-finite generating functions, such as this one:
\begin{figure}[H]
    \centering
    % this is model 54
    \begin{tikzpicture}[scale=0.8]
    \node at (-6,0) {$\displaystyle T = \begin{pmatrix}
  0 & 0 & 0 & 1 \\
  0 & 1 & 1 & 1 \\
  0 & 1 & 0 & 0 \\
  1 & 1 & 0 & 1
  \end{pmatrix}$};

  \node [draw, circle, fill, inner sep=2pt] at (1,0) {};
\node [draw, circle, fill, inner sep=2pt] at (0,1) {};
\node [draw, circle, fill, inner sep=2pt] at (-1,0) {};
\node [draw, circle, fill, inner sep=2pt] at (0,-1) {};
\draw [line width=1.5pt, -Latex] (1,0) -- (1,-1);
\draw [line width=1.5pt, -Latex] (0,1) -- (0,2);
\draw [line width=1.5pt, -Latex] (0,1) -- (-1,1);
\draw [line width=1.5pt, -Latex] (0,1) -- (0,0);
\draw [line width=1.5pt, -Latex] (-1,0) -- (-1,1);
\draw [line width=1.5pt, -Latex] (0,-1) -- (1,-1);
\draw [line width=1.5pt, -Latex] (0,-1) -- (0,0);
\draw [line width=1.5pt, -Latex] (0,-1) -- (0,-2);
    \end{tikzpicture}
\end{figure}

Others even have algebraic generating functions:
\begin{figure}[H]
    \centering
    % this is model 893
    \begin{tikzpicture}[scale=0.8]
    \node at (-6,-0.5) {$\displaystyle T = \begin{pmatrix}
  0 & 0 & 1 & 1 \\
  1 & 0 & 1 & 0 \\
  1 & 1 & 1 & 0 \\
  1 & 1 & 1 & 1
  \end{pmatrix}$};

  \node [draw, circle, fill, inner sep=2pt] at (1,0) {};
\node [draw, circle, fill, inner sep=2pt] at (0,1) {};
\node [draw, circle, fill, inner sep=2pt] at (-1,0) {};
\node [draw, circle, fill, inner sep=2pt] at (0,-1) {};
\draw [line width=1.5pt, -Latex] (1,0) -- (0,0);
\draw [line width=1.5pt, -Latex] (1,0) -- (1,-1);
\draw [line width=1.5pt, -Latex] (0,1) -- (1,1);
\draw [line width=1.5pt, -Latex] (0,1) -- (-1,1);
\draw [line width=1.5pt, -Latex] (-1,0) -- (0,0);
\draw [line width=1.5pt, -Latex] (-1,0) -- (-1,1);
\draw [line width=1.5pt, -Latex] (-1,0) -- (-2,0);
\draw [line width=1.5pt, -Latex] (0,-1) -- (1,-1);
\draw [line width=1.5pt, -Latex] (0,-1) -- (0,0);
\draw [line width=1.5pt, -Latex] (0,-1) -- (-1,-1);
\draw [line width=1.5pt, -Latex] (0,-1) -- (0,-2);
    \end{tikzpicture}
\end{figure}

\subsection{Computational results}

In this final section we run through some computational results regarding enumeration in the quarter plane. We have completed two sets of computations for the 6909 different `non-trivial' quarter plane models:
\begin{enumerate}
    \item Using \textsc{Sage}~\cite{sagemath}, we compute $B_\theta$ and then the group $G_\theta$, for each of the four directions $\theta$. Of course the group may be infinite; but in our experience when the group is finite the calculation terminates very quickly, so it suffices to impose a time limit of a few seconds to separate the finite and infinite groups.
    \item Using the \textsc{Ore Algebra} package  \cite{orepolys} for \textsc{Sage}, we use 500 terms of the series for $Q_p(t;1,1)$ to `guess' a differential equation satisfied by the generating function. If this is successful we then try to guess an algebraic equation.\footnote{As mentioned in \cref{ssec:extra_spiral}, for a few models we have used many more terms.}
\end{enumerate}

While we are very confident that the group calculations are correct, the D-finite/algebraic results are almost certainly incomplete. That is, for some rules 500 terms may not have been enough in order to guess a differential or algebraic equation. (For 6909 series we had to draw the line somewhere.) 

We summarise the results in the following conjectures.

\begin{conjecture}\label{conj:all_groups_same}
For every two-step rule in $\mathcal{Q}$, we have
\begin{equation}
    |G_e| = |G_n| = |G_w| = |G_s|.
\end{equation}
\end{conjecture}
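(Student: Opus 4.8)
The plan is to show that, although the involutions generating $G_\theta$ genuinely depend on $\theta$ as birational transformations of the plane, their restrictions to a single, $\theta$-independent curve coincide, and that the order of $G_\theta$ is already pinned down by that restriction.

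First I would record the key algebraic identity
\[
  1 - B_\theta(t;x,y) \;=\; \frac{\det(\mathbf I - \hat\T^\top)}{\det(\mathbf I - \mathbf I_\theta\hat\T^\top)},
\]
valid for every two-step rule. This follows from a Schur-complement expansion of $\det(\mathbf I - \hat\T^\top)$ along the $\theta$-th coordinate: the $\theta$-th principal minor of $\mathbf I - \hat\T^\top$ is exactly $\det(\mathbf I - \mathbf I_\theta\hat\T^\top)$, and the accompanying Schur complement, matched against the formula for $B_\theta$ in \cref{lem:constructing_AF_BF}, is precisely $1 - B_\theta$. The numerator $D(t;x,y):=\det(\mathbf I - \hat\T^\top)$ does not depend on $\theta$, and — since each of $tx$, $ty$, $t\olx$, $t\oly$ occurs in a single row of $\hat\T^\top$ — it is affine-linear in each of those quantities; in particular it is a Laurent polynomial of degree at most one in $x$ and in $y$. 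Hence the affine curve $\mathcal C_t := \{(x,y):D(t;x,y)=0\}$ has bidegree $(2,2)$, is the same for all four choices of $\theta$, and (for generic $t$ and for the non-degenerate rules in $\mathcal Q$) is a smooth curve of genus one — exactly the geometric situation of the kernel curve of a regular small-step quarter-plane model \cite{BousquetMelou2010Walks}.

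Next I would identify the action of $G_\theta$ on $\mathcal C_t$. On $\mathcal C_t$ one has $D=0$, hence (generically, away from the $\theta$-dependent locus $\det(\mathbf I - \mathbf I_\theta\hat\T^\top)=0$) $B_\theta\equiv 1$; so for a point $(x,y)\in\mathcal C_t$ the defining relation $B_\theta(x,Y)=B_\theta(x,y)$ for $\Phi_\theta$ becomes $B_\theta(x,Y)=1$, which is equivalent to $D(x,Y)=0$. Thus the restriction of $\Phi_\theta$ to $\mathcal C_t$ sends $(x,y)$ to the second point of $\mathcal C_t$ on the vertical line through it, and likewise the restriction of $\Psi_\theta$ swaps the two points of $\mathcal C_t$ on a horizontal line. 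These two involutions are intrinsic to $\mathcal C_t$ together with its two projections to $\mathbb P^1$, and in particular do not depend on $\theta$. Consequently the image of $G_\theta$ under restriction to $\mathcal C_t$ is one and the same group $\bar G\le\operatorname{Aut}(\mathcal C_t)$ for all four directions.

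The final — and hardest — step is to upgrade this to $|G_\theta|=|\bar G|$, i.e.\ to prove that the restriction homomorphism $G_\theta\to\operatorname{Aut}(\mathcal C_t)$ is injective. This is exactly the faithfulness of the group action on the kernel curve, which is known in the regular small-step setting: on the genus-one curve $\mathcal C_t$ the two involutions have the form $P\mapsto a-P$ and $P\mapsto b-P$ in the group law, so their composition is a translation and the whole group is thereby determined (dihedral of order $2n$ when the translation has order $n$, infinite otherwise). I would check that the arguments giving this faithfulness transfer to the present $B_\theta$-kernel, the only new feature being that $\mathcal C_t$ carries the extra parameter $t$, which is harmless at generic $t$; once it is granted, $|G_\theta|=|\bar G|$ is manifestly independent of $\theta$ (and, in particular, all four groups are simultaneously finite or infinite), which is the conjecture. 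I expect the genuine obstacle to lie in the degenerate strata — values of $t$, or rules, for which $\mathcal C_t$ drops to genus zero, or for which one of the coefficient polynomials of $D$ (viewed as a Laurent polynomial of degree one in $y$, resp.\ in $x$) vanishes identically so that one of the involutions changes shape — together with verifying that the exclusions built into $\mathcal Q$ (cardinally and diagonally unbounded, not glued) are enough to keep those cases under control.
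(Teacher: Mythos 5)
Your attempt addresses a statement the paper does \emph{not} prove: \cref{conj:all_groups_same} is stated as a conjecture, supported only by the computations summarised in the final subsection of \cref{sec:quarter_plane}, and the conclusion explicitly lists establishing it (together with \cref{conj:groups}) as an open problem. There is therefore no paper proof to compare against; what you are proposing would, if completed, be new.

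Judged on its own terms, the first two steps of your plan are correct and contain a genuinely useful observation not made explicit in the paper. The Schur-complement identity
\[
  1-B_\theta \;=\; \frac{\det(\mathbf I-\hat\T^\top)}{\det(\mathbf I-\mathbf I_\theta\hat\T^\top)}
\]
does follow from expanding $\det(\mathbf I-\hat\T^\top)$ with the $\theta$-th row and column set apart, together with the fact (used implicitly in the proof of \cref{lem:BF_dependence_on_y}) that $\det(\mathbf I-\mathbf I_\theta\hat\T^\top)$ is the $(\theta,\theta)$ principal minor of $\mathbf I-\hat\T^\top$. The consequences you draw — that the locus $D:=\det(\mathbf I-\hat\T^\top)=0$ is $\theta$-independent, has bidegree $(2,2)$, and carries $\theta$-independent vertical- and horizontal-swap involutions obtained by restricting $\Phi_\theta$ and $\Psi_\theta$ — all check out. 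So the four groups $G_\theta$ do map onto one common group $\bar G\le\operatorname{Aut}(\mathcal C_t)$.

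The gap is the faithfulness of that map, and it is not a technicality that ``transfers'' from the small-step case. There the generators $\Phi,\Psi$ are independent of $t$, so a word that acts as the identity on $\{1-tS(x,y)=0\}$ for \emph{every} $t$ acts as the identity on a dense union of curves and hence on the plane; that is what makes the restriction to the kernel curve injective. Here $\Phi_\theta$ and $\Psi_\theta$ genuinely involve $t$ (see the explicit $\Psi_e$ for spiral walks), so over the base field $\mathbb C(t)$ you have a single curve $\mathcal C_t$ and no such sweeping argument. You do have that $G_\theta$ preserves $B_\theta$ and hence acts fibrewise on the whole pencil $\{B_\theta=c\}$, of which $\mathcal C_t$ is the fibre $c=1$; but a word acting trivially on that one fibre could a priori be nontrivial on nearby ones, since the locus of $c$ on which a nontrivial word restricts to the identity is merely a proper closed subset of the base, and nothing you say rules out $c=1$ lying inside it. Until that loophole is closed — by a rigidity argument for the pencil, or by analysing $\Psi_\theta\Phi_\theta$ as a translation on the fibres and showing its order is constant in $c$ — you have proven that $\bar G$ is $\theta$-independent but not that $|G_\theta|=|\bar G|$, which is the content of the conjecture. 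The degenerate strata you flag (genus drops, vanishing leading coefficients, and the various boundedness pathologies that the definition of $\mathcal Q$ is designed to exclude) are a second, separate source of unfinished work.
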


For the next conjecture, let $\mathcal{N}_\ell$ denote the set of rules in $\mathcal{Q}$ whose symmetry group has order $\ell$. 

\begin{conjecture}\label{conj:groups}
We have
\begin{equation}
\begin{aligned}
    |\mathcal{N}_4| &= 1084 & |\mathcal{N}_6| &= 443 & |\mathcal{N}_8| &= 146 \\  
    |\mathcal{N}_{10}| &= 66 & |\mathcal{N}_{12}| &= 6 & |\mathcal{N}_\infty| &= 5164
\end{aligned}
\end{equation}
\end{conjecture}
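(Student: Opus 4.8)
\emph{Proof proposal.} The statement is a finite classification of the $6909$ models retained in the computational results above, so the plan is computational and follows the template developed by Bousquet-M\'elou and Mishna for regular small-step quarter-plane models. Fix a direction $\theta$ (say $\theta=e$). For each model, first compute $B_\theta(t;x,y)$ from \cref{lem:constructing_AF_BF}; this is an explicit rational function of $t,x,y$ with rational coefficients. By \cref{lem:Btheta_symmetries}, clearing denominators in $B_\theta(x,Y)=B_\theta(x,y)$ and dividing out the factor $Y-y$ yields the second root $Y=\Phi_\theta(t;x,y)$ as a rational function, and likewise one obtains $X=\Psi_\theta(t;x,y)$. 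As noted after \cref{lem:Btheta_symmetries}, the maps $y\mapsto\Phi_\theta$ and $x\mapsto\Psi_\theta$ are involutions, so $G_\theta=\langle\Phi_\theta,\Psi_\theta\rangle$ is dihedral and $|G_\theta|=2k$, where $k$ is the order of $\Delta_\theta:=\Psi_\theta\circ\Phi_\theta$ (with $k=\infty$ admissible). Computing $B_n,B_w,B_s$ as well and checking that all four orders agree simultaneously settles \cref{conj:all_groups_same}, which is what makes $\mathcal{N}_\ell$ well defined.

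Second, iterate $\Delta_\theta$, reducing each rational function to lowest terms at every step (equivalently, compute the orbit of the generic point $(x,y)$). When the group is finite the orbit closes after very few steps and one reads off $k$, hence $|G_\theta|$. Imposing a generous cutoff — stop once $\Delta_\theta^{k}\neq\mathrm{id}$ for, say, twenty iterations — safely isolates the dihedral cases of interest, since $D_6$ already has order $12$. Tallying over all models whose orbit closes should produce $|\mathcal{N}_4|=1084$, $|\mathcal{N}_6|=443$, $|\mathcal{N}_8|=146$, $|\mathcal{N}_{10}|=66$ and $|\mathcal{N}_{12}|=6$, accounting for $1745$ models with a finite group.

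The main obstacle is the remaining $6909-1745=5164$ models, each of which must be proved to have a genuinely infinite group rather than merely one that has not closed up within the chosen cutoff. I would use the degree-growth criterion: a periodic birational self-map has bounded degree sequence, and the degree sequence $d_k:=\deg\Delta_\theta^{k}$ of any birational surface map is eventually governed by a linear recurrence (Diller--Favre, Bedford--Kim), so an initial segment of strictly increasing degrees already certifies $d_k\to\infty$ and hence that $\Delta_\theta$ has infinite order; it therefore suffices to exhibit, for each of the $5164$ models, a short stretch of strictly growing iterate degrees. As a cross-check — and as a fallback for models where this is delicate to automate — one can use the elliptic-curve picture: for generic $t$ the curve $B_\theta(t;x,y)=1$ has bidegree at most $(2,2)$ by \cref{lem:BF_dependence_on_y} and its horizontal analogue, hence is generically a genus-$1$ curve on which $\Phi_\theta$ and $\Psi_\theta$ act as $P\mapsto\iota\pm P$ and $\Delta_\theta$ acts as translation by a point $P_0$; the group is infinite precisely when $P_0$ is non-torsion, which can be certified by specialising $t$ to a suitable rational value with good reduction and comparing the order of $P_0$ modulo two primes against the torsion bound. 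Extra care is needed for the models where this curve degenerates — is reducible or has geometric genus $0$, so that $\Delta_\theta$ is essentially a M\"obius map — where one instead checks that the multiplier of $\Delta_\theta$ at a fixed point is not a root of unity.

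Finally one records the arithmetic check $1084+443+146+66+6+5164=6909=N_{G'_2}(\mathcal{Q}\cap\mathcal{A})$, so that every model is accounted for. The entire difficulty of upgrading this conjecture to a theorem lies in making the infiniteness argument uniform over several thousand explicit birational maps; the finite-group enumeration itself is mechanical and, granting \cref{conj:all_groups_same}, unambiguous.
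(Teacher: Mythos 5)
The statement is labelled a \emph{conjecture} in the paper, and the paper's basis for it is described in the ``Computational results'' section: compute $B_\theta$ and iterate the generators $\Phi_\theta,\Psi_\theta$ in \textsc{Sage}, impose a time limit of a few seconds, and classify any model whose computation does not terminate as having an infinite group. That is, the paper uses a timeout heuristic and explicitly does not certify the infinite cases rigorously, which is precisely why this is a conjecture rather than a lemma. Your proposal agrees with the paper for the finite-group enumeration (iterating $\Delta_\theta=\Psi_\theta\circ\Phi_\theta$ with a cutoff, checking agreement across the four directions $\theta$ to validate \cref{conj:all_groups_same}, and confirming the arithmetic $1084+443+146+66+6+5164=6909$), so on that side the two approaches are effectively the same. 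Where you diverge is in the treatment of the $5164$ infinite-group models: you propose certifying infiniteness via degree growth of iterates of $\Delta_\theta$ (Diller--Favre, Bedford--Kim) and, where needed, via the genus-one picture of the biquadratic kernel curve $B_\theta(t;x,y)=1$ and non-torsion of the translation point, with the multiplier-at-a-fixed-point test for degenerate cases. This is genuinely more than the paper does --- it is exactly what would be needed to upgrade the conjecture to a theorem --- and your identification of the infiniteness certification as the sole real obstacle is correct. One caveat worth flagging: the coefficients of $B_\theta$ are rational in $t$ (not polynomial, as in the small-step setting), so the kernel curve lives over $\mathbb{Q}(t)$ with potentially bad reduction at more places; the good-reduction specialisation step and the identification of where the curve degenerates would need to be handled carefully and uniformly across several thousand models, but these are engineering issues rather than gaps in the argument.
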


Finally, let $\mathcal{E}_\mathrm{alg}$, $\mathcal{E}_\mathrm{Df}$ and $\mathcal{E}_\mathrm{nonDf}$ respectively be the sets of rules for which $Q_p(t;1,1)$ satisfies an algebraic equation, differential equation (but not an algebraic one), or neither. 

\begin{conjecture}\label{conj:alg_df_nondf}
We have
\begin{align}
    |\mathcal{E}_\mathrm{alg}| &\geq 73 &
    % |\mathcal{E}_\mathrm{Df}| &\geq 818 &
    |\mathcal{E}_\mathrm{nonDf}| &\leq 6018
\end{align}
and of course $|\mathcal{E}_\mathrm{Df}| = 6909-|\mathcal{E}_\mathrm{alg}|-|\mathcal{E}_\mathrm{nonDf}|$.
\end{conjecture}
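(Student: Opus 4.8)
The plan is to prove the two inequalities separately, with a common engine. Since $|\mathcal{Q}| = 6909$, the bound $|\mathcal{E}_{\mathrm{nonDf}}| \le 6018$ is equivalent to exhibiting at least $891$ rules in $\mathcal{Q}$ for which $Q_p(t;1,1)$ is provably D-finite or algebraic, and $|\mathcal{E}_{\mathrm{alg}}| \ge 73$ asks that at least $73$ of these be algebraic. So the task reduces to upgrading the computational observations of \cref{ssec:qp_sample_solution} into rigorous solutions for a sufficient number of models; the two counts then follow by bookkeeping.

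For the D-finite count one proceeds rule by rule through the models in $\mathcal{Q}$ whose group $G_\theta$ is finite, a property verified by an explicit, terminating orbit enumeration (conjecturally there are $1745$ such rules, by \cref{conj:groups}). For each, one attempts the orbit sum of \cref{ssec:qp_sample_solution} via the following steps, each an exact symbolic computation and therefore, once executed, a proof: (i) confirm that the involutions $y \mapsto \Phi_\theta(t;x,y)$ and $x \mapsto \Psi_\theta(t;x,y)$ of \cref{lem:Btheta_symmetries} generate a finite group and list its elements; (ii) confirm that the components of every group element admit a power-series expansion in $t$ with Laurent-polynomial coefficients in $x$ and $y$, so that the substitution into \eqref{eqn:qp_main_func_eqn} is legitimate --- when this fails for a direction $\theta$, pass to one where it succeeds and recover the remaining $Q_\theta$ from the boundary series via $Q_\theta = (L_\theta - D_\theta Q^\downarrow - J_\theta Q^\leftarrow)/(1-B_\theta)$, as was done for the spiral model; (iii) form the linear combination of the substituted equations that annihilates every $Q^\downarrow(\cdot)$ and $Q^\leftarrow(\cdot)$ term, and check that it is not identically zero; (iv) extract the positive part $[x^{>}y^{>}]$, isolating $Q_\theta(x,y)$ as a rational multiple of the positive part of an explicit rational function. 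Since the positive part of a rational function is D-finite (a classical result of Lipshitz on diagonals), $Q_\theta(t;x,y)$ is D-finite, hence so are its specialisation at $x=y=1$ and the sum $Q_p(t;1,1)$. One records the number of rules for which (i)--(iv) all succeed.

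For the algebraic count, the key subclass is formed by the rules where the combination in step (iii) vanishes identically --- the two-step analogue of the Kreweras family --- which one expects to be algebraic, together with a small number of the exotic infinite-group examples noted at the end of \cref{ssec:qp_sample_solution}. For the Kreweras-like rules the route is the \emph{half orbit sum}: substitute a suitable half of the elements of $G_\theta$, combine the equations to leave a single boundary unknown (exploiting the available symmetry $Q^\downarrow(x) = Q^\leftarrow(x)$), take the coefficient $[y^0]$, and then separate the $[x^{>}]$ and $[x^{<}]$ parts after factoring the kernel $1-B_\theta$, which by \cref{lem:BF_dependence_on_y} is quadratic in $y$ and so has the structure the classical kernel factorisation needs. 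This produces an algebraic equation for $Q^\downarrow$, hence for each $Q_\theta$ and for $Q_p(t;1,1)$, which one matches against the guessed equations (for example the quartic in $Q_e$ obtained for the $D_3$-symmetric model). For any remaining algebraic candidates not of this type, one upgrades a guessed algebraic equation $P(t,Z)=0$ to a theorem by checking that its unique power-series root with the correct leading coefficients satisfies the rigorously derived functional equation \eqref{eqn:qp_main_func_eqn} --- equivalently, the provably correct minimal differential equation for $Q_p(t;1,1)$, with matching initial conditions. Counting successes gives $|\mathcal{E}_{\mathrm{alg}}| \ge 73$; since algebraic implies D-finite, these rules also feed the $891$ required above.

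The main obstacle is the half orbit sum itself. As observed in \cref{ssec:qp_sample_solution}, it has not yet been made to work for even one two-step rule: the separation of positive and negative parts in $x$ relies on a delicate factorisation of both the kernel and the right-hand side, and the extra rational factors that distinguish these models from the regular small-step case obstruct the usual manipulations. A secondary difficulty is that the reduction above handles finite-group models cleanly but has nothing to say about the genuinely new phenomenon of a D-finite or algebraic generating function with an \emph{infinite} group --- which does occur, but for which no systematic solution method is known --- so that $73$ and $6018$ can at present only be bounded, not pinned down, and even these bounds need the half-orbit-sum machinery completed. Finally, step (iii) must be checked exhaustively over all finite-group rules, including any periodic or otherwise degenerate kernels, and the failure mode of \cref{ssec:extra_spiral} --- the orbit sum leaving uncancelled boundary terms, which conjecturally signals non-D-finiteness --- must be confirmed to affect at most $854$ of the finite-group rules (those with vanishing orbit sum being recovered by the half orbit sum) before the count $891$ is secure.
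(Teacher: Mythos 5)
This statement is a \emph{conjecture}, not a theorem: the paper offers no proof and explicitly warns that the D-finite/algebraic results behind it are ``almost certainly incomplete.'' What the paper actually does is a computational experiment: for each of the $6909$ non-trivial quarter-plane rules, $500$ terms of $Q_p(t;1,1)$ are generated, the \textsc{Ore Algebra} package is used to \emph{guess} a differential equation and, where one is found, an algebraic one, and the successes are tabulated. The numbers $73$ and $6909-6018=891$ are the counts of successful guesses; the inequalities are one-sided precisely because a failure to guess from $500$ terms does not rule out D-finiteness. No claim of proof is made, which is why the statement carries the label Conjecture.

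Your proposal takes a genuinely different and more ambitious route --- upgrading the guesses to theorems via orbit sums, half orbit sums, and Lipshitz's theorem on positive parts of rational functions --- and you are candid that the programme cannot currently be completed. Your diagnosis of the obstacles is accurate, but worth sharpening. The half orbit sum has not been made to work for a single two-step model, so the algebraic count cannot be secured that way. The infinite-group rules with apparently D-finite or algebraic solutions ($22+30=52$ of them in the paper's own tabulation) are unreachable by any group-based method at all. And even for finite-group rules where every substitution is valid, \cref{ssec:extra_spiral} exhibits a $D_2$ model whose full orbit sum fails to cancel the boundary terms; since the paper establishes no a priori bound on how many finite-group rules fail in this way, the figure $891$ cannot be reached via orbit sums alone. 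So what you have written is a coherent research programme rather than a proof --- which matches the actual state of knowledge, since the paper itself claims nothing stronger.
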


\begin{conjecture}
The combinations of the sets from \cref{conj:groups,conj:alg_df_nondf} are summarised below. For the non-empty intersections, we also give an example rule. For the particular non-D-finite examples illustrated, we used 2000 terms when attempting (and failing) to guess a differential equation instead of the usual 500. For the D-finite examples below, we used 1000 terms when attempting (and failing) to guess an algebraic equation.
\begin{table}[H]
    \centering
    \begin{tabular}{|c|c|c|c|}
    \hline
         & alg (lower bound) & Df (?) & nonDf (upper bound)  \\
         \hline
        $\mathcal{N}_4$ & -- & 659 \hspace{1em} \raisebox{-0.4\height}{\vspace{1ex}\scalebox{0.5}{
\gpfourdf}\vspace{1ex}}  & 425 \hspace{1.5em} \raisebox{-0.4\height}{\vspace{1ex}\scalebox{0.5}{
\gpfournondf}\vspace{1ex}} \\
        \hline
        $\mathcal{N}_6$ & 40 \hspace{1em} \raisebox{-0.4\height}{\vspace{1ex}\scalebox{0.5}{
\gpsixalg}\vspace{1ex}} & 66 \hspace{1.5em} \raisebox{-0.4\height}{\vspace{1ex}\scalebox{0.5}{
\gpsixdf}\vspace{1ex}} & 337 \hspace{1.5em} \raisebox{-0.4\height}{\vspace{1ex}\scalebox{0.5}{
\gpsixnondf}\vspace{1ex}}\\
        \hline
        $\mathcal{N}_8$ & 5 \hspace{1.5em} \raisebox{-0.4\height}{\vspace{1ex}\scalebox{0.5}{
\gpeightalg}\vspace{1ex}} & 59 \hspace{1.5em} \raisebox{-0.4\height}{\vspace{1ex}\scalebox{0.5}{
\gpeightdf}\vspace{1ex}} & 82 \hspace{2em} \raisebox{-0.4\height}{\vspace{1ex}\scalebox{0.5}{
\gpeightnondf}\vspace{1ex}} \\
        \hline
        $\mathcal{N}_{10}$ & 6 \hspace{1.5em} \raisebox{-0.4\height}{\vspace{1ex}\scalebox{0.5}{
\gptenalg}\vspace{1ex}} & 4 \hspace{2em} \raisebox{-0.4\height}{\vspace{1ex}\scalebox{0.5}{
\gptendf}\vspace{1ex}} & 56 \hspace{2em} \raisebox{-0.4\height}{\vspace{1ex}\scalebox{0.5}{
\gptennondf}\vspace{1ex}} \\
        \hline
        $\mathcal{N}_{12}$ & -- & -- & 6 \hspace{2.5em} \raisebox{-0.4\height}{\vspace{1ex}\scalebox{0.5}{
\gptwelvenondf}\vspace{1ex}} \\
        \hline
        $\mathcal{N}_\infty$ & 22 \hspace{1em} \raisebox{-0.4\height}{\vspace{1ex}\scalebox{0.5}{
\gpinfalg}\vspace{1ex}} & 30 \hspace{1.5em} \raisebox{-0.4\height}{\vspace{1ex}\scalebox{0.5}{
\gpinfdf}\vspace{1ex}} & 5112 \hspace{1em} \raisebox{-0.4\height}{\vspace{1ex}\scalebox{0.5}{
\gpinfnondf}\vspace{1ex}} \\
        \hline
    \end{tabular}
\end{table}

\end{conjecture}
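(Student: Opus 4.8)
The plan is to carry out, for each of the 6909 non-isomorphic rules in $\mathcal{Q}$, the two computations described in \cref{conj:groups,conj:alg_df_nondf} and then cross-tabulate the outcomes; I should say at the outset that the non-D-finite entries are inherently beyond the reach of a finite computation, so what follows is a reproducible procedure supporting the table rather than a rigorous proof of every cell.

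First I would generate $\mathcal{Q}$ explicitly as a list of transfer matrices, by enumerating all $4\times4$ matrices over $\{0,1\}$ and filtering for cardinal unboundedness, diagonal unboundedness and non-gluedness using the explicit product conditions in \cref{ssec:qp_nontriv}, then reducing modulo the $G'_2$-action. For each surviving rule and each direction $\theta$ I would compute $B_\theta(t;x,y)$ from \cref{lem:constructing_AF_BF}, solve $B_\theta(x,Y)=B_\theta(x,y)$ and $B_\theta(X,y)=B_\theta(x,y)$ for the non-trivial rational roots $\Phi_\theta$ and $\Psi_\theta$ guaranteed by \cref{lem:Btheta_symmetries}, and then apply the pair of involutions $y\mapsto\Phi_\theta$, $x\mapsto\Psi_\theta$ alternately to the seed $(x,y)$ to build the orbit. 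If the orbit closes, the group is finite and its order is read off; if it does not close within a preset time, it is recorded as infinite. Collating the orders across all rules yields the partition of $\mathcal{Q}$ into the classes $\mathcal{N}_\ell$, and the $\mathcal{N}_4,\dots,\mathcal{N}_{12},\mathcal{N}_\infty$ counts of \cref{conj:groups}. (The finite identifications here are genuinely certified by a terminating computation; only the infiniteness claims rest on the time-out heuristic, which is why this part is stated with such confidence.)

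Next, for each rule I would compute the series of $Q_p(t;1,1)=Q_e+Q_n+Q_w+Q_s$ at $x=y=1$ to at least $500$ terms, not via a closed form for~\eqref{eqn:qp_main_func_eqn} but by directly iterating the step-by-step recursion $\mathbf c_m=\mathbf c_{m-1}\cdot\T$ confined to the first quadrant, i.e.\ tracking the counts $p_{m,a,b}$ with $a,b\ge 0$ and their boundary restrictions; this is elementary, exact, and fast enough for all 6909 models. Feeding each series to the \textsc{Ore Algebra} guessing routines, I would search first for a linear ODE and, when one is found, for an algebraic equation, thereby sorting the rule into $\mathcal{E}_\mathrm{alg}$, $\mathcal{E}_\mathrm{Df}$, or (provisionally) $\mathcal{E}_\mathrm{nonDf}$; cross-tabulating this trichotomy against the group orders produces the table. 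For the example rule displayed in each non-empty cell I would additionally pin it down by recording the guessed equation in the algebraic and D-finite cases, and by exhibiting the failure of ODE-guessing with $2000$ terms (resp.\ of algebraic-guessing with $1000$ terms) in the non-D-finite (resp.\ D-finite) cases, matching the side conditions stated in the conjecture.

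The main obstacle is that the result is not a theorem in the usual sense: guessing a differential equation from finitely many series coefficients is heuristic, a ``D-finite'' verdict could conceal an algebraic equation of larger degree than $1000$ terms can detect, and no finite computation can ever certify ``non-D-finite'' — hence the inequalities in \cref{conj:alg_df_nondf} and the ``(?)'' column. Upgrading individual entries would require rigorous certification: re-deriving a guessed ODE from~\eqref{eqn:qp_main_func_eqn} (feasible in the finite-group, orbit-sum cases, as for spiral walks), verifying that a guessed operator annihilates many more coefficients than were used to find it, or — for the non-D-finite side — adapting the singularity-structure and asymptotic arguments of Kurkova–Raschel and Melczer–Mishna to these $B_\theta$-kernels. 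Doing this for several thousand models is well outside the present scope, so the honest status of the table is: exact for the group-order counts (modulo certifying infiniteness), and conjectural but computationally well-supported for the algebraic/D-finite/non-D-finite breakdown.
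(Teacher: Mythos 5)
Your proposal reproduces the paper's own methodology essentially verbatim: enumerate $\mathcal{Q}$, compute $B_\theta$ and the orbit under $\Phi_\theta,\Psi_\theta$ with a time-out to flag infinite groups, generate $\ge 500$ series coefficients by direct quarter-plane recursion, guess ODEs and algebraic equations with the \textsc{Ore Algebra} package, and cross-tabulate — and you are equally explicit that the non-D-finite verdicts are heuristic and the whole table is therefore conjectural. This matches the paper's stance (it is stated as a conjecture, not a theorem) and its two-step Sage computation described immediately beforehand, so there is nothing to add.
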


\section{Conclusion}\label{sec:conclusion}

We have investigated walks on the square lattice $\mathbb{Z}^2$ obeying \emph{two-step rules}, which govern which consecutive pairs of steps in the four cardinal directions are permitted. There are naturally 65536 such rules, but many are trivial, directed, or isomorphic to another rule; for simplicity we also exclude rules whose series coefficients display strong periodicity. For the full, half and quarter planes we have determined the number of non-isomorphic rules which are in some sense `interesting'.

For enumeration we have obtained recurrence relations for the series coefficients and functional equations satisfied by their generating functions. These equations resemble the equivalent equations for `regular' lattice path models, but with some coefficients now rational functions instead of Laurent polynomials.

Enumeration in the full plane is elementary. In the upper half plane all generating functions can be solved with the kernel method, while the asymptotics depend on the vertical `drift'. In the quarter plane things are, unsurprisingly, much more complicated. We have demonstrated that each model has a symmetry group (actually four, one for each direction) much like regular lattice paths. At least some models can be solved using the `orbit sum' method, leading to a D-finite solution. Series analysis reveals that there are also many models which have algebraic solutions. Surprisingly, and in contrast to regular lattice paths, there appear to be models with finite groups but non-D-finite solutions, as well as models with infinite groups but D-finite or algebraic solutions.

This work is intended to serve as a preliminary exploration
into more varied lattice paths, and in particular quarter plane lattice paths. There are many questions that arise:
\begin{itemize}
    \item Can any models be bijected to regular lattice paths, or some other combinatorial objects?
    \item Can \cref{conj:all_groups_same,conj:groups} be firmly established? Is there a combinatorial or geometric interpretation of the group?
    \item Which other models can be solved with the orbit sum method? Or the half orbit sum method?
    \item How is it that some models with infinite groups have D-finite solutions? Is there some `trick' for their solution?
\end{itemize}

\sloppy
\printbibliography
\fussy

\end{document}